\documentclass[10pt,a4paper]{amsart}
\usepackage[latin1]{inputenc}
\usepackage{amsmath}
\usepackage{amsfonts}
\usepackage{amssymb}
\usepackage{amsthm}
\usepackage[usenames, dvipsnames]{color}
\usepackage{tikz,epic}
\usepackage[all]{xy}
\usepackage{geometry,pgfplots}


\definecolor{liens}{rgb}{1,0,0}
\usepackage[colorlinks=true, linkcolor=blue, 
hyperfootnotes=true,citecolor=blue,urlcolor=black]{hyperref}

\renewcommand{\geq}{\geqslant}
\renewcommand{\leq}{\leqslant}

\newtheorem{theo}{Theorem}
\newtheorem{lemma}[theo]{Lemma}
\newtheorem{propo}[theo]{Proposition}
\newtheorem{coro}[theo]{Corollary}
\newtheorem{assumption}[theo]{Assumption}

\theoremstyle{definition}
\newtheorem{defi}[theo]{Definition}

\theoremstyle{remark}
\newtheorem{rem}[theo]{Remark}
\newtheorem{ex}[theo]{Example}

\def\iup{{\widetilde{\iota}}}

\def\Z{\mathbb{Z}}
\def\C{\mathbb{C}}
\def\R{\mathbb{R}}
\def\Q{\mathbb{Q}}
\def\N{{\mathbb N}}

\def\P1{\mathbb{P}^{1}}

\def\beq{\begin{equation}}
\def\eeq{\end{equation}}

\def\Etproj{\overline{E}_{t}}

\def\P2{\mathbb{P}^{2}}

\def\rx{r_{x}}
\def\ry{r_{y}}


\def\P1{\mathbb{P}^{1}}

\def\mer{\mathcal{M}{\it er}}
\def\PPP{\mathcal{P}}


\begin{document}
\title[On the algebraic nature of weighted quadrant walks]{Differential transcendence \& algebraicity criteria for the series counting weighted quadrant walks}
\author{Thomas Dreyfus}
\address{Institut de Recherche Math\'ematique Avanc\'ee, UMR 7501, Universit\'e de Strasbourg et CNRS, 7 rue Ren\'e Descartes, 67084 Strasbourg, France}
\email{dreyfus@math.unistra.fr}

\author{Kilian Raschel}
\address{Institut Denis Poisson, UMR 7013, Universit\'e de Tours, Universit\'e d'Orl\'eans et CNRS, Parc de Grandmont, 37200 Tours, France}
\email{raschel@math.cnrs.fr}

\keywords{Random walks in the quarter plane, Difference Galois theory, Elliptic functions, Transcendence, Algebraicity}

\thanks{This project has received funding from the European Research Council (ERC) under the European Union's Horizon 2020 research and innovation programme under the Grant Agreement No 648132 and the Grant Agreement No 759702.}

\subjclass[2010]{05A15,30D05,39A06}
\date{\today}

\bibliographystyle{amsalpha}

\begin{abstract}
We consider weighted small step walks in the positive quadrant, and provide algebraicity and differential transcendence results for the underlying generating functions: we prove that depending on the probabilities of allowed steps, certain of the generating functions are algebraic over the field of rational functions, while some others do not satisfy any algebraic differential equation with rational function coefficients. Our techniques involve differential Galois theory for difference equations as well as complex analysis (Weierstrass parameterization of elliptic curves). We also extend to the weighted case many key intermediate results, as a theorem of analytic continuation of the generating functions. 
\end{abstract}

\maketitle
\setcounter{tocdepth}{1}

\sloppy
\section*{Introduction}

Take a walk with small steps in the positive quadrant $\mathbb{Z}_{\geq0}^{2}=\{0,1,2,\ldots\}^2$, that is a succession of points
\begin{equation*}
     P_0, P_1,\ldots ,P_k,
\end{equation*}     
where each $P_n$ lies in the quarter plane, where the moves (or steps) $P_{n+1}-P_{n}$ belong to a finite step set $\mathcal S\subset\{0,\pm 1\}^{2}$ which has been chosen a priori, and the probability to move in the direction $P_{n+1}-P_{n}=(i,j)$ is equal to some weight-parameter $d_{i,j}$, with $\sum_{(i,j)\in\mathcal S}d_{i,j}=1$. The following picture is an example of  such path:\smallskip

\begin{center}
\begin{tikzpicture}[scale=.8, baseline=(current bounding box.center)]
\foreach \x in {0,1,2,3,4,5,6,7,8,9,10}
  \foreach \y in {0,1,2,3,4}
    \fill(\x,\y) circle[radius=0pt];
\draw (0,0)--(10,0);
\draw (0,0)--(0,4);

\draw[->](0,0)--(1,1);
\draw[->](1,1)--(1,0);
\draw[->](1,0)--(0,1);
\draw[->](0,1)--(1,2);
\draw[->](1,2)--(2,1);
\draw[->](2,1)--(2,0);
\draw[->](2,0)--(3,1);
\draw[->](3,1)--(3,0);
\draw[->](3,0)--(4,1);
\draw[->](4,1)--(3,2);
\draw[->](3,2)--(2,3);
\draw[->](2,3)--(2,2);
\draw[->](2,2)--(3,3);
\draw[->](3,3)--(4,2);
\draw[->](4,2)--(4,1);
\draw[->](4,1)--(5,0);
\draw[->](5,0)--(6,1);
\draw[->](6,1)--(6,0);
\draw[->](6,0)--(7,1);
\draw[->](7,1)--(8,0);
\draw[->](8,0)--(9,1);
\draw[->](9,1)--(9,0);
\draw[->](9,0)--(10,1);
\draw[->](10,1)--(9,2);
\draw[->](9,2)--(8,3);
\draw[->](8,3)--(8,2);
\end{tikzpicture}
\quad\quad
$\mathcal S=\left\{\begin{tikzpicture}[scale=.4, baseline=(current bounding box.center)]
\foreach \x in {-1,0,1} \foreach \y in {-1,0,1} \fill(\x,\y) circle[radius=2pt];
\draw[thick,->](0,0)--(-1,1);
\draw[thick,->](0,0)--(1,1);
\draw[thick,->](0,0)--(1,-1);
\draw[thick,->](0,0)--(0,-1);
\end{tikzpicture}\right\}$ \quad $P_0 =(0,0)$
\end{center}

\medskip

 Such objects are very natural both in combinatorics and probability theory: they are interesting for themselves and also because they are strongly related to other discrete structures, see \cite{BMM,DeWa-15} and references therein.

Our main object of investigation is the probability 
\begin{equation}
\label{eq:def_qijk}
     \mathbb P[P_0{\stackrel{k}{\longrightarrow}} (i,j)]  
\end{equation}
that the walk started at $P_0$ be at some generic position $(i,j)$ after the $k$th step, with all intermediate points $P_n$ remaining in the cone. More specifically we shall turn our attention to the generating function (or counting function)
\begin{equation}
\label{eq:GF}
     Q(x,y;t)=\sum_{i,j,k\geq0} \mathbb P[P_0\stackrel{k}{\longrightarrow} (i,j)] x^{i}y^{j}t^{k}.
\end{equation}
We are interesting in classifying the algebraic nature of the above series: to which of the following classes does the function \eqref{eq:GF} belong to:
\begin{multline}
\label{eq:classes}
    \{\text{rational}\}\subset\{\text{algebraic}\}\subset\{\text{holonomic}\}\subset\{\text{differentially algebraic}\} \\ \text{v.s.}\quad \{\text{differentially transcendental}\}?
\end{multline}
Rational and algebraic functions are classical notions. By $Q(x,y;t)$ holonomic (resp.\ differentially algebraic) we mean that all of $x\mapsto Q(x,y;t)$, $y\mapsto Q(x,y;t)$ and $t\mapsto Q(x,y;t)$ satisfy a linear (resp.\ algebraic) differential equation with coefficients in $\C(x)$, $\C(y)$ and $\C(t)$, respectively; see Definition \ref{defi1} for a more precise statement. We say that $Q(x,y;t)$ is differentially transcendental if it is not differentially algebraic.
Our main results give sufficient conditions on the weights $d_{i,j}$ to characterize the algebraic nature \eqref{eq:classes} of the counting function \eqref{eq:GF}.

\subsection*{Motivations to consider models of weighted walks}
In this article we shall go beyond the classical hypothesis consisting in studying unweighted walks, that is walks with $d_{i,j}=1/\vert\mathcal{S}\vert$ for all $(i,j)\in \mathcal{S}$. Indeed, motivations to consider weighted models are multiple: first, they offer a natural framework to generalize the numerous results established for unweighted lattice walks, see our bibliography for a nonexhaustive list of works concerning unweighted quadrant walks, especially \cite{BMM,BRS,BostanKauersTheCompleteGenerating,DHRS,FaRa-10,KurkRasch,KuRa-15,Mi-09,MelcMish,MR09}. Second, some models of unweighted walks in dimension $3$ happen to be, after projection, equivalent to models of 2D weighted walks \cite{BoBMKaMe-16}. Needless to mention that lattice walks in 3D represent a particularly challenging topic, see \cite{BoBMKaMe-16,DuHoWa-16}. Third, these models with weights yield results in probability theory, where the hypothesis to have only uniform probabilities (case of unweighted lattice walks) is too restrictive. Fourth, since there exist infinitely many weighted models (compare with only $79$ unweighted small step models!), case-by-case reasonings should be excluded, and in some sense only intrinsic arguments merge up, like in \cite{FIM,DHRS2}. 

\subsection*{Literature} 
There is a large literature on (mostly unweighted) walks in the quarter plane, focusing on various probabilistic and combinatorial aspects. Two main questions have attracted the attention of the mathematical community: 
\begin{itemize}
     \item finding a closed-form expression for the probability \eqref{eq:def_qijk}, or equivalently for the series \eqref{eq:GF};
     \item characterizing the algebraic nature of the series \eqref{eq:GF}, according to the classes depicted in \eqref{eq:classes}. 
\end{itemize}
The first question, combinatorial in nature, should not put the second one in the shade: knowing the nature of $Q(x,y;t)$ has consequences on the asymptotic behavior of the coefficients, and further allows to apprehend the complexity of these lattice paths problems (to illustrate this fact, let us remind that unconstrained walks are associated with rational generating functions, while walks confined to a half-plane admit algebraic counting functions \cite{BMPe-00}). This is the second question that we shall consider in the present work. 

To summarize the main results obtained so far in the literature, one can say that for unweighted quadrant models, the generating function \eqref{eq:GF} is holonomic (third class of functions in \eqref{eq:classes}) if and only if a certain group of transformations (simply related to the weights, see \eqref{eq:generators_group}) is finite; note that models having a finite group are models to which a generalization of the well-known reflection principle applies. This is a very satisfactory result, as it connects combinatorial aspects to geometric features. Moreover, there are various tools for verifying whether given parameters lead to a finite or infinite group \cite{BMM,FaRa-10,KauersYatchak}.

Going back to the algebraic nature of the counting function, the pioneering result is \cite{FIM} (Chapter 4 of that book), which states that if the group is finite, the function \eqref{eq:GF} is holonomic, and even algebraic provided that some further condition be satisfied. Then Mishna \cite{Mi-09}, Mishna and Rechnitzer \cite{MR09}, Mishna and Melczer \cite{MelcMish} observed that there exist infinite group models such that the series is nonholonomic. Bousquet-M\'elou and Mishna \cite{BMM}, Bostan and Kauers \cite{BostanKauersTheCompleteGenerating} proved that the series is holonomic for all unweighted quadrant models with finite group. The converse statement is shown in \cite{KurkRasch,BRS}: for all infinite group models, the series is nonholonomic. The combination of all these works yields the aforementioned equivalence between finite group and holonomic generating function.

The question of differential algebraicity was approached more recently. Bernardi, Bousquet-M\'elou and the second author of the present paper showed \cite{BBMR16} that despite being nonholonomic, $9$ unweighted quadrant models are differentially algebraic. In \cite{DHRS,DHRS2}, the first author of this paper, Hardouin, Roques and Singer proved that all $47$ remaining infinite group models are differentially transcendental. See \cite{FaRa-10,KauersYatchak,DuHoWa-16} for related studies.

\subsection*{Main results}
The above recap shows how actively the combinatorial community took possession of these quadrant walk problems. It also illustrates that within a relatively small class of problems (only $79$ unweighted different models!), there exists a remarkable variety of behaviors. This certainly explains the vivid interest in this model. 

In this article we bring three main contributions, building on the recent works \cite{FIM,KurkRasch,KuRa-15,DHR,DHRS,DHRS2} and mixing techniques coming from complex analysis and Galois theory. The {\it first contribution} is about the techniques: along the way of proving our other contributions we generalize a certain number of results of \cite{FIM} (stationary probabilistic case) and \cite{KurkRasch} (unweighted combinatorial case). In particular we prove an analytic continuation result of the generating functions to a certain elliptic curve, see Theorem \ref{theo:analytic_continuation}.

The differential Galois results of \cite{DHRS} are applied in the setting of unweighted walks with infinite group, and the only serious obstruction to their extension to the weighted case was to prove the analytic continuation result of Theorem \ref{theo:analytic_continuation}. Accordingly the results of \cite{DHRS} may be extended to the weighted case. Our {\it second contribution} is to provide differential transcendence sufficient conditions for infinite group weighted walks, see Theorems \ref{thm:generic1},~\ref{thm:generic2} and~\ref{thm:generic3}. Those are consequences of a more general result, Proposition \ref{prop:caract_hyperalg}, coming from \cite{DHR}, which is a criteria (i.e., a necessary and sufficient condition) for differential transcendence. The latter is however not totally explicit in terms of the parameters $d_{i,j}$, contrary to the (easily verified) sufficient conditions. Note that the proofs here are omitted since they are similar to \cite{DHRS}.

Our {\it third result} is about models having a finite group. Theorem \ref{theo:alg_crit1} and Corollary~\ref{coro:alg_crit2} show that the generating functions are then holonomic, and even algebraic if and only if a certain quantity vanishes (namely, the alternating sum of the monomial $xy$ under the orbit of the group).

\subsection*{Structure of the paper}
\begin{itemize}
     \item Section \ref{sec:kernel}: statement of the kernel functional equation \eqref{eq:funcequ} satisfied by the generating function, study of the zero set defined by the kernel. Results in that section generalize results in \cite{FIM,KurkRasch}, at several places with new and minimal proofs.
    
     \item Section \ref{sec:uniformization}: elliptic parametrization of the zero set of the kernel and continuation of the generating functions. Results in this section are importantly used in Sections~\ref{sec:transcendence} and \ref{sec:algebraicity}.
     \item Section \ref{sec:transcendence}: statement of Theorems \ref{thm:generic1}, \ref{thm:generic2} and \ref{thm:generic3}, giving sufficient conditions for differential transcendence of the counting series.
     \item Section \ref{sec:algebraicity}: Theorem \ref{theo:alg_crit1} and Corollary \ref{coro:alg_crit2} on algebraicity criteria for the generating functions in the finite group case.
\end{itemize}
\noindent {\bf Acknowledgements.}  We would like to thank Alin Bostan, Charlotte Hardouin, Manuel Kauers, Julien Roques and Michael Singer for discussions and support of this work. We express our deepest gratitude to the anonymous referee, whose numerous remarks and suggestions have led us to greatly improve our work.

\section{Kernel of the walk}
\label{sec:kernel}

\subsection{Functional equation}
Weighted walks with small steps in the quarter plane are sums of steps taken in a step set $\mathcal S$, itself being a subset of 
$\{
\begin{tikzpicture}[scale=0.3]
;
\draw[thick,->](0,0)--(-1,0);
\end{tikzpicture}
,
\begin{tikzpicture}[scale=0.3]
;
\draw[thick,->](0,0)--(-1,1);
\end{tikzpicture}
 ,
 \begin{tikzpicture}[scale=0.3]
;
\draw[thick,->](0,0)--(0,1);
\end{tikzpicture}
 ,\begin{tikzpicture}[scale=0.3]
;
\draw[thick,->](0,0)--(1,1);
\end{tikzpicture},
\begin{tikzpicture}[scale=0.3]
;
\draw[thick,->](0,0)--(1,0);
\end{tikzpicture}
,
\begin{tikzpicture}[scale=0.3]
;
\draw[thick,->](0,0)--(1,-1);
\end{tikzpicture}
,
\begin{tikzpicture}[scale=0.3]
;
\draw[thick,->](0,0)--(0,-1);
\end{tikzpicture}
,
\begin{tikzpicture}[scale=0.3]
;
\draw[thick,->](0,0)--(-1,-1);
\end{tikzpicture}
\}$,
or alternatively
\begin{equation*}
\begin{tikzpicture}[scale=.4, baseline=(current bounding box.center)]
\foreach \x in {-1,0,1} \foreach \y in {-1,0,1} \fill(\x,\y) circle[radius=2pt];
\draw[thick,->](0,0)--(0,-1);
\draw[thick,->](0,0)--(-1,-1);
\draw[thick,->](0,0)--(-1,0);
\draw[thick,->](0,0)--(-1,1);
\draw[thick,->](0,0)--(0,1);
\draw[thick,->](0,0)--(1,0);
\draw[thick,->](0,0)--(1,1);
\draw[thick,->](0,0)--(1,-1);
\end{tikzpicture}\smallskip
\end{equation*}
Steps will be identified with pairs $(i,j)\in\{0,\pm 1\}^{2}\backslash\{(0,0)\}$. 
For $(i,j)\in\{0,\pm 1\}^{2}$, let $d_{i,j}\in  [0,1]$ with $\sum d_{i,j}=1$.
We consider quadrant walks starting from $P_0=(0,0)\in\Z_{\geq 0}^{2}$, which at each time move in the direction $(i,j)$ (resp.\ stay at the same position) with probability $d_{i,j}$ (resp.\ $d_{0,0}$). A walk will be called unweighted if $d_{0,0}=0$ and if in addition all nonzero $d_{i,j}$ take the same value. 

As said in the introduction, we will mainly focus on the probability $\mathbb P[P_0{\stackrel{k}{\longrightarrow}} (i,j)]$ that the walk be at position $(i,j)$ after $k$ steps, starting from $P_0$ and with all intermediate points $P_1,\ldots,P_{k-1}$ in the quarter plane. The corresponding trivariate generating function $Q(x,y;t)$ is defined in~\eqref{eq:GF}. Being the generating function of probabilities, $Q(x,y;t)$ converges for all $(x,y,t)\in \C^{3}$ such that $\vert x\vert,\vert y\vert \leq 1$ and $\vert t\vert < 1$. Note that in several papers, as in \cite{BMM}, it is not assumed that $\sum d_{i,j}=1$. However, after a rescaling of the $t$-variable, we may always reduce to this case.
 
The kernel of the walk is the polynomial defined by 
$
K(x,y;t):=xy (1-t S(x,y)),
$
where $S(x,y)$ denotes the jump polynomial
\begin{equation}
\label{eq:def_S}
S(x,y) =\sum_{(i,j)\in \{0,\pm 1\}^{2}} d_{i,j}x^i y^j
= A_{-1}(x) \frac{1}{y} +A_{0}(x)+ A_{1}(x) y
=  B_{-1}(y) \frac{1}{x} +B_{0}(y)+ B_{1}(y) x,
\end{equation}
and $A_{i}(x) \in x^{-1}\R[x]$, $B_{i}(y) \in y^{-1}\R[y]$. Define further
\begin{equation}
\label{eq:def_F1_F2}
     F^{1}(x;t):= K(x,0;t)Q(x,0;t) \quad\text{and}\quad F^{2}(y;t):= K(0,y;t)Q(0,y;t).
\end{equation}
The following is an adaptation of \cite[Lemma~4]{BMM} to our context; the proof is omitted since it is exactly the same as in \cite{BMM,DHRS2}.
\begin{lemma}
The generating function $Q(x,y;t)$ introduced in \eqref{eq:GF} satisfies the functional equation
\begin{equation} 
\label{eq:funcequ}
     K(x,y;t)Q(x,y;t)=F^{1}(x;t) +F^{2}(y;t)-K(0,0;t) Q(0,0;t)+xy.
\end{equation}
\end{lemma}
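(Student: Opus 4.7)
The plan is to derive the functional equation by a standard step-by-step reasoning on the walk, tracking precisely the terms that must be removed to enforce the quarter-plane constraint. First I would write the one-step recursion for the probabilities \eqref{eq:def_qijk}: for $k\geq 0$ and $(i,j)\in\Z_{\geq 0}^2$,
\begin{equation*}
\mathbb{P}[P_0\stackrel{k+1}{\longrightarrow}(i,j)]=\sum_{(a,b)\in\mathcal{S}}d_{a,b}\,\mathbb{P}[P_0\stackrel{k}{\longrightarrow}(i-a,j-b)],
\end{equation*}
with the convention that probabilities vanish whenever the target lies outside $\Z_{\geq 0}^2$. Summing against $x^i y^j t^{k+1}$ and using the initial condition $\mathbb{P}[P_0\stackrel{0}{\to}(i,j)]=\delta_{(i,j),(0,0)}$ yields the identity
\begin{equation*}
Q(x,y;t)=1+tS(x,y)Q(x,y;t)-\text{(terms crossing the boundary)}.
\end{equation*}

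The bookkeeping of the correction terms is the only delicate point. A step $(a,b)\in\mathcal{S}$ with $a=-1$ is forbidden when applied to a point with $i=0$; collecting these contributions and using the decomposition $S(x,y)=A_{-1}(x)/y+A_0(x)+A_1(x)y$, they assemble into $tA_{-1}(x)Q(x,0;t)/y$. Symmetrically, steps with $b=-1$ applied from the $x$-axis produce $tB_{-1}(y)Q(0,y;t)/x$. These two corrections however both subtract the contribution of the step $(-1,-1)$ from the corner $(0,0)$, so this term must be added back once, giving $+td_{-1,-1}Q(0,0;t)/(xy)$. The resulting identity is
\begin{equation*}
(1-tS(x,y))Q(x,y;t)=1-\frac{tA_{-1}(x)}{y}Q(x,0;t)-\frac{tB_{-1}(y)}{x}Q(0,y;t)+\frac{td_{-1,-1}}{xy}Q(0,0;t).
\end{equation*}

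To conclude, I would multiply by $xy$ so that both sides lie in $\C[x,y][[t]]$, obtaining on the left $K(x,y;t)Q(x,y;t)$ and on the right $xy-txA_{-1}(x)Q(x,0;t)-tyB_{-1}(y)Q(0,y;t)+td_{-1,-1}Q(0,0;t)$. It remains to identify these boundary terms with those in \eqref{eq:funcequ} via the notations \eqref{eq:def_F1_F2}. Expanding $K(x,y;t)=xy-t(xA_{-1}(x)+xyA_0(x)+xy^2A_1(x))$ and setting $y=0$ gives $K(x,0;t)=-txA_{-1}(x)$, so $F^1(x;t)=-txA_{-1}(x)Q(x,0;t)$; symmetrically $F^2(y;t)=-tyB_{-1}(y)Q(0,y;t)$; and specializing further to $x=0$ yields $K(0,0;t)=-td_{-1,-1}$. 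Substituting these three identifications proves the equation exactly as stated.

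The main (minor) obstacle is the inclusion-exclusion at the corner: one must verify that the single step $(-1,-1)$ from $(0,0)$ is the only contribution that appears in both boundary corrections, which is precisely the reason why the compensating term involves $K(0,0;t)Q(0,0;t)$ and not a more complicated boundary series. Everything else is a direct translation between the combinatorial recursion and the closed form \eqref{eq:funcequ}.
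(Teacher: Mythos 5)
Your derivation is correct and follows exactly the standard step-by-step recursion used in [BMM, Lemma~4], which is the proof the paper invokes by reference. One small label slip in the prose: steps with $a=-1$ are forbidden from the $y$-axis ($i'=0$) and assemble into $tB_{-1}(y)Q(0,y;t)/x$ via the decomposition $S=B_{-1}/x+B_0+B_1 x$, while steps with $b=-1$ forbidden from the $x$-axis ($j'=0$) assemble into $tA_{-1}(x)Q(x,0;t)/y$; you have these two attributions interchanged, though since both corrections appear symmetrically in the final identity the conclusion and the subsequent identifications $K(x,0;t)=-txA_{-1}(x)$, $K(0,y;t)=-tyB_{-1}(y)$, $K(0,0;t)=-td_{-1,-1}$ are all correct.
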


\subsection{Nondegenerate walks}
From now, let us fix $t\in (0,1)$.
The kernel curve $\Etproj$ is defined as the zero set in $\P1(\C)^2$ of the homogeneous polynomial
\begin{equation} 
\label{eq:kernelwalk}
     \overline{K}(x_0,x_1,y_0,y_1;t)= x_0x_1y_0y_1 -t \sum_{i,j=0}^2 d_{i-1,j-1} x_0^{i} x_1^{2-i}y_0^j y_1^{2-j},
\end{equation}
namely,
\begin{equation}
\label{eq:expression_Et}
     \Etproj=\{([x_{0}\!:\!x_{1}],[y_{0}\!:\!y_{1}])\in\P1(\C)^2: \overline{K}(x_0,x_1,y_0,y_1;t)=0\}.
\end{equation}
Working in $\P1(\mathbb C)$ rather than on $\mathbb C$ appears to be particularly convenient and allows avoiding tedious discussions (as in particular on the number of branch points, see \cite[Section 2.1]{KurkRasch}).
 To simplify our notation, for $x=[x_{0}\!:\!x_{1}]$ and $y=[y_{0}\!:\!y_{1}]\in \P1(\C)$, we shall alternatively write $\overline{K}(x,y;t)$, $\overline{K}(x,y_0,y_1;t)$ and $\overline{K}(x_0,x_1,y;t)$ instead of $\overline{K}(x_0,x_1,y_0,y_1;t)$.

Following \cite{FIM} we introduce the concept of degenerate model.
\begin{defi}
\label{def:degeneate}
A walk is called degenerate if one of the following assertions holds:
\begin{itemize}
     \item $(x,y)\mapsto K(x,y;t)$ is reducible over $\C[x,y]$;
     \item $(x,y)\mapsto K(x,y;t)$ has not bidegree $(2,2)$, that is $x$- or $y$-degree smaller than or equal to~$1$.
\end{itemize}
\end{defi}
By \eqref{eq:def_S} or \eqref{eq:kernelwalk}, the terms in $x^{2}$ (resp.\ $y^{2}$) in   $K(x,y;t)/t$ form a polynomial of degree at most two in $y$ (resp.\ $x$), which does not depend upon $t$. So $(x,y)\mapsto K(x,y;t)$ has not bidegree $(2,2)$ for one particular $t\in \C^{*}$ if and only if it has not bidegree $(2,2)$ for every value of $t\in \C^{*}$. 

On the other hand,  the polynomial $K(x,y;t)$ might be irreducible for some values of $t\in \C^{*}$ and reducible for other values of $t$, so that the notion of degenerate model a priori depends on $t$. However, we will see in Proposition \ref{prop:singcases} and its proof that a walk is degenerate for a specific value of $t\in (0,1)$ if and only if it is degenerate for every $t\in(0,1)$, so that the values of $t$ yielding a factorization of $K$ are outside of $(0,1)$. Note that by \cite[Proposition 1.2]{DHRS2}, it was already known that the latter values were algebraic over $\Q(d_{i,j})$.\\ 

It is natural to ask whether one can have a geometric understanding of degenerate models, to what Proposition \ref{prop:singcases} below answers. An analogue of it has been proved in \cite[Lemma 2.3.2]{FIM} in the case $t=1$ and in \cite[Proposition 1.2]{DHRS2} when $t$ is transcendental over $\Q(d_{i,j})$. It is noteworthy that the step sets leading to degenerate models are exactly the same if $t$ is a free variable in $(0,1)$ (our results), if $t$ is transcendental over $\Q(d_{i,j})$ (results of \cite{DHRS2}) or if $t$ is fixed to be equal to $1$ (\cite{FIM}).

\begin{propo}\label{prop:singcases}
A walk (or model) is degenerate if and only if at least one of the following holds:
\begin{enumerate}
\item \label{case1} All the weights are $0$ except maybe  $\{d_{1,1},d_{0,0},d_{-1,-1}\}$ or  $\{d_{-1,1},d_{0,0},d_{1,-1}\}$. This corresponds to walks with steps supported in one of the following configurations:
$$\begin{tikzpicture}[scale=.4, baseline=(current bounding box.center)]
\foreach \x in {-1,0,1} \foreach \y in {-1,0,1} \fill(\x,\y) circle[radius=2pt];
\draw[thick,->](0,0)--(-1,-1);
\draw[thick,->](0,0)--(1,1);
\end{tikzpicture}
\qquad\qquad
\begin{tikzpicture}[scale=.4, baseline=(current bounding box.center)]
\foreach \x in {-1,0,1} \foreach \y in {-1,0,1} \fill(\x,\y) circle[radius=2pt];
\draw[thick,->](0,0)--(1,-1);
\draw[thick,->](0,0)--(-1,1);
\end{tikzpicture}
$$
\item \label{case2} There exists $i\in \{- 1,1\}$ such that $d_{i,-1}=d_{i,0}=d_{i,1}=0$. This corresponds to (half-space) walks with steps supported in one of the following configurations:
$$\begin{tikzpicture}[scale=.4, baseline=(current bounding box.center)]
\foreach \x in {-1,0,1} \foreach \y in {-1,0,1} \fill(\x,\y) circle[radius=2pt];
\draw[thick,->](0,0)--(0,-1);
\draw[thick,->](0,0)--(1,-1);
\draw[thick,->](0,0)--(1,0);
\draw[thick,->](0,0)--(1,1);
\draw[thick,->](0,0)--(0,1);
\end{tikzpicture}\qquad\qquad
\begin{tikzpicture}[scale=.4, baseline=(current bounding box.center)]
\foreach \x in {-1,0,1} \foreach \y in {-1,0,1} \fill(\x,\y) circle[radius=2pt];
\draw[thick,->](0,0)--(0,-1);
\draw[thick,->](0,0)--(-1,-1);
\draw[thick,->](0,0)--(-1,0);
\draw[thick,->](0,0)--(-1,1);
\draw[thick,->](0,0)--(0,1);
\end{tikzpicture}
$$
\item \label{case3} There exists $j\in \{-1, 1\}$ such that $d_{-1,j}=d_{0,j}=d_{1,j}=0$. This corresponds to (half-space) walks with steps supported in one of the following configurations:
$$\begin{tikzpicture}[scale=.4, baseline=(current bounding box.center)]
\foreach \x in {-1,0,1} \foreach \y in {-1,0,1} \fill(\x,\y) circle[radius=2pt];
\draw[thick,->](0,0)--(-1,0);
\draw[thick,->](0,0)--(-1,1);
\draw[thick,->](0,0)--(0,1);
\draw[thick,->](0,0)--(1,1);
\draw[thick,->](0,0)--(1,0);
\end{tikzpicture} 
\qquad\qquad
\begin{tikzpicture}[scale=.4, baseline=(current bounding box.center)]
\foreach \x in {-1,0,1} \foreach \y in {-1,0,1} \fill(\x,\y) circle[radius=2pt];
\draw[thick,->](0,0)--(-1,0);
\draw[thick,->](0,0)--(-1,-1);
\draw[thick,->](0,0)--(0,-1);
\draw[thick,->](0,0)--(1,-1);
\draw[thick,->](0,0)--(1,0);
\end{tikzpicture}
$$
\end{enumerate}
\end{propo}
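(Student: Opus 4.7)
The plan is to prove the two implications separately: sufficiency by direct computation, necessity by a case analysis of the possible factorizations of $K$.

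\emph{Sufficiency.} In case (1), first configuration, $xyS(x,y)=d_{-1,-1}+d_{0,0}(xy)+d_{1,1}(xy)^{2}$ is a polynomial in $z=xy$, so $K(x,y;t)$ is a degree-$2$ polynomial in $z$ and factors as $-td_{1,1}(xy-z_{1})(xy-z_{2})$ over $\C$. In the second configuration, $K$ is homogeneous of degree $2$ in $(x,y)$ and splits into two linear forms. In cases (2) and (3), the vanishing of an entire column (resp.\ row) of the weight matrix kills the $x^{2}$- (resp.\ $y^{2}$-) coefficient of $K$, dropping the bidegree below $(2,2)$.

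\emph{Necessity.} Write $K(x,y;t)=A(y)x^{2}+B(y)x+C(y)$ with
\[
A(y)=-t(d_{1,-1}+d_{1,0}y+d_{1,1}y^{2}),\; B(y)=y-t(d_{0,-1}+d_{0,0}y+d_{0,1}y^{2}),\; C(y)=-t(d_{-1,-1}+d_{-1,0}y+d_{-1,1}y^{2}),
\]
and analogously $K=\tilde A(x)y^{2}+\tilde B(x)y+\tilde C(x)$. The $x$-degree of $K$ is less than $2$ iff $A\equiv 0$, which corresponds to case (2) with $i=1$; the vanishing of $C$, $\tilde A$, $\tilde C$ recovers the remaining subcases of (2) and (3). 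One may thus assume $K$ has bidegree $(2,2)$ and is reducible, $K=PQ$ with $P,Q$ nonconstant. If $P\in\C[y]$ (or $\C[x]$ by symmetry), then $P$ divides $\gcd(A,B,C)$; inspecting the constant and $y^{2}$-terms of $A$, $B$, $C$ forces a full column of weights to vanish, contradicting bidegree $(2,2)$. Otherwise $P$ and $Q$ are both of bidegree $(1,1)$, which is equivalent to $\Delta_{x}(K)(y)=B(y)^{2}-4A(y)C(y)$ being a perfect square in $\C[y]$; by symmetry, $\Delta_{y}(K)(x)$ is also a square in $\C[x]$.

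The main obstacle is to show that these two square conditions, combined with $t\in(0,1)$ and the positivity/normalization of the $d_{i,j}$, pin down the configurations of case (1). Rather than directly expanding the discriminants into a cumbersome system of polynomial identities, I would leverage \cite[Proposition 1.2]{DHRS2}, where the identical classification is proved for $t$ transcendental over $\Q(d_{i,j})$. Reducibility of $K$ in $\C[x,y]$ (assuming $A,C,\tilde A,\tilde C\not\equiv 0$) is a Zariski-closed condition on $t\in\C^{*}$, cut out by vanishing of appropriate resultants in the coefficients of the hypothetical factors; the reducibility locus is therefore either finite or all of $\C^{*}$. In case (1) the explicit factorization shows it is all of $\C^{*}$, whereas for a model outside (1), (2), (3), the result of \cite{DHRS2} forces the locus to be a finite set of algebraic numbers over $\Q(d_{i,j})$. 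The delicate remaining step is a real-analytic verification, done through each bidegree decomposition, that positivity of the weights prevents any of these finitely many algebraic values from landing in $(0,1)$; once this is checked, the classification is complete and we recover in particular the claim that degeneracy for some $t\in(0,1)$ is equivalent to degeneracy for every $t\in(0,1)$.
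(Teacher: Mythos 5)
Your sufficiency argument and your reduction of necessity to the reducible, bidegree-$(2,2)$, bivaluation-$(0,0)$ case match the paper. But your necessity argument stops right where the mathematical work begins. You defer to \cite[Proposition 1.2]{DHRS2} for transcendental $t$, observe (correctly) that the reducibility locus in $t$ is either all of $\C^{*}$ or a finite set of algebraic numbers over $\Q(d_{i,j})$, and then announce a ``delicate remaining step'' --- a real-analytic verification that none of these finitely many values lies in $(0,1)$ --- without carrying it out. That step is exactly the new content of this proposition over the transcendental case, so leaving it as ``to be checked'' means the proof has a genuine gap, not merely an omitted routine detail. (Your earlier claim, that $P\in\C[y]$ dividing $\gcd(A,B,C)$ ``forces a full column of weights to vanish'' by ``inspecting the constant and $y^{2}$-terms,'' is also unsupported: you need to know that the common root of $\widetilde B_{1}$, $t\widetilde B_{0}-y$, $\widetilde B_{-1}$ must be $y=0$, which is not visible from degree considerations alone.)

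The paper closes this gap with a different and more elementary mechanism. It does not characterize the bad $t$-locus at all; instead Lemma~\ref{lem:posneg} exploits the positivity of the $d_{i,j}$ and the constraint $t\in(0,1)$ to show that the roots of $\widetilde A_{\pm1},\widetilde B_{\pm1}$ have nonpositive real part while those of $t\widetilde A_0-x$, $t\widetilde B_0-y$ are real and nonnegative, hence any common root is $0$. This one lemma is then used twice, for every $t\in(0,1)$ simultaneously: once to rule out a degree-$0$-in-$y$ (or in $x$) factor, and once to show that each bidegree-$(1,1)$ factor is irreducible. After that, the real-structure portion of the argument in \cite{DHRS2} applies verbatim since it does not use transcendence of $t$. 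If you want to salvage your Zariski-closedness route, you would still need an argument of the strength of Lemma~\ref{lem:posneg} to exclude $(0,1)$ from the bad set, so it is simpler to adopt the paper's direct root-location argument outright.
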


With $A_i$ and $B_i$ defined in \eqref{eq:def_S}, introduce 
\begin{equation*}
     \widetilde{A}_{i}=xA_{i}\in \R[x]\quad \text{and} \quad 
     \widetilde{B}_{i}=yB_{i}\in \R[y].
\end{equation*}
Note that Case \ref{case2} (resp.\ Case \ref{case3}) of Proposition~\ref{prop:singcases} holds if and only if at least one of $\widetilde{B}_{\pm1}$ (resp.\ $\widetilde{A}_{\pm1}$) is identically zero. Before proving Proposition~\ref{prop:singcases}, we state and show an intermediate lemma. 

\begin{lemma}\label{lem:posneg}
Assume that none of $\widetilde{A}_{\pm1},\widetilde{B}_{\pm1}$ is identically zero, and remind that $t\in (0,1)$. Let $i\in\{-1,1\}$ be fixed. The following holds:
\begin{itemize}
     \item If $\widetilde{A}_{i}(x)=t\widetilde{A}_{0}(x)-x=0$, then $x=0$;
     \item If $\widetilde{B}_{i}(y)=t\widetilde{B}_{0}(y)-y=0$, then $y=0$.
\end{itemize}
\end{lemma}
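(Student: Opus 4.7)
The plan is to argue by contradiction: assuming $x_0 \in \C^*$ is a common zero of $\widetilde{A}_i(x)$ and $t\widetilde{A}_0(x) - x$, I will derive a contradiction with the hypotheses. By the $(x,\widetilde{A}) \leftrightarrow (y,\widetilde{B})$ symmetry of the kernel, only the $x$-statement need be addressed, and the cases $i=\pm 1$ are identical up to relabeling, so I focus on $i=1$. The two polynomials in view are
\begin{equation*}
\widetilde{A}_1(x) = d_{-1,1} + d_{0,1}x + d_{1,1}x^2, \qquad t\widetilde{A}_0(x) - x = td_{-1,0} + (td_{0,0}-1)x + td_{1,0}x^2,
\end{equation*}
and I split the argument into subcases according to the location of $x_0$ in $\C$.

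First I handle the real subcases by positivity. If $x_0 > 0$, then $\widetilde{A}_1(x_0)$ is a sum of three nonnegative terms, so $\widetilde{A}_1(x_0) = 0$ forces $d_{-1,1} = d_{0,1} = d_{1,1} = 0$, contradicting $\widetilde{A}_1 \not\equiv 0$. If $x_0 < 0$, I divide $t\widetilde{A}_0(x_0) = x_0$ by $x_0$ to obtain
\begin{equation*}
\frac{td_{-1,0}}{x_0} + td_{0,0} + td_{1,0}x_0 = 1,
\end{equation*}
whose left-hand side is bounded above by $td_{0,0} \leq t < 1$, contradicting that it equals $1$. (Only the second relation is needed for this subcase.)

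The main obstacle, which is the heart of the argument, is the case where $x_0 = re^{i\theta}$ is genuinely complex, so $r>0$ and $\sin\theta \neq 0$; here neither of the positivity tricks above applies directly. The idea is to take imaginary parts of the two equations $\widetilde{A}_1(x_0) = 0$ and $t\widetilde{A}_0(x_0) - x_0 = 0$ and divide by the nonzero factor $r\sin\theta$, which yields
\begin{equation*}
d_{0,1} + 2d_{1,1}r\cos\theta = 0 \qquad\text{and}\qquad 2td_{1,0}r\cos\theta = 1 - td_{0,0}.
\end{equation*}
Since $d_{0,0} \leq 1$ and $t < 1$, the right-hand side of the second equation is strictly positive, forcing $d_{1,0} > 0$ and $\cos\theta > 0$. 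Feeding this into the first equation, whose left-hand side is then a sum of two nonnegative terms, forces $d_{0,1} = d_{1,1} = 0$; substituting back into $\widetilde{A}_1(x_0) = 0$ then forces $d_{-1,1} = 0$ as well, again contradicting $\widetilde{A}_1 \not\equiv 0$. This completes the proof strategy.
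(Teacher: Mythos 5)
Your proof is correct, but it is structured differently from the paper's. The paper characterizes the root locations of the two polynomials separately: it shows (via sign checks at $0$ and $1$ and the mean value theorem) that the roots of $t\widetilde{A}_0(x)-x$ are real and nonnegative, and (via the all-nonnegative coefficients of $\widetilde{A}_i$) that the roots of $\widetilde{A}_i$ are real nonpositive or complex with nonpositive real part; intersecting these two constraints leaves only $x=0$. You instead argue directly at a hypothetical nonzero common root $x_0$, splitting into the cases $x_0>0$, $x_0<0$, and $x_0\notin\R$, and you handle the non-real case by extracting the imaginary parts of the two equations and dividing by $r\sin\theta$ --- a clean elementary device that avoids any appeal to the general root-location theory of real quadratics with sign-constrained coefficients. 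Both arguments are short and correct; the paper's version has the side benefit that the stand-alone fact ``$t\widetilde{B}_0(y)-y$ has no negative real root'' is explicitly isolated and is re-used later (in the proof of Lemma \ref{lem:roots_Delta-}), whereas your structure proves that fact only implicitly inside the $x_0<0$ subcase. One small stylistic point: you could note that your positive-real and negative-real subcases are exactly the shadows of the paper's two root-location claims, so the genuinely novel ingredient in your version is the imaginary-part trick for the complex case.
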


\begin{proof}
The proof is elementary: as we shall see the roots of the polynomials $\widetilde{A}_{\pm 1}(x)$ and $\widetilde{B}_{\pm 1}(y)$ have nonpositive real parts, while the roots of the polynomials $t\widetilde{A}_{0}(x)-x$ and $t\widetilde{B}_{0}(y)-y$ are real and nonnegative. If they coincide they must be zero.
 
Let us prove only the first item of Lemma \ref{lem:posneg}, as the other one is similar. Since $t\in (0,1)$, the polynomial $t\widetilde{A}_{0}(x)-x$ is not identically zero. First, let us show that the roots of 
\begin{equation*}
     t\widetilde{A}_{0}(x)-x=d_{-1,0}t+(d_{0,0}t-1)x+d_{1,0}tx^{2}
\end{equation*}
are real and nonnegative. Indeed,
\begin{equation*}
     t\widetilde{A}_{0}(0)\geq 0\quad \text{and}\quad t\widetilde{A}_{0}(1)-1<0,
\end{equation*}
since $t\in (0,1)$ and $d_{-1,0}+d_{0,0}+d_{1,0}\in [0,1]$. If $d_{1,0}>0$ then $\lim_{x\to +\infty}t\widetilde{A}_{0}(x)-x=+\infty$ and the mean value theorem implies that the two roots of the quadratic real polynomial $t\widetilde{A}_{0}(x)-x$ are real nonnegative. If $d_{1,0}=0$, then $\widetilde{A}_{0}(x)$ has degree exactly one and the mean value theorem implies the existence of one real root in $[0,1)$.   

 On the contrary, the coefficients of the quadratic polynomial $\widetilde{A}_{i}(x)$ are all positive, and thus its roots are necessarily real nonpositive, or complex conjugate numbers with nonpositive real parts. It shows that a common root of $\widetilde{A}_{i}(x)$ and $t\widetilde{A}_{0}(x)-x$ has to be $x=0$.
\end{proof}

\begin{proof}[Proof of Proposition \ref{prop:singcases}]
The proof extends that of \cite[Proposition 1.2]{DHRS2} in the situation where $t$ is not necessarily transcendental over $\Q(d_{i,j})$. We are going to detail (and generalize) only the parts of the proof in \cite{DHRS2} where the transcendence hypothesis is used.  \par 

First, the proof that Cases (\ref{case1}), (\ref{case2}) and (\ref{case3}) in the statement of Proposition \ref{prop:singcases} correspond to degenerate walks is exactly the same. More precisely:
\begin{itemize}
     \item The first (resp.\ the second) configuration of Case \eqref{case1} corresponds to a kernel which is a univariate polynomial in $xy$ (resp.\ a quadratic form in $(x,y)$), and it may be factorized.
     \item The first (resp.\ second) configuration of Case \eqref{case2} corresponds to a kernel with $x$-valuation (resp.\ $x$-degree) larger than or equal to $1$ (resp.\ smaller than or equal to $1$). Obviously, a kernel with $x$-degree larger than or equal to $1$ may be factorized by $x$, leading indeed to a degenerate case. 
     \item Similarly, the first (resp.\ second) configuration of Case~\eqref{case3} corresponds to a kernel with $y$-valuation (resp.\ $y$-degree) bigger than or equal to $1$ (resp.\ smaller than or equal to $1$).
\end{itemize}
Conversely, let us assume that the walk is degenerate for $t\in (0,1)$, but that we are not in Cases~(\ref{case2}) or (\ref{case3}), and let us prove that we are in Case (\ref{case1}). Our assumption implies that the kernel has bidegree $(2,2)$ and bivaluation $(0,0)$, that is $x$- and $y$-valuation equal to $0$. Therefore, according to the dichotomy of Definition \ref{def:degeneate}, $K(x,y;t)$ is reducible over $\C[x,y]$. Let us write a factorization 
\begin{equation}
\label{eq:factorization_K}
     K(x,y;t)=-f_{1}(x,y)f_{2}(x,y), 
\end{equation}
with nonconstant $f_{1}(x,y),f_{2}(x,y)\in \C[x,y]$. In the proof of \cite[Proposition 1.2]{DHRS2}, it is shown that $f_{1}(x,y)$ and $f_{2}(x,y)$ are irreducible polynomials of bidegree $(1,1)$, and to conclude the authors use the real structure of the kernel. Moreover, the assumption that $t$ is transcendental over $\Q(d_{i,j})$ is only used in the first part of the proof of \cite[Proposition 1.2]{DHRS2}. Thus, to extend their result in our context, we just have to show that for every $t\in (0,1)$: 
\begin{itemize}
     \item $f_{1}$ and $f_{2}$ in \eqref{eq:factorization_K} have bidegree $(1,1)$;
     \item $f_{1}$ and $f_{2}$ are irreducible in the ring $\C[x,y]$. 
\end{itemize}

\noindent\textit{We claim that both $f_{1}$ and $f_{2}$ in \eqref{eq:factorization_K} have bidegree $(1,1)$.} 
 Suppose that $f_{1}$ or $f_{2}$ does not  have bidegree $(1,1)$. Since $K$ is
 of bidegree at most $(2,2)$ then at least one of the $f_i$'s has degree $0$ in $x$ or $y$.
Up to interchange of  $x$ and $y$, and $f_1$ and $f_2$, we assume that $f_{1}(x,y)$ has $y$-degree $0$ and we denote it by $f_{1}(x)$. 

The factorization \eqref{eq:factorization_K} then reduces to ${K(x,y;t)=-f_{1}(x)f_{2}(x,y)}$, and we deduce that $f_{1}(x)$ is a common factor of the nonzero polynomials $\widetilde{A}_{-1}$, $t\widetilde{A}_{0}-x$ and $\widetilde{A}_{1}$ (these polynomials are all nonzero because we are not in Cases (\ref{case2}) and (\ref{case3}) of Proposition~\ref{prop:singcases}). By Lemma~\ref{lem:posneg}, $x=0$ is the only possible common root of the three previous polynomials.  
Hence $d_{-1,-1}=d_{-1,0}=d_{-1,1}=0$, which contradicts that Case \ref{case2} does not hold. Therefore $f_{1}(x)$ has degree $0$, contradicting that $f_{1}(x,y)$ is nonconstant and showing the claim. \\
  \par 
\noindent \textit{We claim that $f_1$ and $f_2$ are irreducible in the ring $\C[x,y]$.} If not, then $f_1(x,y) = (ax-b)(cy-d)$ for some $a,b,c,d \in \C$. Since $f_{1}(x,y)$ has bidegree $(1,1)$, we have $ac \neq 0$. Then it follows from \eqref{eq:def_S} that
\begin{equation*}
     0 = K\Bigl(\frac{b}{a},y;t\Bigr) =\frac{b}{a}y - t\left(\widetilde{A}_{-1}\Bigl(\frac{b}{a}\Bigr) + \widetilde{A}_0\Bigl(\frac{b}{a}\Bigr)y + \widetilde{A}_1\Bigl(\frac{b}{a}\Bigr)y^2\right).
\end{equation*}
In particular we find $\widetilde{A}_1(\frac{b}{a})=t \widetilde{A}_0(\frac{b}{a})-\frac{b}{a}=0$, hence by Lemma \ref{lem:posneg}, $b=0$. This contradicts the fact that $K$ has  {$x$-valuation} $0$. A similar argument shows that $f_2(x,y)$ is irreducible.  
\end{proof}

From now, we assume that the walk is nondegenerate. This discards only one-dimensional problems (resp.\ walks with support included in a half-plane), which are easier to study and whose generating functions are systematically algebraic, as explained in \cite[Section 2.1]{BMM}.

\subsection{Unit circles on the kernel curve}
Due to the natural domain of convergence of the power series $Q(x,y;t)$, the domains of $\Etproj$ where $\vert x\vert=1$ and $\vert y\vert=1$ are particularly interesting; some of their properties are studied in Lemmas \ref{lem3} and \ref{lem:curve_x1}. 

Hereafter we do the convention that the modulus of an element of $\P1(\C)\setminus \{[1\!:\!0]\}$ is the modulus of its corresponding complex number. Furthermore, the point $[1\!:\!0]$ has modulus strictly bigger than that of any other element in $\P1(\C)$.

\begin{lemma}
\label{lem3}
There are no $x,y\in  \mathbb{P}^{1}(\C)$ with $\vert x\vert=\vert y\vert=1$ such that $(x,y)\in \Etproj$.
\end{lemma}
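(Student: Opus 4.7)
The plan is to reduce the problem to the affine part of $\Etproj$ and then apply the triangle inequality to the defining equation of the kernel. The key observation is that the kernel equation on the affine locus $x,y\neq 0$ is equivalent to the single identity $tS(x,y)=1$, and the right-hand side cannot have modulus $1$ on the unit torus because $t<1$ and $S$ is a convex combination of monomials.

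First I would argue that if $(x,y)\in\Etproj$ satisfies $|x|=|y|=1$, then by the convention adopted just before the lemma the point $[1\!:\!0]$ at infinity is excluded (it has modulus strictly bigger than $1$); the value $0\in\C$ obviously has modulus $0\neq 1$; hence both $x$ and $y$ correspond to genuine nonzero complex numbers on the unit circle. In particular, the homogeneous equation $\overline{K}(x_0,x_1,y_0,y_1;t)=0$ can be dehomogenized, and membership in $\Etproj$ is equivalent to $K(x,y;t)=0$. Since $x,y\neq 0$, after dividing by $xy$ this is in turn equivalent to
\begin{equation*}
     1=tS(x,y)=t\sum_{(i,j)\in\{0,\pm 1\}^2}d_{i,j}x^iy^j.
\end{equation*}

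Now I would take moduli and use that the $d_{i,j}$ are nonnegative with $\sum d_{i,j}=1$, together with $|x|=|y|=1$, which forces $|x^iy^j|=1$ for every $(i,j)\in\{0,\pm 1\}^2$. The triangle inequality then yields
\begin{equation*}
     1=|tS(x,y)|\leq t\sum_{(i,j)\in\{0,\pm 1\}^2}d_{i,j}|x|^i|y|^j = t\sum_{(i,j)\in\{0,\pm 1\}^2}d_{i,j}=t.
\end{equation*}
This contradicts the standing assumption $t\in(0,1)$ and concludes the proof.

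There is no real obstacle: the argument is a direct modulus estimate, and the only mildly delicate point is bookkeeping at the points at infinity of $\P1(\C)^2$, which is taken care of by the convention on moduli. Note that nondegeneracy of the model plays no role here; the lemma holds for any weighted step set with $t\in(0,1)$.
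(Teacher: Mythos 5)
Your proof is correct and uses essentially the same argument as the paper: both reduce to the affine identity involving $tS(x,y)$ on the unit torus and apply the triangle inequality with $\sum d_{i,j}=1$ and $t<1$ to reach a contradiction. The only cosmetic difference is that you divide by $xy$ and display the contradiction $1\leq t$, whereas the paper keeps $K(x,y;t)=xy(1-tS(x,y))$ and concludes $K\neq 0$ directly.
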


\begin{proof}
Let $x,y\in \P1(\C)$ with $\vert x\vert =\vert y\vert =1$. They might be identified with the corresponding elements of $\C$. The triangular inequality yields $\vert S(x,y)\vert \leq 1$ (recall that $\sum d_{i,j}=1$) and finally $t\vert S(x,y)\vert <1 $. Since $\vert xy\vert =1$, we deduce the inequality ${K(x,y;t)\neq 0}$.
\end{proof}

Introduce the sets
\begin{equation}
\label{def:Gamma-x_Gamma-y}
     \Gamma_{x}=\Etproj\bigcap \{\vert x\vert = 1 \}\quad \text{and}\quad
     \Gamma_{y}=\Etproj\bigcap \{\vert y\vert = 1 \}.
\end{equation}
Let us see $y\mapsto \overline{K}(x,y;t)$ as a polynomial of degree two, and let (in what follows, $\sqrt {\,\cdot\,}$ denotes a fixed determination of the complex square root)
\begin{equation*}
     Y_{\pm}(x,t)=Y_{\pm}(x)=\dfrac{-tA_{0}(x)+1\pm \sqrt{(tA_{0}(x)-1)^{2}-4t^{2}A_{-1}(x)A_{1}(x)}}{2tA_{1}(x)}
\end{equation*}
be the two roots. 
For all $x\in \P1(\C)$, we have $(x,Y_{\pm}(x))\in \Etproj$. Similarly, let the two roots of $x\mapsto \overline{K}(x,y;t)$ be denoted by 
\begin{equation*}
     X_{\pm}(y,t)=X_{\pm}(y)=\dfrac{-tB_{0}(y)+1\pm \sqrt{(tB_{0}(y)-1)^{2}-4t^{2}B_{-1}(y)B_{1}(y)}}{2tB_{1}(y)}.
\end{equation*} 
For all $y\in \P1(\C)$, we have $(X_{\pm}(y),y)\in \Etproj$.\par 
The following result generalizes \cite[Lemma 2.3.4]{FIM}. It is illustrated on Figure \ref{fig:curves}. 

\begin{lemma}\label{lem:curve_x1}
Assume that the walk is nondegenerate. The set $\Gamma_{x}$ is composed of two disjoint connected (for the classical topology of $\P1(\C)^{2}$) paths: $\Gamma_{x}^{-}$ such that $(x,y)\in \Gamma_{x}^{-}\Rightarrow \vert y\vert <1$, and $\Gamma_{x}^{+}$ such that $(x,y)\in \Gamma_{x}^{+}\Rightarrow \vert y\vert >1$. A symmetric statement holds for $\Gamma_{y}$.
\end{lemma}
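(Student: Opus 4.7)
My plan is to apply Rouché's theorem on the circle $\{\vert y\vert=1\}$, for each fixed $x$ on the unit circle, in order to show that exactly one projective root of $K(x,\cdot;t)$ lies in $\{\vert y\vert<1\}$ and exactly one in $\{\vert y\vert>1\}\subset\P1(\C)$; the connectedness claim will then follow from a short continuity argument in $\P1(\C)^{2}$.

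For the Rouché step, I fix $x\in\C$ with $\vert x\vert=1$ and write $K(x,y;t) = xy - t\cdot xy\,S(x,y)$. On $\{\vert y\vert=1\}$ one has the strict inequality
\[
\vert xy\vert = 1 > t \geq t\vert S(x,y)\vert = \vert t\cdot xy\,S(x,y)\vert,
\]
because $\vert S(x,y)\vert \leq \sum d_{i,j}=1$ by the triangle inequality and $t<1$. Rouché's theorem then gives that $K(x,\cdot;t)$ and the polynomial $y\mapsto xy$ have the same number of zeros inside $\{\vert y\vert<1\}$, namely exactly one (with multiplicity). Moreover, the homogenization $\overline{K}(x,\cdot;t)$ is not identically zero on $\P1(\C)$ for $\vert x\vert=1$, since otherwise $\widetilde{A}_{1}(x)=0$ and $x-t\widetilde{A}_{0}(x)=0$, which forces $x=0$ by Lemma~\ref{lem:posneg}. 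Thus $\overline{K}(x,\cdot;t)$ has exactly two projective roots on $\P1(\C)$ (counted with multiplicity), one of which --- call it $y^{-}(x)$ --- lies in $\{\vert y\vert<1\}$ by the Rouché count, while the other, $y^{+}(x)$, lies in $\{\vert y\vert>1\}\subset\P1(\C)$ by Lemma~\ref{lem3}. The paper's convention that $[1\!:\!0]$ has modulus strictly greater than every complex number places the point at infinity --- which occurs exactly when $\widetilde{A}_{1}(x)=0$ --- into $\{\vert y\vert>1\}$. The distinctness $y^{-}(x)\neq y^{+}(x)$ shows in particular that the projection to the $x$-factor has no branch point on $\{\vert x\vert=1\}$.

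For the continuity step, the uniqueness of $y^{\pm}(x)$ in the disjoint open regions $\{\vert y\vert<1\}$ and $\{\vert y\vert>1\}$ of $\P1(\C)$, together with the continuous dependence of the projective root multiset of $\overline{K}(x,\cdot;t)$ on $x$, yields continuity of the maps $x\mapsto y^{\pm}(x)$ on $\{\vert x\vert=1\}$. Indeed, a convergent subsequence of $y^{-}(x_n)$, extracted by compactness of $\{\vert y\vert\leq 1\}\subset\P1(\C)$, has its limit among the roots of $\overline{K}(x_0,\cdot;t)$, and Lemma~\ref{lem3} together with uniqueness forces this limit to equal $y^{-}(x_0)$. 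Consequently $\Gamma_{x}^{\pm} := \{(x, y^{\pm}(x)) : \vert x\vert=1\}$ are continuous images of the connected compact circle $\{\vert x\vert=1\}$, hence connected; they are disjoint by construction and their union is $\Gamma_{x}$. The statement for $\Gamma_{y}$ follows by swapping the roles of $x$ and $y$.

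I expect the only real delicacy to be the case where $\widetilde{A}_{1}(x)$ vanishes at some $x\in\{\vert x\vert=1\}$: then $K(x,\cdot;t)$ drops to degree $1$ in $y$ and one of its projective roots sits at $[1\!:\!0]$. Phrased in $\C$ this would compromise the continuity step, but the projective framework adopted throughout absorbs it harmlessly, since the Rouché estimate only concerns finite points on $\{\vert y\vert=1\}$ and the continuity of $y^{\pm}$ in $\P1(\C)$ is unaffected at points where one of the roots is at infinity.
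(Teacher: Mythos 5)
Your proof is correct, and it takes a genuinely different route from the paper's. The paper establishes the result by deforming $t$: it computes the Taylor expansions $Y_+(x,t)\sim 1/(tA_1(x))$ and $Y_-(x,t)\sim O(t)$ as $t\to 0$, shows the lemma holds for $t$ near $0$, and then invokes Lemma~\ref{lem3} together with a continuity-in-$t$ argument to propagate the conclusion across all of $(0,1)$. You instead work at a fixed $t$ and apply Rouch\'e's theorem fiberwise in $y$, noticing that the very estimate $t\vert S(x,y)\vert\le t<1$ used to prove Lemma~\ref{lem3} can be promoted into a dominance inequality $\vert txyS(x,y)\vert<\vert xy\vert$ on the unit circle. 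This immediately yields a precise count --- exactly one projective root in $\{\vert y\vert<1\}$ and exactly one in $\{\vert y\vert>1\}$ for each $x$ on the unit circle --- and connectedness then follows from a compactness/uniqueness argument. Your approach has two small advantages: it makes the ``two disjoint connected paths'' structure explicit from the root count rather than implicit in the notation $Y_\pm$, and it sidesteps the paper's unelaborated claim that ``the two paths depend continuously upon $t$'' (your deformation is in $x$ at fixed $t$, and you justify the continuity of $y^\pm(x)$ by a subsequence argument). You also treat cleanly the degree-drop case $\widetilde A_1(x)=0$, where the finite-degree Rouch\'e count and the projective convention for $[1\!:\!0]$ mesh correctly; this is exactly the kind of point that the affine formulation in \cite{FIM} has to handle by hand, and it is good that you flagged it. The paper's deformation-in-$t$ argument, for its part, gives a cleaner picture of how $\Gamma_x^\pm$ degenerate to $\{y=0\}$ and $\{y=\infty\}$ as $t\to 0$, which is conceptually useful elsewhere in the paper; but as a proof of this lemma, your version is arguably tighter.
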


\begin{proof}
By definition
\begin{equation*}
     \Gamma_{x}=\{(x,Y_{-}(x)): \vert x\vert=1\}\bigcup \{(x,Y_{+}(x)): \vert x\vert=1\}.
\end{equation*}  
We need here to consider the $t$-dependency of $Y_{\pm}(x,t)$, which is defined for all $t\in (0,1)$. Note that $A_{1}(x)$ is not identically zero since otherwise the walk would be degenerate (Definition~\ref{def:degeneate}). The Taylor expansion at $t=0$ of $Y_{\pm}(x,t)$ gives
\begin{equation*}
     Y_{+}(x,t)=\frac{1}{tA_{1}(x)}+o(1/t),
\qquad Y_{-}(x,t)=\frac{t}{4}\frac{4A_{-1}(x)A_{1}(x)-A_{0}(x)^{2}}{A_{1}(x)}+o(t),
\end{equation*}     
proving that when $x$ is fixed with $A_{1}(x)\neq 0$, $Y_{+}(x,t)$ goes to infinity and $Y_{-}(x,t)$ goes to $0$ when $t$ goes to $0$. (Note, it may happen that the numerator $4A_{-1}(x)A_{1}(x)-A_{0}(x)^{2}$ be identically zero, but the conclusion remains.) Since the path $\{(x,Y_{\pm}(x,t)): \vert x\vert=1\}$ cannot intersect the unit disk by Lemma \ref{lem3}, we obtain that when $t$ is close to zero, the path $\{(x,Y_{-}(x,t)): \vert x\vert=1\}$ has $y$-coordinates  with modulus strictly smaller than $1$, while $\{(x,Y_{+}(x,t)): \vert x\vert=1\}$ has $y$-coordinates with modulus strictly bigger than $1$. So the result stated in Lemma \ref{lem:curve_x1} is correct for $t$ close to zero. Let us prove the result for an arbitrary $t$. To the contrary, assume that it is not the case. Since the two paths depend continuously upon $t\in (0,1)$, there must exists $t_{0}\in (0,1)$ such that one of the two paths intersect $\overline{E}_{t_{0}}\bigcap \{\vert y\vert = 1 \}$, thereby contradicting Lemma \ref{lem3}.
\end{proof}

\begin{figure}
\includegraphics[scale=0.3]{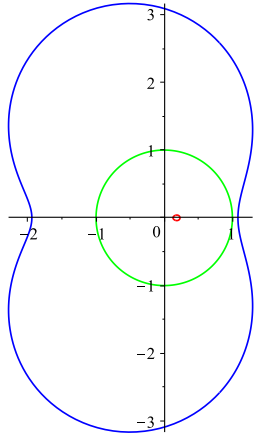}
\caption{The unit circle (in green) and the paths $Y_+(\{\vert x\vert=1\})$ (blue) and $Y_-(\{\vert x\vert=1\})$ (red), for the model with jumps $d_{-1,1}=\frac{1}{2}$, $d_{-1,0}=d_{0,-1}=d_{1,1}=1/6$ (see on the left of Figure \ref{fig:illustration_theorems_infinite}) and $t=0.96$}
\label{fig:curves}
\end{figure}

\subsection{Discriminants and genus of the walk}\label{secdisc}

Remind that we have assumed that the walk is not degenerate. Let us define the genus of the walk as the genus of the algebraic curve $\Etproj$. As we will see, the genus may be equal to zero or one, but we will only discuss nondegenerate walks of genus one, the genus zero case being considered in \cite{Mi-09,MR09,MelcMish,DHRS2}. Moreover, when $t$ is transcendental over $\Q(d_{i,j})$, every nondegenerate weighted walk of genus zero has a differentially transcendental generating function \cite{DHRS2}. 

\medskip

For $[x_0\!:\!x_1],[y_0\!:\!y_1]\in\P1(\C)$, denote by $\Delta^x_{[x_0:x_1]}$ and $\Delta^y_{[y_0:y_1]}$ (or $\Delta^x_{[x_0:x_1]}(t)$ and $\Delta^y_{[y_0: y_1]}(t)$ if it is convenient to emphasize the $t$-dependency) the discriminants of the second degree homogeneous polynomials $y \mapsto \overline{K}(x_0,x_1,y;t)$ and  $x \mapsto \overline{K}(x,y_0,y_1;t)$, respectively, i.e.,
\begin{multline}
\Delta^x_{[x_0:x_1]}=t^2 \Big( \big(d_{-1,0} x_1^2 -  \frac{1}{t} x_0x_1 +d_{0,0}x_0x_1 + d_{1,0}x_0^2\big)^2  \\
 - 4(d_{-1,1} x_1^2 + d_{0,1} x_0x_1 + d_{1,1}x_0^2)(d_{-1,-1} x_1^2 + d_{0,-1} x_0x_1 + d_{1,-1}x_0^2) \Big) \label{eq:expression_branch_points_x}
\end{multline}
and 
\begin{multline}
\Delta^y_{[y_0:y_1]}=t^2 \Big( \big(d_{0,-1} y_1^2 -  \frac{1}{t} y_0y_1 +d_{0,0}y_0y_1+ d_{0,1}y_0^2\big)^2  \\ 
- 4(d_{1,-1} y_1^2 + d_{1,0} y_0y_1 + d_{1,1}y_0^2)(d_{-1,-1} y_1^2 + d_{-1,0} y_0y_1 + d_{-1,1}y_0^2) \Big).\label{eq:expression_branch_points_y}
\end{multline}
The polynomial $\Delta^x_{[x_0:x_1]}$ (resp.\ $\Delta^y_{[y_0:y_1]}$)  is of degree four and so has four roots $a_1, a_2, a_3, a_4$ (resp.\ $b_1, b_2, b_3 , b_4$). We also introduce
\begin{equation}
\label{eq:expression_D_0}
     D(x):=\Delta^x_{[x:1]}=\sum_{j=0}^{4}\alpha_{j}x^{j}\quad \text{and}\quad E(y):=\Delta^y_{[y:1]}=\sum_{j=0}^{4}\beta_{j}y^{j}.
\end{equation}    
See Figure \ref{fig:ordering} for two different plots of $D$ (one when $a_4>0$, another one when $a_4<0$). Plainly, the signs of 
\begin{equation*}
     \alpha_{4}=\big(d_{1,0}^{2}-4d_{1,1}d_{1,-1}\big)t^{2}\quad
     \text{and}\quad \beta_4  =\big(d_{0,1}^{2}-4d_{1,1}d_{-1,1}\big)t^{2}
\end{equation*} 
are independent of $t\in (0,1)$. 
\begin{propo}[{\cite[\S  2.4.1, especially Proposition 2.4.3]{DuistQRT},\cite[Lemma 1.4]{DHRS2}}]\label{prop:genuscurvewalk}
Assume that the walk is not degenerate. The following facts are equivalent:
\begin{enumerate}
\item The algebraic curve $\Etproj$ is a genus one curve with no singularity\footnote{Let us recall the definition of singularities of $\Etproj$ in the affine chart $\C^{2}$. We may extend the notion of singularity in the other affine charts similarly. The point $P\in\Etproj \bigcap \C^{2}$ is a singularity if and only if the following partial derivatives both vanish at $P$:
\begin{equation*}
     \frac{\partial \overline{K}(x,1,y,1;t)}{\partial x}\quad \text{and}\quad \frac{\partial \overline{K}(x,1,y,1;t)}{\partial y}.
\end{equation*}
For instance, if $P=([a\!:\!1],[b\!:\!1])\in \Etproj$ with $ab\neq 0$, then $P$ is a singularity if and only if 
\begin{equation*}
     b -t \sum_{i=1,\,j=0}^2 id_{i-1,j-1} a^{i-1} b^j=a -t \sum_{i=0,\,j=1}^2 jd_{i-1,j-1} a^{i} b^{j-1}=0.
\end{equation*} 
}, i.e., $\Etproj$ is an elliptic curve;
\item The discriminant $\Delta^x_{[x_0:x_1]}$ has simple roots in $\mathbb{P}^{1}(\C)$;
\item The discriminant $\Delta^y_{[y_0: y_1]}$ has simple roots in $\mathbb{P}^{1}(\C)$.
\end{enumerate}
Otherwise, $\Etproj$ is a genus zero algebraic curve with exactly one singularity.
\end{propo}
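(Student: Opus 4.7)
The plan is to exploit the fact that $\Etproj \subset \P1(\C)\times\P1(\C)$ is a bidegree $(2,2)$ curve once the walk is nondegenerate, so that Proposition \ref{prop:singcases} applies. By the adjunction formula on $\P1\times\P1$, the arithmetic genus of such a (reduced, irreducible) curve equals $(2-1)(2-1)=1$. Since the kernel is irreducible over $\C[x,y]$ (nondegenerate case, Definition \ref{def:degeneate}) and of bidegree $(2,2)$, the projective closure $\Etproj$ is an irreducible curve of arithmetic genus $1$. For an irreducible projective curve the geometric genus equals the arithmetic genus minus the sum of the $\delta$-invariants of its singular points. So (1) is equivalent to saying that $\Etproj$ has no singularity at all, and when a singularity exists the geometric genus is $0$ and the total $\delta$-invariant must equal $1$, forcing exactly one singularity of node or cusp type.

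Next I would prove the equivalence (1)$\Leftrightarrow$(2) by relating singularities of $\Etproj$ to double roots of the discriminant $\Delta^x$ of $y\mapsto \overline{K}(x_{0},x_{1},y;t)$. The first projection $\pi_x\colon \Etproj\to \P1(\C)$ is of degree $2$, and a point $([x_0\!:\!x_1],[y_0\!:\!y_1])$ of $\Etproj$ lies over a branch point of $\pi_x$ if and only if $\partial_y \overline{K}=0$ there, i.e.\ if and only if $y$ is a double root of $\overline{K}([x_0\!:\!x_1],\,\cdot\,;t)$, which is equivalent to $\Delta^x_{[x_0:x_1]}=0$. Such a point is moreover a singularity of $\Etproj$ iff $\partial_x\overline{K}$ also vanishes at it; a direct computation (differentiating the relation $\Delta^x_{[x:1]}$ = resultant of $\overline{K}$ and $\partial_y\overline{K}$ with respect to $y$) identifies this second condition with the vanishing of the derivative of $\Delta^x_{[x_0:x_1]}$ at that branch value, i.e.\ with $[x_0\!:\!x_1]$ being a multiple root of $\Delta^x$. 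Hence $\Etproj$ is singular at some point if and only if $\Delta^x$ has a multiple root in $\P1(\C)$, proving (1)$\Leftrightarrow$(2). The equivalence (1)$\Leftrightarrow$(3) is entirely symmetric, using the second projection $\pi_y$ instead of $\pi_x$.

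Finally, for the "otherwise" statement, once a singularity exists the arithmetic genus calculation forces the sum of $\delta$-invariants to equal $1$, which forbids more than one singular point and forces it to be a simple node or a cusp, yielding geometric genus $0$. The main obstacle I anticipate is the careful bookkeeping at the four affine charts of $\P1\times\P1$: the branch points can lie at $[1\!:\!0]$ and the corresponding degree drop of $\Delta^x$ has to be incorporated into "multiple root in $\P1(\C)$". This is handled by working with the homogenized discriminants \eqref{eq:expression_branch_points_x}--\eqref{eq:expression_branch_points_y} throughout, which treats all the affine charts uniformly, and by checking the equivalence between the vanishing of both partial derivatives of $\overline{K}$ and the multiplicity of the root of $\Delta^x$ in each of the affine charts separately. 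Once this is done, the references \cite{DuistQRT} and \cite{DHRS2} already provide the same conclusion in the nearby setting of QRT maps, and adapting their argument suffices.
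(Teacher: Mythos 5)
The paper does not give its own proof of this proposition; it cites \cite[\S 2.4.1, Proposition 2.4.3]{DuistQRT} and \cite[Lemma 1.4]{DHRS2} as the source. Your sketch reconstructs the argument that those references make: the adjunction formula on $\P1\times\P1$ gives arithmetic genus $(2-1)(2-1)=1$ for an irreducible bidegree-$(2,2)$ curve, so $p_g = 1 - \sum_P \delta_P \geq 0$ forces $\sum_P \delta_P \in \{0,1\}$, i.e.\ either smoothness (genus one) or a unique node/cusp (genus zero); and the discriminant characterization is obtained by relating $\partial_x\overline{K}$ at a branch point to $\Delta^{x\prime}$. A quick check confirms the key identity you invoke: writing $\overline{K}(x,1,y,1;t)=a(x)y^2+b(x)y+c(x)$ and $y_* = -b(x_*)/(2a(x_*))$ at a branch value $x_*$, one computes $\Delta^{x\prime}(x_*) = -4\,a(x_*)\,\partial_x\overline{K}(x_*,1,y_*,1;t)$, so if $a(x_*)\neq 0$ the simple root condition is indeed equivalent to nonvanishing of $\partial_x\overline{K}$ at the point above. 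The remaining technical care at $[1\!:\!0]$ (either coordinate) and the degree drop of the affine discriminant, which you flag and propose to handle via the homogenized form \eqref{eq:expression_branch_points_x}--\eqref{eq:expression_branch_points_y}, is exactly the bookkeeping Duistermaat does. In short: correct, and essentially the same approach as the references the paper delegates to.
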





When $t$ is transcendental over $\Q(d_{i,j})$, Lemma \ref{lem:doublezero}  provides a simple, geometric characterization of genus zero and genus one algebraic curves. This result will be generalized for every values of $t\in (0,1)$ in Proposition~\ref{coro1}.

\begin{lemma}[\cite{DHRS2}, Lemma 1.5]\label{lem:doublezero}  
Assume that $t$ is transcendental over $\Q(d_{i,j})$. A walk whose discriminant $\Delta^y_{[y_0: y_1]}$ has double roots is a model whose steps are supported in one of the following configurations:
$$\begin{tikzpicture}[scale=.4, baseline=(current bounding box.center)]
\foreach \x in {-1,0,1} \foreach \y in {-1,0,1} \fill(\x,\y) circle[radius=2pt];
\draw[thick,->](0,0)--(0,1);
\draw[thick,->](0,0)--(1,1);
\draw[thick,->](0,0)--(1,0);
\draw[thick,->](0,0)--(1,-1);
\draw[thick,->](0,0)--(0,-1);
\end{tikzpicture}\quad\quad \begin{tikzpicture}[scale=.4, baseline=(current bounding box.center)]
\foreach \x in {-1,0,1} \foreach \y in {-1,0,1} \fill(\x,\y) circle[radius=2pt];
\draw[thick,->](0,0)--(-1,1);
\draw[thick,->](0,0)--(0,1);
\draw[thick,->](0,0)--(1,1);
\draw[thick,->](0,0)--(1,0);
\draw[thick,->](0,0)--(1,-1);
\end{tikzpicture}\quad\quad
\begin{tikzpicture}[scale=.4, baseline=(current bounding box.center)]
\foreach \x in {-1,0,1} \foreach \y in {-1,0,1} \fill(\x,\y) circle[radius=2pt];
\draw[thick,->](0,0)--(-1,1);
\draw[thick,->](0,0)--(0,1);
\draw[thick,->](0,0)--(1,1);
\draw[thick,->](0,0)--(-1,0);
\draw[thick,->](0,0)--(1,0);
\end{tikzpicture}\quad\quad \begin{tikzpicture}[scale=.4, baseline=(current bounding box.center)]
\foreach \x in {-1,0,1} \foreach \y in {-1,0,1} \fill(\x,\y) circle[radius=2pt];
\draw[thick,->](0,0)--(1,1);
\draw[thick,->](0,0)--(1,0);
\draw[thick,->](0,0)--(-1,-1);
\draw[thick,->](0,0)--(0,-1);
\draw[thick,->](0,0)--(1,-1);
\end{tikzpicture}\quad\quad\begin{tikzpicture}[scale=.4, baseline=(current bounding box.center)]
\foreach \x in {-1,0,1} \foreach \y in {-1,0,1} \fill(\x,\y) circle[radius=2pt];
\draw[thick,->](0,0)--(-1,0);
\draw[thick,->](0,0)--(1,0);
\draw[thick,->](0,0)--(-1,-1);
\draw[thick,->](0,0)--(0,-1);
\draw[thick,->](0,0)--(1,-1);
\end{tikzpicture}\quad\quad\begin{tikzpicture}[scale=.4, baseline=(current bounding box.center)]
\foreach \x in {-1,0,1} \foreach \y in {-1,0,1} \fill(\x,\y) circle[radius=2pt];
\draw[thick,->](0,0)--(-1,1);
\draw[thick,->](0,0)--(1,1);
\draw[thick,->](0,0)--(-1,0);
\draw[thick,->](0,0)--(0,1);
\draw[thick,->](0,0)--(-1,-1);
\end{tikzpicture}\quad\quad\begin{tikzpicture}[scale=.4, baseline=(current bounding box.center)]
\foreach \x in {-1,0,1} \foreach \y in {-1,0,1} \fill(\x,\y) circle[radius=2pt];
\draw[thick,->](0,0)--(-1,1);
\draw[thick,->](0,0)--(0,1);
\draw[thick,->](0,0)--(-1,0);
\draw[thick,->](0,0)--(-1,-1);
\draw[thick,->](0,0)--(0,-1);
\end{tikzpicture}\quad\quad\begin{tikzpicture}[scale=.4, baseline=(current bounding box.center)]
\foreach \x in {-1,0,1} \foreach \y in {-1,0,1} \fill(\x,\y) circle[radius=2pt];
\draw[thick,->](0,0)--(-1,1);
\draw[thick,->](0,0)--(-1,0);
\draw[thick,->](0,0)--(-1,-1);
\draw[thick,->](0,0)--(0,-1);
\draw[thick,->](0,0)--(1,-1);
\end{tikzpicture}$$
Conversely, for any $t\in (0,1)$, if the steps are supported by one of the above configurations, then $\Delta^y_{[y_0: y_1]}$ has a double root.
\end{lemma}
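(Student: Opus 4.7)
\emph{Converse.} For each of the eight configurations I would substitute directly into \eqref{eq:expression_branch_points_y}. The two configurations with no eastward or no westward steps (Case~\ref{case2} of Prop.\,\ref{prop:singcases}) make one of the quadratic factors $d_{\pm1,-1}y_1^2 + d_{\pm1,0}y_0y_1 + d_{\pm1,1}y_0^2$ vanish identically, so $\Delta^y$ becomes a perfect square. The two configurations with no northward or no southward steps (Case~\ref{case3}) force the $y_0^4$, resp.\ $y_1^4$, coefficient of $\Delta^y$ together with the next one to vanish, producing a double root at $[1\!:\!0]$, resp.\ $[0\!:\!1]$. The four ``corner'' configurations (missing one corner step together with its two adjacent laterals) make $E(y) := \Delta^y_{[y:1]}$ factor as $y^2 R(y)$ or its reciprocal form, yielding a double root at $0$ or $\infty$.

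\emph{Forward direction, setup.} Assume $t$ is transcendental over $\Q(d_{i,j})$ and $E(y;t)$ has a double root. Transcendence forces $\mathrm{disc}_y(E) \in \Q[d_{i,j}][t]$ to vanish identically in $t$, so the algebraic set $\{E = \partial_y E = 0\}$ in $(y,t)$-space has an irreducible component projecting dominantly onto the $t$-line. Introducing the shorthand $P(y) = d_{0,-1}+d_{0,0}y+d_{0,1}y^2$ and $Q_\pm(y) = d_{\pm1,-1}+d_{\pm1,0}y+d_{\pm1,1}y^2$, the crucial observation is the compact rewriting
\begin{equation*}
E(y;t) \;=\; \bigl(P(y)^2 - 4Q_+(y)Q_-(y)\bigr)\,t^2 \;-\; 2yP(y)\,t \;+\; y^2,
\end{equation*}
which is only quadratic in $t$, with $t$-discriminant $16 y^2 Q_+(y)Q_-(y)$ and $t$-roots $t_\pm(y) = y/(P(y) \mp 2\sqrt{Q_+(y)Q_-(y)})$.

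\emph{Case analysis on the $y$-coordinate of the dominant component.} If $y=0$ lies on it, then $E(0;t)\equiv 0$ and $\partial_y E(0;t)\equiv 0$ as polynomials in $t$; reading off coefficients gives successively $d_{0,-1}=0$, $d_{-1,-1}d_{1,-1}=0$ and $d_{1,0}d_{-1,-1}+d_{1,-1}d_{-1,0}=0$, and a short case split on which factor vanishes recovers the three configurations missing south, SW corner or SE corner. The case $y=\infty$ follows by the involution $y_0\leftrightarrow y_1$ on $\Delta^y_{[y_0:y_1]}$ and yields the three symmetric configurations. Otherwise the double root $y_0(t)$ varies non-trivially with $t$; then along the dominant component $t = t_\pm(y_0(t))$, and the extra condition $\partial_y E = 0$ reduces (after excluding the codimension-one loci where $P^2-4Q_+Q_-$ vanishes or where $t_\pm'(y) = 0$) to $t_+(y)\equiv t_-(y)$, which forces $Q_+(y)Q_-(y) \equiv 0$ as a polynomial in $y$, i.e.\ $Q_+ \equiv 0$ or $Q_- \equiv 0$: precisely Case~\ref{case2}.

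The main obstacle is this last case: one must carefully rule out ``accidental'' dominant components arising from critical points of the algebraic functions $t_\pm(y)$ or from the identical vanishing of $P^2 - 4Q_+Q_-$. Both scenarios only produce finitely many $t$-values with a double root and so cannot accommodate the transcendental $t$; the only remaining possibility is the identical vanishing $Q_+Q_-\equiv 0$, closing the argument.
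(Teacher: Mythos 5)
The paper does not prove this lemma: it is imported verbatim from \cite[Lemma~1.5]{DHRS2}, with no argument supplied in the present text. So there is no in-paper proof to compare your attempt against; I can only assess it on its own terms.

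Your argument is sound in its main structure, and the central observation --- that $E(y;t)$ is only quadratic in $t$, namely $E=(P^2-4Q_+Q_-)t^2-2yPt+y^2$ with $t$-discriminant $16y^2Q_+Q_-$ --- is correct and does all the work. The converse is a straightforward verification, and your description of it checks out (the half-space configurations give a perfect square or kill the top two/bottom two coefficients, the corner configurations give $y_0^2\mid\Delta^y$ or $y_1^2\mid\Delta^y$). In the forward direction, passing from ``disc vanishes at a transcendental $t$'' to ``disc $\equiv 0$ in $t$'' and then looking at the algebraic set $\{E=\partial_yE=0\}$ is the right move; the vertical components at $y=0$ and $y=\infty$ are handled correctly (and I verified the system $d_{0,-1}=0$, $d_{1,-1}d_{-1,-1}=0$, $d_{1,0}d_{-1,-1}=0$, $d_{1,-1}d_{-1,0}=0$ does decompose, by nonnegativity, into exactly ``no south / missing SW / missing SE''); and for a component with non-constant $y$, the identity $\partial_yE(y,t_\pm(y))=\mp 4y\sqrt{Q_+Q_-}\,t_\pm'(y)$ does force $Q_+Q_-\equiv0$ as you claim.

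The one place that needs tightening is the final paragraph. The claim that identical vanishing of $P^2-4Q_+Q_-$ ``only produces finitely many $t$-values with a double root'' is not true as stated: when $P^2\equiv 4Q_+Q_-$, one has $\Delta^y=y_0y_1\,(y_0y_1-2t\widetilde P)$, and if additionally $d_{0,-1}=0$ or $d_{0,1}=0$ there is a double root for \emph{every} $t$. What saves the argument is that these sub-cases place the repeated root at $0$ or $\infty$ and hence reproduce the conditions from the constant-$y$ components already treated; and if $d_{0,-1}d_{0,1}\neq0$, a double root for all $t$ would force the discriminant of the quadratic factor, $(1-2td_{0,0})^2-16t^2d_{0,1}d_{0,-1}$, to vanish identically, which its constant term $1$ forbids. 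Spelling this out (rather than asserting ``finitely many $t$'') would close the remaining gap; the rest of the proof stands.
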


When the $d_{i,j}$ form a configuration different to those listed in Lemma \ref{lem:doublezero}, it might be possible that for some values of $t$ algebraic over $\Q(d_{i,j})$, $\Etproj$ has genus zero, while for generic values, $\Etproj$ has genus one. Similarly to what occurs in the degenerate/nondegenerate situation (Proposition~\ref{prop:singcases}), we will see that these \textit{critical} values of $t$ are outside the interval $(0,1)$, as shows the following:

\begin{propo}\label{coro1}
The following dichotomy holds:
\begin{itemize}
     \item The walk is nondegenerate and, for all $t\in(0,1)$, $\Etproj$ is an elliptic curve;
     \item The steps are supported in one of the configurations of Lemma \ref{lem:doublezero}.
\end{itemize}
\end{propo}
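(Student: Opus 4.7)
The easy direction follows directly from the converse part of Lemma \ref{lem:doublezero} and Proposition \ref{prop:genuscurvewalk}: if the steps are supported in one of the $8$ configurations, then $\Delta^y_{[y_0:y_1]}(t)$ has a double root for every $t\in(0,1)$, so $\Etproj$ is not an elliptic curve. The substance is the converse: I would assume the steps are not supported in any of the $8$ configurations of Lemma \ref{lem:doublezero}, and prove that the walk is nondegenerate and $\Etproj$ is elliptic for every $t\in(0,1)$.

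Nondegeneracy follows by direct inspection: each of the three cases of Proposition \ref{prop:singcases} describing degenerate step sets sits inside at least one configuration of Lemma \ref{lem:doublezero}. For instance, the diagonal walk $\{(1,1),(-1,-1)\}$ of Case (\ref{case1}) is contained in configuration $4$, and $\{(-1,1),(1,-1)\}$ in configurations $2$ or $8$; the two subcases of Case (\ref{case2}) coincide with configurations $1$ and $7$; those of Case (\ref{case3}) coincide with configurations $3$ and $5$. Contrapositively, our hypothesis excludes degeneracy.

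For ellipticity, by Proposition \ref{prop:genuscurvewalk} it is enough to show that $D(x):=\Delta^x_{[x:1]}(t)$ has simple roots in $\P1(\C)$ for every $t\in(0,1)$. I would argue by contradiction: assume $t_0\in(0,1)$ and $x_0\in\P1(\C)$ satisfy $D(x_0;t_0)=D'(x_0;t_0)=0$, and perform a case analysis on the position of $x_0$, using sign/positivity arguments in the style of Lemma \ref{lem:posneg}, to force the weights into a configuration of Lemma \ref{lem:doublezero}. When $x_0=0$, direct evaluation gives $D(0;t_0)=t_0^2(d_{-1,0}^2-4d_{-1,-1}d_{-1,1})$ and $D'(0;t_0)=-2t_0d_{-1,0}(1-t_0d_{0,0})-4t_0^2(d_{0,-1}d_{-1,1}+d_{-1,-1}d_{0,1})$; the vanishing of both, combined with $d_{i,j}\geq 0$ and $1-t_0d_{0,0}>0$, forces $d_{-1,0}=0$ and $d_{-1,-1}d_{0,1}=d_{0,-1}d_{-1,1}=0$, placing the support in one of configurations $1$, $2$, $4$ or $7$. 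The case $x_0=\infty$ is symmetric and is handled by an analogous analysis of the two leading coefficients $\alpha_4=t_0^2(d_{1,0}^2-4d_{1,1}d_{1,-1})$ and $\alpha_3=-2t_0d_{1,0}(1-t_0d_{0,0})-4t_0^2(d_{0,-1}d_{1,1}+d_{1,-1}d_{0,1})$ of $D$, placing the support in configurations $6$, $7$ or $8$. A non-real double root $x_0$ comes with its complex conjugate $\overline{x_0}$ (since $D$ has real coefficients), so $D=\alpha_4(x-x_0)^2(x-\overline{x_0})^2$; expanding and matching coefficients against the explicit formula for $D$ again constrains the weights into a special configuration.

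The main technical obstacle is the case of a nonzero real $x_0$, which I would handle using the factorization $D=\phi_+\phi_-$ with $\phi_\pm(x)=x-t\widetilde{A}_0(x)\mp 2t\sqrt{\widetilde{A}_{-1}(x)\widetilde{A}_1(x)}$ (well-defined locally on any branch where $\widetilde{A}_{-1}\widetilde{A}_1>0$). Lemma \ref{lem:posneg} ensures $\widetilde{A}_{\pm 1}(x_0)>0$ for $x_0\in\R_{>0}$, whence $\widetilde{A}_{-1}(x_0)\widetilde{A}_1(x_0)>0$ and a local analysis shows that $D(x_0)=D'(x_0)=0$ forces $\phi_\epsilon(x_0)=\phi_\epsilon'(x_0)=0$ for some $\epsilon\in\{\pm\}$; the combination of these two equations with the sign constraints imposed by $t_0\in(0,1)$ and the nonnegativity of the $d_{i,j}$ demands a careful reduction that ultimately pins the support to a configuration of Lemma \ref{lem:doublezero}, contradicting the hypothesis. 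The analogous argument for $x_0\in\R_{<0}$ (where the signs of $\widetilde{A}_{\pm1}(x_0)$ need more care but an identical strategy applies) then completes the proof.
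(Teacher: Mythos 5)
Your easy direction, the nondegeneracy check, and the two boundary cases $x_0=0$ and $x_0=\infty$ are all correct (small slip: the $x_0=0$ constraints give configurations $1$, $2$ or $4$, not $7$; config $7$ arises from the $x_0=\infty$ side, as you do list there). However, you never invoke Theorem \ref{thm:disczeroes}, which is exactly the result the paper uses to dispatch the hard direction in one line; your proposal amounts to re-proving a piece of that theorem by contradiction, and that is where the gaps are.

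The gap is in the three remaining cases: $x_0\in\R_{>0}$, $x_0\in\R_{<0}$, and $x_0\notin\R$. For $x_0\in\R_{>0}$ you correctly reduce to $\phi_\epsilon(x_0)=\phi'_\epsilon(x_0)=0$, but then write only that a ``careful reduction \dots ultimately pins the support to a configuration,'' without exhibiting the mechanism; the relation $\phi'_\epsilon(x_0)=0$ involves $(\widetilde{A}_{-1}\widetilde{A}_1)'/\sqrt{\widetilde{A}_{-1}\widetilde{A}_1}$ and does not, on its face, yield sign constraints that kill off weights. For $x_0\in\R_{<0}$ the branch of $\sqrt{\widetilde{A}_{-1}\widetilde{A}_1}$ may not be real, and you acknowledge but do not resolve this. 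For nonreal $x_0$, ``expanding and matching coefficients'' against $D=\alpha_4(x^2+px+q)^2$ gives four relations on the $\alpha_i$'s that have no visible incompatibility with the nonnegativity of the $d_{i,j}$; the actual contradiction in the paper comes instead from showing that $\phi_+$ and $\phi_-$ each have a real zero, which rules out a perfect-square $D$ outright. In short, your factorization $D=\phi_+\phi_-$ is the same as the paper's $\Delta^+\Delta^-$, but the paper exploits it constructively: the AM--GM bound at $x=\pm1$ (giving $\phi_\pm(1)<0$) together with the boundary limits at $0$ and $\infty$ and the mean value theorem localizes four \emph{distinct} real roots and determines their signs, with a separate Lemma \ref{lem:posneg}-based argument to show $\phi_+$ and $\phi_-$ cannot share a nonzero root. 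Those intermediate-value inequalities are precisely the missing ingredient in your sketch; until they are supplied (or Theorem \ref{thm:disczeroes} is cited directly), the contradiction in the real nonzero and nonreal cases is not established.
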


\begin{rem}
Geometrically, the configurations such that the walk is nondegenerate and $\Etproj$ is an elliptic curve correspond to the situation where there are no three consecutive directions with weight zero, or equivalently, when the step set is not included in any half-space.\end{rem}

In the case $t=1$, it is proved in \cite[Lemma 2.3.10]{FIM} that, besides the models listed in Lemma \ref{lem:doublezero}, any nondegenerate model such that the drift is zero, i.e.,
\begin{equation*}
     \textstyle(\sum_{i}id_{i,j},\sum_{j}j d_{i,j})=(0,0),
\end{equation*}
has a curve $\Etproj$ of genus $0$ (and in particular is not elliptic). See Remark \ref{rem:critical_point} for further related comments.

The heart of the proof of Proposition \ref{coro1} is the following theorem.

\begin{theo}
\label{thm:disczeroes}
Assume that the walk is not in a configuration described in Lemma \ref{lem:doublezero}. Then for all $t\in (0,1)$, the four roots of $\Delta^x_{[x_0:x_1]}(t)$ are in $\mathbb{P}^{1}(\R)$, are distinct, and depend continuously upon $t$. Furthermore, two of them (say $a_{1}(t),a_{2}(t)$) satisfy $-1<a_{1}(t)<  a_{2}(t)<1$ and the other two (namely $a_{3}(t),a_{4}(t)$) satisfy $1< \vert a_{3}(t)\vert,\vert a_{4}(t)\vert$.  Finally, 
\begin{itemize}
     \item $\alpha_{4}>0$ implies that $a_{3}(t),a_{4}(t)$ are both positive;
     \item $\alpha_{4}=0$ implies that one of $a_{3}(t),a_{4}(t)$ is positive and the other one is $[1\!:\!0]$;
     \item $\alpha_{4}<0$ implies that one of $a_{3}(t),a_{4}(t)$ is positive and the other one is negative.
\end{itemize}
The same holds for  $\Delta^y_{[y_0: y_1]}(t)$.
\end{theo}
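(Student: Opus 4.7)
We prove the statement for $\Delta^x_{[x_0:x_1]}$; the case of $\Delta^y_{[y_0:y_1]}$ is entirely symmetric by exchanging the roles of $x$ and $y$. Throughout, let $D(x):=\Delta^x_{[x:1]}(t)$.

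The first step is to eliminate the unit circle from the zero locus of $D$. By Lemma \ref{lem3} and Lemma \ref{lem:curve_x1}, for every $t\in(0,1)$ and every $x\in\mathbb{P}^1(\C)$ with $|x|=1$, the two roots $Y_\pm(x)$ of $y\mapsto \overline{K}(x,y;t)$ satisfy $|Y_-(x)|<1<|Y_+(x)|$ and are therefore distinct; hence $D(x)\neq 0$ on $\{|x|=1\}$. For the real points $x=\pm 1$, the quadratic has real coefficients, so $D(\pm 1)<0$ would force $Y_\pm(\pm 1)$ to be complex conjugate with equal moduli, contradicting Lemma \ref{lem:curve_x1}. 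Thus $D(1),D(-1)>0$.

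The second step locates the roots. Writing $D(x)=(t\widetilde{A}_0(x)-x)^2-4t^2\widetilde{A}_1(x)\widetilde{A}_{-1}(x)$ and performing the substitution $x=s/t$, a direct Puiseux expansion shows that as $t\to 0^+$ the four roots of $D$ behave as
\begin{equation*}
r_{1,2}(t)\sim t\bigl(d_{-1,0}\pm 2\sqrt{d_{-1,-1}\,d_{-1,1}}\bigr),\qquad
r_{3,4}(t)\sim \frac{1}{t(d_{1,0}\pm 2\sqrt{d_{1,1}\,d_{1,-1}})},
\end{equation*}
with the convention that a vanishing denominator in the second pair means a root at $[1\!:\!0]\in\mathbb{P}^1(\C)$. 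Because the weights are nonnegative, all four approximations are real; moreover $r_{1,2}(t)$ lie in the open unit disk and $r_{3,4}(t)$ lie outside. Combined with the first step and the continuous dependence of roots on $t$, this forces two roots to remain strictly inside and two strictly outside the closed unit disk for every $t\in(0,1)$.

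The third and hardest step is to show that no collision of roots occurs for $t\in(0,1)$. The discriminant of $D$ with respect to $x$ is a polynomial $\Phi(t)\in\Q(d_{i,j})[t]$. If $\Phi\equiv 0$, then $D$ would have a double root for every $t$, so by Proposition \ref{prop:genuscurvewalk} the curve $\overline{E}_t$ would be singular for all $t$; by the converse direction of Lemma \ref{lem:doublezero}, this is exactly the case of the listed configurations, excluded by hypothesis. Hence $\Phi$ is a nonzero polynomial in $t$. It then remains to rule out zeros of $\Phi$ inside $(0,1)$: a double root $x_\ast$ at some $t_\ast\in(0,1)$ would produce a singular point $(x_\ast,Y(x_\ast))\in\overline{E}_{t_\ast}$ at which $\partial_x\overline{K}=\partial_y\overline{K}=0$. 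A direct analysis of these simultaneous vanishings, using the probabilistic normalization $d_{i,j}\in[0,1]$, $\sum d_{i,j}=1$ together with Lemma \ref{lem:posneg} (which constrains common zeros of $\widetilde{A}_1$, $\widetilde{A}_{-1}$ and $t\widetilde{A}_0-x$), shows that such a $t_\ast$ forces the same vanishings of the $d_{i,j}$ that describe the configurations of Lemma \ref{lem:doublezero}. This contradicts the hypothesis and proves simplicity of the roots.

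The conclusion is then immediate. Simplicity combined with the implicit function theorem yields continuous (in fact analytic) dependence of the roots on $t\in(0,1)$. Reality is preserved from the small-$t$ asymptotics because a real simple root cannot become non-real without first colliding with another real root. Finally, the three dichotomies on the signs of $a_3(t)$ and $a_4(t)$ are read off from the leading behavior at $t\to 0^+$ recorded in step two and from $\alpha_4=(d_{1,0}^2-4d_{1,1}d_{1,-1})t^2$, whose sign is independent of $t$: if $\alpha_4>0$ both $s$-values $1/(d_{1,0}\pm 2\sqrt{d_{1,1}d_{1,-1}})$ are positive; if $\alpha_4=0$ one of them is infinite, i.e., a root at $[1\!:\!0]$; if $\alpha_4<0$ one is positive and the other negative. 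I expect the truly delicate step to be the third one, the exclusion of double roots in $(0,1)$, which is genuinely case-dependent and relies crucially on the positivity of the weights.
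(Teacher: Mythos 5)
Your overall plan is sound in its architecture (verify the statement at $t\to 0^+$, exclude collisions of roots for $t\in(0,1)$, and transport the conclusion by continuity), but there is a genuine gap at the step that you yourself flag as the ``truly delicate'' one, namely the exclusion of a double root of $D$ at some $t_\ast\in(0,1)$. The paper does \emph{not} rule this out by analysing the simultaneous vanishing of $\overline{K},\partial_x\overline{K},\partial_y\overline{K}$; rather, the engine of its proof is the elementary factorization
\[
\Delta^y_{[y_0:y_1]}(t)=\Delta^+(y_0,y_1,t)\,\Delta^-(y_0,y_1,t),
\qquad
\Delta^{\pm}=t\mathbf{B}_0\pm 2t\sqrt{\widetilde{B}_1\widetilde{B}_{-1}},
\]
obtained as a difference of squares from $\Delta=t^2\mathbf B_0^2-4t^2\widetilde B_1\widetilde B_{-1}$. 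Once the quartic is split this way, the mean value theorem applied separately to $\Delta^+$ and $\Delta^-$ (together with the AM--GM inequality at $y=\pm1$ and the constraint $t\in(0,1)$, $\sum d_{i,j}=1$) locates two distinct zeros of each factor in the intervals you expect, and a short argument (what the paper calls Lemma \ref{lem:Delta+NOTDelta-}, which \emph{is} where Lemma \ref{lem:posneg} actually enters) shows the zeros of $\Delta^+$ and $\Delta^-$ cannot coincide except at the origin. Counting then forces all four roots of the quartic to be simple. Your sketch never arrives at this mechanism: Lemma \ref{lem:posneg} by itself only constrains a common zero of $t\widetilde A_0-x$ and one of $\widetilde A_{\pm 1}$, and a double root of $D$ does not immediately produce such a common zero (one must first separate $\Delta$ into $\Delta^\pm$ to reduce to that situation).

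Moreover, the ``simultaneous vanishing'' route you invoke is genuinely more delicate than the sentence suggests, and is not simply a rerun of the degenerate/nondegenerate analysis. At a singular affine point with $x_\ast y_\ast\neq 0$ one finds $S(x_\ast,y_\ast)=1/t_\ast>1$ and $\partial_x S=\partial_y S=0$; this is precisely what the paper discusses in Remark \ref{rem:critical_point}, where it is observed that such singularities \emph{do} occur for $t\geq 1$ even outside the half-space configurations (and where the identification of the smallest such $t$ is only stated as a conjecture). Thus the statement ``such a $t_\ast$ forces the same vanishings of the $d_{i,j}$ that describe Lemma \ref{lem:doublezero}'' is not a consequence of the degeneracy analysis, and the needed positivity/normalization argument has to deal also with critical points of $S$ off $\R_{>0}^2$. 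You would need to supply this argument, or adopt the paper's factorization approach, for the proof to close.

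The rest of your write-up is fine and agrees with the paper in spirit: $D(\pm 1)>0$ follows from Lemma \ref{lem:curve_x1}; the Puiseux expansions at $t\to 0^+$ are correct; the non-crossing of $0$ and $\infty$ (via the $t$-independent signs of $\alpha_0$ and $\alpha_4$) and of the unit circle do allow one to propagate the $t\to 0^+$ picture once simplicity is known; and the trichotomy on the sign of $\alpha_4$ is read off correctly. But as it stands, the argument is incomplete precisely at its load-bearing step.
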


\begin{rem}
\label{rem:ordering}
As in \cite{FIM} we choose to order the $a_{i}(t)$ in such a way that the cycle of $\mathbb{P}^{1}(\R)$ starting from $-1$ and going to $+\infty$, and then from $-\infty$ to $-1$, crosses the $a_{i}(t)$ in the order $a_{1}(t),a_{2}(t),a_{3}(t),a_{4}(t)$, see Figure \ref{fig:ordering}. We order the $b_{i}(t)$ in the same way.
\end{rem}

With Theorem \ref{thm:disczeroes} as an assumption, we can now provide a proof of Proposition \ref{coro1}.
\begin{proof}[Proof of Proposition \ref{coro1}]
Assume that the model is in a half-space configuration of Lemma \ref{lem:doublezero}. Then by Lemma~\ref{lem:doublezero}, $\Delta^y_{[y_0: y_1]}$ has a double root for every $t\in (0,1)$, and Proposition \ref{prop:genuscurvewalk} implies that  the walk is degenerate, or for every $t\in (0,1)$ the algebraic curve $\Etproj$ is not an elliptic curve.  

Conversely, assume that the walk is not in a configuration of Lemma \ref{lem:doublezero}. Then, the walk is not degenerate by Proposition \ref{prop:singcases}. By Theorem \ref{thm:disczeroes}, for every $t\in (0,1)$ the roots of the discriminant are distinct, and Proposition \ref{prop:genuscurvewalk} implies that for any $t\in (0,1)$ the algebraic curve $\Etproj$ is an elliptic curve. 
\end{proof}

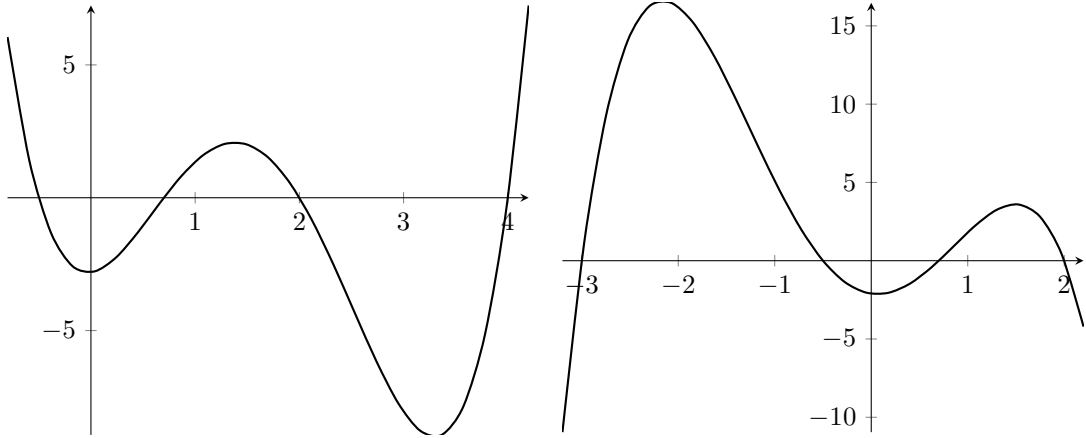
\begin{figure}
\begin{tikzpicture}
  \begin{axis} [axis lines=center]
    \addplot [domain=-0.8:4.2, smooth, thick] { x^4-6.2*x^3+8.85*x^2+.50*x-2.80 };
    \put(0,93){$a_1$}
    \put(47,93){$a_2$}
    \put(110,93){$a_3$}
    \put(176,93){$a_4$}
  \end{axis}
\end{tikzpicture}\quad
\begin{tikzpicture}
  \begin{axis} [axis lines=center]
    \addplot [domain=-3.2:2.2, smooth, thick] { -x^4-.8*x^3+6.55*x^2-.85*x-2.10 };
    \put(10,69){$a_4$}
    \put(79,69){$a_1$}
    \put(132,69){$a_2$}
    \put(174,69){$a_3$}
  \end{axis}
\end{tikzpicture}
\caption{Plot of $D(x)$ defined in \eqref{eq:expression_D_0} and ordering of $a_1,a_2,a_3,a_4$, see Remark~\ref{rem:ordering}. On the left, $a_4>0$ and on the right, $a_4<0$}
\label{fig:ordering}
\end{figure}
The rest of the subsection will be devoted to the proof of Theorem \ref{thm:disczeroes}. Let us do the proof for the discriminant $\Delta^y_{[y_0:y_1]}(t)$, the other case being clearly similar. Expanding the formula \eqref{eq:expression_branch_points_y}, $\Delta^y_{[y_0:y_1]}(t)$ is found to be equal to 
\begin{multline}
\label{eq:expanded_discriminant}
\big(d_{0,1}^{2}-4d_{1,1}d_{-1,1}\big)t^{2}y_{0}^{4}+\big(2t^{2}d_{0,1}d_{0,0}-2td_{0,1}-4t^{2}(d_{1,0}d_{-1,1}+d_{1,1}d_{-1,0})\big)y_{0}^{3}y_{1}\\+\big(1+t^{2}d_{0,0}^{2}+2t^{2}d_{0,-1}d_{0,1}-4t^{2}(d_{1,-1}d_{-1,1}+d_{1,0}d_{-1,0}+d_{1,1}d_{-1,-1})\big)y_{0}^{2} y_{1}^{2}\\
+\big( 2t^{2}d_{0,-1}d_{0,0}-2td_{0,-1}-4t^{2}(d_{1,-1}d_{-1,0}+d_{1,0}d_{-1,-1})\big) y_{0}y_{1}^{3}+
\big( d_{0,-1}^{2}-4d_{1,-1}d_{-1,-1}\big) t^{2}y_{1}^{4}.
\end{multline}
 
 Let us set 
\begin{equation}
\label{eq:def_B}
\left\{\begin{array}{rl}
     \displaystyle \widetilde{B}_{1}(y_{0},y_{1})\hspace{-2.5mm}&\displaystyle= d_{1,-1} y_1^2 + d_{1,0} y_0y_1 + d_{1,1}y_0^2,\\
     \displaystyle\mathbf{B}_{0}(y_{0},y_{1},t)\hspace{-2.5mm}&\displaystyle= d_{0,-1} y_1^2 -  \frac{1}{t} y_0y_1 +d_{0,0}y_0y_1+ d_{0,1}y_0^2,\\
    \displaystyle\widetilde{B}_{-1}(y_{0},y_{1})\hspace{-2.5mm}&\displaystyle=d_{-1,-1} y_1^2 + d_{-1,0} y_0y_1 + d_{-1,1}y_0^2.
\end{array}\right.
\end{equation}
With the above notations, the discriminant may be written 
\begin{align}
    \Delta^y_{[y_0:y_1]}(t)&=t^2 \mathbf{B}_{0}^2(y_{0},y_{1},t)- 4t^2\widetilde{B}_{1}(y_{0},y_{1})\,\widetilde{B}_{-1}(y_{0},y_{1}) \nonumber\\ 
   &=\left(t \mathbf{B}_{0}(y_{0},y_{1},t)+ 2t\sqrt{\widetilde{B}_{1}(y_{0},y_{1})\,\widetilde{B}_{-1}(y_{0},y_{1})}\right)\times \left(t \mathbf{B}_{0}(y_{0},y_{1},t)- 2t\sqrt{\widetilde{B}_{1}(y_{0},y_{1})\,\widetilde{B}_{-1}(y_{0},y_{1})}\right) \nonumber\\
&= \Delta^+(y_0,y_1,t)    \times \Delta^-(y_0,y_1,t),\label{eq:definition_Delta+-}
\end{align}
where the last equality is a definition. Obviously the zeros of the factors $\Delta^{\pm}(y_0,y_1,t)$ of \eqref{eq:definition_Delta+-} are roots of $\Delta^y_{[y_0:y_1]}(t)$. The innocent and trivial factorization \eqref{eq:definition_Delta+-} of the discriminant is the crucial point of the proof of Theorem \ref{thm:disczeroes}.

We are going to split the proof of Theorem \ref{thm:disczeroes} into several lemmas. Our strategy is to show that for every $t\in (0,1)$, $\Delta^{-}(y_0,y_1,t)$ has at least two distinct real zeros (resp.\ $\Delta^{+}(y_0,y_1,t)$ has at least two nonzero distinct real zeros), see Lemmas \ref{lem:roots_Delta+} and \ref{lem:roots_Delta-}, and that the four zeros obtained in this way are all distinct, see Lemma \ref{lem:Delta+NOTDelta-}. Figure \ref{fig:factors_a_4>0} represents an example of plot of $\Delta^-(y,1,t)$ and $\Delta^+(y,1,t)$, illustrating the preceding properties. 
 
\begin{lemma}
\label{lem:roots_Delta+}
Assume that the walk is not in a configuration described in Lemma~\ref{lem:doublezero}. For all $t\in (0,1)$, the factor $\Delta^+(y,1,t)$ has one zero in $(0,1)$ and one zero in $(1,+\infty)$, see Figure~\ref{fig:factors_a_4>0}. 
\end{lemma}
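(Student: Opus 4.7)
The plan is to apply the intermediate value theorem twice to the real-valued continuous function $y \mapsto \Delta^{+}(y,1,t)$ on $[0,+\infty)$, where continuity and realness follow from the observation that $\widetilde{B}_{1}(y,1)$ and $\widetilde{B}_{-1}(y,1)$, being polynomials in $y$ with non-negative coefficients, are themselves non-negative on $[0,+\infty)$, so the square root appearing in the definition of $\Delta^{+}$ is well-defined.

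The critical observation is that $\Delta^{+}(1,1,t) < 0$ for every $t\in(0,1)$. Indeed, substituting $y=1$ and applying the AM--GM inequality $2\sqrt{ab}\leq a+b$ to the product under the square root yields
\begin{equation*}
\Delta^{+}(1,1,t) = t\sum_{j=-1}^{1} d_{0,j} - 1 + 2t\sqrt{\textstyle\bigl(\sum_{j} d_{1,j}\bigr)\bigl(\sum_{j} d_{-1,j}\bigr)} \leq t\sum_{i,j} d_{i,j} - 1 = t-1 < 0.
\end{equation*}
At the other endpoints one computes $\Delta^{+}(0,1,t) = t d_{0,-1} + 2t\sqrt{d_{1,-1} d_{-1,-1}} \geq 0$ and $\Delta^{+}(y,1,t)/y^{2} \to t d_{0,1} + 2t\sqrt{d_{1,1} d_{-1,1}} \geq 0$ as $y \to +\infty$. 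Whenever both of these non-negative quantities are \emph{strictly} positive, the intermediate value theorem applied on $[0,1]$ and on $[1,+\infty)$ immediately produces the two required zeros.

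It remains to treat the exceptional cases where one of the boundary quantities above vanishes. The equality $\Delta^{+}(0,1,t) = 0$ forces $d_{0,-1} = 0$ and $d_{1,-1}d_{-1,-1} = 0$. A short case analysis on which further weights among $d_{1,-1}, d_{-1,-1}, d_{-1,0}, d_{1,0}$ vanish shows that each such pattern either makes the step set a subset of one of the eight configurations of Lemma~\ref{lem:doublezero} (excluded by hypothesis), or else yields an expansion $\widetilde{B}_{1}(y,1)\widetilde{B}_{-1}(y,1) = \alpha y + O(y^{2})$ near $y=0$ with $\alpha > 0$; in the latter situation the non-polynomial term $2t\sqrt{\widetilde{B}_{1}\widetilde{B}_{-1}} \sim 2t\sqrt{\alpha y}$ dominates the $O(y)$ polynomial contribution from $t\mathbf{B}_{0}(y,1,t)$, whence $\Delta^{+}(y,1,t) > 0$ for sufficiently small $y > 0$ and IVT still produces a zero in $(0,1)$. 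A perfectly symmetric argument handles the degenerate behaviour at $+\infty$: in the non-forbidden sub-cases $\widetilde{B}_{1}\widetilde{B}_{-1} \sim \beta y^{3}$ with $\beta > 0$, so $\Delta^{+}(y,1,t) \to +\infty$ and IVT delivers a zero in $(1,+\infty)$. The main obstacle is precisely this final case-by-case bookkeeping, matching each pattern of vanishing $d_{i,j}$'s against the eight configurations of Lemma~\ref{lem:doublezero}; the only substantive algebraic input is the AM--GM estimate at $y=1$.
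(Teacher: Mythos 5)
Your proof is correct and follows essentially the same strategy as the paper's: real-valuedness of $\Delta^{+}$ on $[0,+\infty)$, the AM--GM inequality to get $\Delta^{+}(1,1,t)<0$, positivity near the endpoints (with the degenerate boundary cases handled by noting that the $\sqrt{\widetilde B_1\widetilde B_{-1}}$ term dominates), and two applications of the intermediate value theorem. The paper writes out the short case analysis that you only sketch, but the content is identical.
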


\begin{proof}
Let us first remark that both $\widetilde{B}_{1}(y,1)$ and $\widetilde{B}_{-1}(y,1)$ are nonnegative for ${y\in[0,+\infty)}$, which by \eqref{eq:definition_Delta+-} implies that $\Delta^+( y,1,t)$ and $\Delta^-( y,1,t)$ are real valued on $[0,+\infty)$. We shall now prove the following: for every ${t\in (0,1)}$, there exists $\varepsilon_{t}\in (0,1)$ such that
\begin{align} 
     \Delta^+(\varepsilon_{t},1,t)&>  0,\label{eq:limit_Delta+_0}\\
     \Delta^+(1,1,t)&<0,\label{eq:limit_Delta+_1}\\
     \lim_{y\to +\infty} \Delta^+( y,1,t)&=+\infty.\label{eq:limit_Delta+_inf}
\end{align}
Applying the mean value theorem to $\Delta^+( y,1,t)$ on $[\varepsilon_{t},+\infty)$ readily leads to Lemma~\ref{lem:roots_Delta+}. It thus remains to prove \eqref{eq:limit_Delta+_0}, \eqref{eq:limit_Delta+_1} and \eqref{eq:limit_Delta+_inf}.

The inequality \eqref{eq:limit_Delta+_0} is straightforward when $\Delta^+(0,1,t)=t(d_{0,-1}+2\sqrt{d_{-1,-1}d_{1,-1}})> 0$, since the function $y\mapsto \Delta^+(y,1,t)$ is continuous.

Assume now that ${\Delta^+(0,1,t)=0}$, in other words, that $0$ is a zero of $\Delta^+( y,1,t)$. In this situation, we shall prove the existence of another zero in $(0,1)$. We have ${d_{0,-1}+2\sqrt{d_{-1,-1}d_{1,-1}}=0}$, which implies $d_{0,-1}=d_{-1,-1}d_{1,-1}=0$. Assume that $d_{-1,-1}=0$. Then $d_{-1,0}$ and $d_{1,-1}$ are both nonzero since otherwise the walk would be in a configuration described in Lemma~\ref{lem:doublezero}. Then by \eqref{eq:def_B} and \eqref{eq:definition_Delta+-}, $\Delta^+(y,1,t)$ is equivalent at $y=0$ to $2t\sqrt{d_{-1,0}d_{1,-1}}\times \sqrt{y}$ with $2t\sqrt{d_{-1,0}d_{1,-1}}>0$. For every $t\in (0,1)$, there necessarily exists $\varepsilon_{t}\in (0,1)$ such that $y\mapsto \Delta^+(y,1,t)$ is strictly increasing in $[0,\varepsilon_{t})$. So $\Delta^+(\varepsilon_{t},1,t)> 0$, thereby proving \eqref{eq:limit_Delta+_0}. The case $d_{1,-1}=0$ is similar.

To prove \eqref{eq:limit_Delta+_1} we use the basic inequality of arithmetic and geometric means 
\begin{equation*}
     \sqrt{\widetilde{B}_{1}\,\widetilde{B}_{-1}}\leq \frac{\widetilde{B}_{1}+\widetilde{B}_{-1}}{2},
\end{equation*}
which eventually entails that
\begin{equation*}
     \Delta^+(1,1,t)=t \mathbf{B}_{0}(1,1,t)+ 2t\sqrt{\widetilde{B}_{1}(1,1)\,\widetilde{B}_{-1}(1,1)}\leq t\sum d_{i,j}-1<0.
\end{equation*}

Finally we prove our claim \eqref{eq:limit_Delta+_inf}. It is obvious when $d_{0,1}+2\sqrt{d_{-1,1}d_{1,1}}> 0$, as in this case $\Delta^+(y,1,t)$ is equivalent when $y\to+\infty$ to $t(d_{0,1}+2\sqrt{d_{-1,1}d_{1,1}})y^2$.
Since $d_{0,1}+2\sqrt{d_{-1,1}d_{1,1}}$ is nonnegative, the only nontrivial case is when the latter coefficient is zero. We must have $d_{0,1}=d_{-1,1}d_{1,1}=0$. If $d_{1,1}=0$ then $d_{-1,1}$ and $d_{1,0}$ are both nonzero, since otherwise the walk would be in one of the configurations of Lemma \ref{lem:doublezero}. Then $\widetilde{B}_{1}(y,1)\widetilde{B}_{-1}(y,1)$ has degree three with positive dominant term $d_{-1,1}d_{1,0}$, while $\mathbf{B}_{0}(y,1,t)$ has degree one, and  therefore \eqref{eq:limit_Delta+_inf} holds. The case $d_{-1,1}=0$ may be treated similarly. 
\end{proof}

\begin{figure}
\includegraphics[scale=0.35]{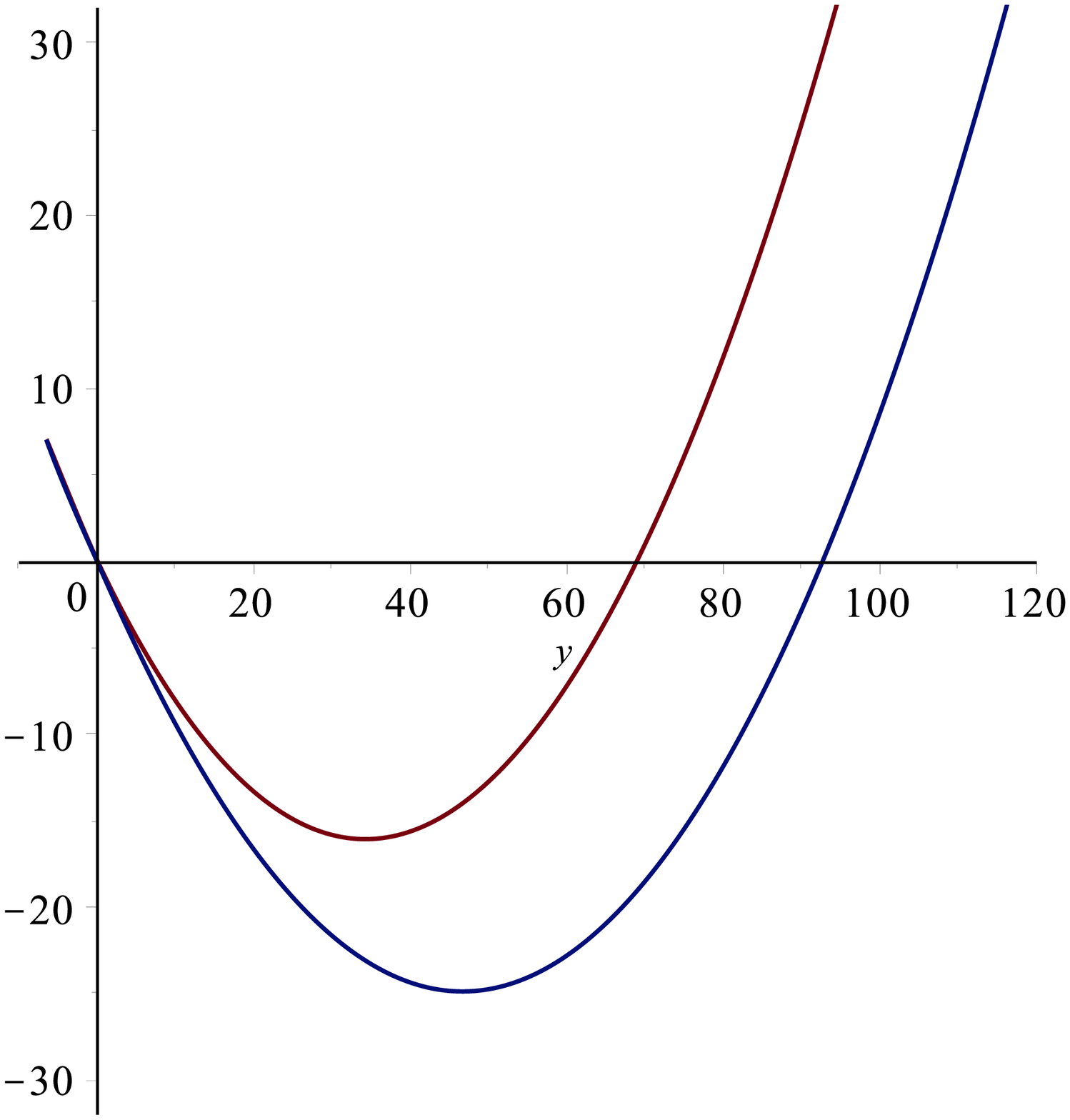}\qquad
\includegraphics[scale=0.35]{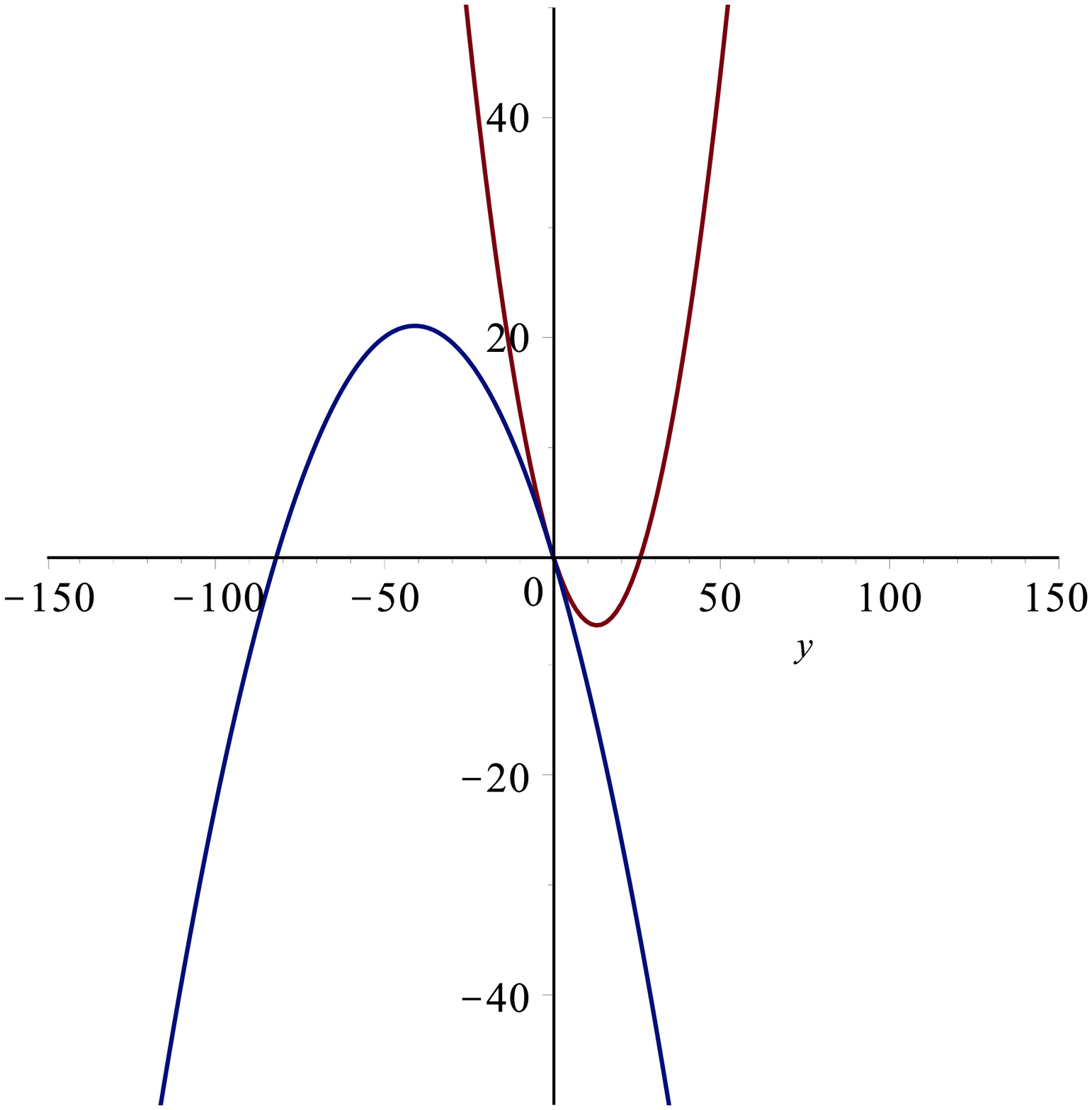}
\caption{Left: plots of $\Delta^-( y,1,t)$ (color red) and $\Delta^+( y,1,t)$ (color blue) in the case $\beta_4>0$. Right: same plots in the case $\beta_4<0$. In all cases, the functions admit one zero in $(-1,1)$ and one zero in $\vert y\vert>1$}
\label{fig:factors_a_4>0}
\end{figure}

\begin{lemma}\label{lem:roots_Delta-}
Assume that the walk is not in a configuration described in Lemma \ref{lem:doublezero}. For all $t\in (0,1)$, the factor $\Delta^-(y,1,t)$ has one zero in $(-1,1)$. Furthermore, 
\begin{itemize}
     \item If $\beta_{4}>0$, $\Delta^-(y,1,t)$ has one zero in $(1,+\infty)$;
     \item If $\beta_{4}=0$, $[1\!:\!0]$ is a zero of $\Delta^-(y_{0},y_{1},t)$;
     \item If $\beta_{4}<0$, $\Delta^-(y,1,t)$ has one zero in $(-\infty,-1)$, see Figure~\ref{fig:factors_a_4>0}. 
\end{itemize}
\end{lemma}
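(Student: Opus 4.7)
My plan is to mimic the proof of Lemma~\ref{lem:roots_Delta+}, applying the intermediate value theorem to the real-valued function $y\mapsto \Delta^-(y,1,t)$ after sign evaluations at $y=\pm 1$ and an asymptotic analysis at infinity. At $y=1$, I expect the bound
\[
\Delta^-(1,1,t)=t(d_{0,-1}+d_{0,0}+d_{0,1})-1-2t\sqrt{(d_{1,-1}+d_{1,0}+d_{1,1})(d_{-1,-1}+d_{-1,0}+d_{-1,1})}\leq t-1<0
\]
to be immediate from $\sum d_{i,j}=1$. At $y=-1$, assuming first that the radicand $\widetilde{B}_1(-1,1)\widetilde{B}_{-1}(-1,1)$ is nonnegative so that $\Delta^-$ is real there, a short case split on the signs of $\widetilde{B}_{\pm 1}(-1,1)$ followed by AM-GM and the normalization $\sum d_{i,j}=1$ should yield the uniform lower bound $\Delta^-(-1,1,t)\geq 1-t>0$.

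For the behavior at infinity, Taylor expansion gives $2t\sqrt{\widetilde{B}_1(y,1)\widetilde{B}_{-1}(y,1)}=2t\sqrt{d_{1,1}d_{-1,1}}\,y^2+O(y)$, so that $\Delta^-(y,1,t)=t(d_{0,1}-2\sqrt{d_{1,1}d_{-1,1}})y^2+O(y)$. The factorization $\beta_4 = t^2(d_{0,1}-2\sqrt{d_{1,1}d_{-1,1}})(d_{0,1}+2\sqrt{d_{1,1}d_{-1,1}})$, whose second factor is nonnegative, shows that the sign of the leading coefficient of $\Delta^-$ coincides with the sign of $\beta_4$---the sole edge case in which both factors vanish being ruled out by the hypothesis that the walk is not in a configuration of Lemma~\ref{lem:doublezero}. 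I then conclude by the intermediate value theorem: the sign change of $\Delta^-$ between $-1$ and $1$ produces the zero in $(-1,1)$; for $\beta_4>0$, combining $\Delta^-(1,1,t)<0$ with $\Delta^-(y,1,t)\to+\infty$ as $y\to+\infty$ yields a zero in $(1,+\infty)$; for $\beta_4<0$, combining $\Delta^-(-1,1,t)>0$ with $\Delta^-(y,1,t)\to-\infty$ as $y\to-\infty$ yields a zero in $(-\infty,-1)$; for $\beta_4=0$, the direct evaluation $\Delta^-(1,0,t)=t(d_{0,1}-2\sqrt{d_{1,1}d_{-1,1}})=0$ exhibits $[1\!:\!0]$ as a zero of the homogeneous form $\Delta^-(y_0,y_1,t)$.

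The main obstacle I anticipate is the case where the radicand $\widetilde{B}_1(y,1)\widetilde{B}_{-1}(y,1)$ becomes negative on part of the interval of interest, so that $\Delta^-$ is not real there. For $y=-1$ this corresponds to $\widetilde{B}_1(-1,1)\widetilde{B}_{-1}(-1,1)<0$; the workaround is to replace $-1$ by the closest point $y_\ast\in(-1,0]$ at which the radicand vanishes, whose existence follows from the intermediate value theorem applied to the radicand (using that all real roots of $\widetilde{B}_{\pm 1}(\cdot,1)$ are nonpositive, as in the proof of Lemma~\ref{lem:roots_Delta+}, and that the radicand is nonnegative at $y=0$). At such a $y_\ast$, $\Delta^-(y_\ast,1,t)=t\mathbf{B}_0(y_\ast,1,t)$ is a sum of nonnegative terms, since $d_{0,0}-1/t<0$ and $y_\ast\leq 0$; this value is either strictly positive (so the sign change argument applies on the subinterval $[y_\ast,1]$ where $\Delta^-$ is real and continuous) or exactly zero (in which case $y_\ast$ itself is the required zero in $(-1,1)$). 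An analogous restriction to the closest subinterval of $(-\infty,-1)$ on which the radicand is nonnegative handles the potential complex-valuedness of $\Delta^-$ when $\beta_4<0$.
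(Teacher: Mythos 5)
Your proposal is correct and follows essentially the same route as the paper: sign evaluations at $y=\pm 1$ (with the same $\mathbf{B}_0$-based bound), asymptotics at infinity governed by the sign of $\beta_4$, the intermediate/mean value theorem, and—crucially—the same workaround for the complexification of $\Delta^-$ by retreating to the nearest point $y_\ast$ (your notation; the paper calls it $z$) where the radicand $\widetilde{B}_1\widetilde{B}_{-1}$ vanishes and observing $\Delta^-(y_\ast,1,t)=t\mathbf{B}_0(y_\ast,1,t)\geq 0$. Minor cosmetic differences (you bound $\Delta^-(1,1,t)\leq t-1$ directly while the paper compares $\Delta^-\leq\Delta^+<0$; your expansion $2t\sqrt{\widetilde{B}_1\widetilde{B}_{-1}}=2t\sqrt{d_{1,1}d_{-1,1}}y^2+O(y)$ should read $O(y^{3/2})$ when $d_{1,1}d_{-1,1}=0$, but this does not affect the conclusion since $\beta_4>0$ then forces $d_{0,1}>0$) do not change the substance.
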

 
\begin{proof}
Though similar to that of Lemma \ref{lem:roots_Delta+}, the proof is more subtle for two reasons: first, different cases may occur according to the sign of $\beta_4$; second, the function $\Delta^-$ is not necessarily real on $(-\infty,0)$ and so one cannot directly apply the mean value theorem, as soon as negative values of $y$ are concerned.

Let us first consider the zero in $(-1,1)$. There are two cases, according to whether the condition below is satisfied or not:
\begin{equation}
\label{eq:cond_z}
     \exists z\in(-1,0]\text{ such that } \widetilde{B}_{1}(z,1) \widetilde{B}_{-1}(z,1)=0.
\end{equation}
We first assume that \eqref{eq:cond_z} is not satisfied. Remind that $\widetilde{B}_{1}$ and $\widetilde{B}_{-1}$ are positive on $[0,1]$. Then they are positive on $[-1,1]$, and $\Delta^-$ is well defined and real on $[-1,1]$. Let us prove the identities
\begin{align} 
     \Delta^-(-1,1,t)&>0,\label{eq:limit_Delta-_-1}\\
     \Delta^-(1,1,t)&<0.\label{eq:limit_Delta-_1}
\end{align}
The inequality of arithmetic and geometric means gives $-\sqrt{\widetilde{B}_{1}\,\widetilde{B}_{-1}}\geq -\frac{\widetilde{B}_{1}+\widetilde{B}_{-1}}{2}$, and finally entails that
\begin{equation*}
     \Delta^-(-1,1,t)=t \mathbf{B}_{0}(-1,1,t)- 2t\sqrt{\widetilde{B}_{1}(-1,1)\,\widetilde{B}_{-1}(-1,1)} \geq 1-t\sum_{i,j} d_{i,j}>0,
\end{equation*}
thereby proving \eqref{eq:limit_Delta-_-1}. On the other hand, $\widetilde{B}_{1}(1,1)$ and $\widetilde{B}_{-1}(1,1)$ are both positive and with \eqref{eq:limit_Delta+_1},
\begin{equation*}
     \Delta^-(1,1,t)\leq \Delta^+(1,1,t) < 0,
\end{equation*}
which shows \eqref{eq:limit_Delta-_1}. Then the mean value theorem easily implies that $\Delta^-(y,1,t)$ has one zero in $(-1,1)$.

Assuming now that the condition \eqref{eq:cond_z} is satisfied, let us consider the associated $z$ with biggest value, i.e., with smallest absolute value. The inequality \eqref{eq:limit_Delta-_-1} might not be true anymore (simply because $\Delta^-(-1,1,t)$ might be nonreal). Instead let us prove that
\begin{equation}
     \Delta^-(z,1,t)\geq0.\label{eq:limit_Delta-_z}
\end{equation}
Along the proof of Lemma \ref{lem:posneg}, we have seen that $\mathbf{B}_{0}(y,1,t)$ has no root in $(-\infty,0)$ and so has constant sign on this interval. If $d_{0,1}\neq 0$, then the quadratic polynomial $\mathbf{B}_{0}(y,1,t)$ has dominant term $d_{0,1}> 0$ and is therefore positive on $(-\infty,0)$. If $d_{0,1}=0$, then the affine function $\mathbf{B}_{0}(y,1,t)$ has dominant term $d_{0,0}-1/t < 0$ (we use $t\in (0,1)$ and $d_{0,0}\in [0,1]$) and is also positive on $(-\infty,0)$.
Therefore $\mathbf{B}_{0}(z,1,t)\geq0$. With $\widetilde{B}_{1}(z,1)\widetilde{B}_{-1}(z,1)=0$, we deduce \eqref{eq:limit_Delta-_z}. Note that \eqref{eq:limit_Delta-_1} still holds true in this context. By construction, $\widetilde{B}_{1}(z,1)\widetilde{B}_{-1}(z,1)\geq 0$ on $[z,0]\subset [z,1]$. Applying the mean value theorem we deduce that $\Delta^-(y,1,t)$ has one zero in $[z,1)\subset (-1,1)$.

We now focus on the other zero of $\Delta^-$. Consider first the subcase $\beta_{4}>0$. Recall that  by \eqref{eq:expanded_discriminant} the condition $\beta_{4}>0$ is equivalent to $d_{0,1}>2\sqrt{d_{1,1}d_{-1,1}}$. Therefore one obtains
\begin{equation} 
     \lim_{y\to +\infty} \Delta^-( y,1,t) =+\infty \quad\text{if } \beta_4>0.\label{eq:limit_Delta-_inf}
\end{equation}
Together with \eqref{eq:limit_Delta-_1}, the mean value theorem implies that for $\beta_{4}>0$, $\Delta^-( y,1,t)$ has at least one real zero in $(1,+\infty)$.

If $\beta_{4}=0$, $[1\!:\!0]$ is obviously a zero of $\Delta^-(y_{0},y_{1},t)$. If now $\beta_{4}<0$, i.e., 
\begin{equation}
\label{eq:beta_4<0_parameters}
    0 \leq  d_{0,1}<2\sqrt{d_{1,1}d_{-1,1}},
\end{equation}
then we must have $d_{1,1}d_{-1,1}\neq 0$, for otherwise \eqref{eq:beta_4<0_parameters} could not be true.
Then both $\widetilde{B}_{1}(y,1)$ and $\widetilde{B}_{-1}(y,1)$ have degree two with a positive dominant term. Therefore
\begin{equation*}
     \lim_{y\to -\infty} \widetilde{B}_{1}(y,1) \widetilde{B}_{-1}(y,1)=+\infty.
\end{equation*}
Consider now the condition 
\begin{equation}
\label{eq:cond_zbis}
     \exists z\leq0\text{ such that } \widetilde{B}_{1}(z,1) \widetilde{B}_{-1}(z,1)=0.
\end{equation}
Remind that  $\widetilde{B}_{1}(y,1) \widetilde{B}_{-1}(y,1)$ is strictly positive on $(0,+\infty)$. If there is no $z$ satisfying to \eqref{eq:cond_zbis}, then $\Delta^-$ is real on $\mathbb R$, and using \eqref{eq:beta_4<0_parameters} we easily compute
\begin{equation} 
     \lim_{y\to -\infty} \Delta^-( y,1,t) =-\infty\quad \text{if } \beta_4<0\label{eq:limit_Delta-_-inf}.
\end{equation}
We conclude by the mean value theorem applied to the function $\Delta^-$ on $(-\infty,-1)$, using further \eqref{eq:limit_Delta-_-1}.

If \eqref{eq:cond_zbis} holds true, let us consider the smallest $z$, i.e., with largest absolute value. Then \eqref{eq:limit_Delta-_z} is satisfied and we conclude by the same line of argument. The proof is complete.
 \end{proof}

\begin{lemma}
\label{lem:Delta+NOTDelta-}
Assume that the walk is not in a configuration described in Lemma \ref{lem:doublezero}. For all ${t\in (0,1)}$ and $[y_0 \!:\!y_{1}]\in\P1(\C)\setminus \{[0 \!:\!1] \}$ such that $\Delta^+(y_0,y_1,t)=0$,
\begin{equation}
\label{eq:Delta+NOTDelta-}
     \Delta^-(y_0,y_1,t)\neq \Delta^+(y_0,y_1,t).
\end{equation}
\end{lemma}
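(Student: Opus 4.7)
The strategy is to convert the simultaneous vanishing into two polynomial identities and then invoke Lemma~\ref{lem:posneg}. From the definition of $\Delta^{\pm}$ as $t\mathbf{B}_0\pm 2t\sqrt{\widetilde{B}_1\widetilde{B}_{-1}}$ given in \eqref{eq:definition_Delta+-}, adding and subtracting shows that the conjunction ``$\Delta^+(y_0,y_1,t)=0$ and $\Delta^-(y_0,y_1,t)=\Delta^+(y_0,y_1,t)$'' is equivalent to the pair of equations
\[
\mathbf{B}_0(y_0,y_1,t)=0 \quad\text{and}\quad \widetilde{B}_1(y_0,y_1)\,\widetilde{B}_{-1}(y_0,y_1)=0.
\]
So it suffices to show that under our hypothesis the only common zero of these two conditions in $\P1(\C)$ is $[0\!:\!1]$.

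\textbf{Main case.} I would work in the affine chart $y_1=1$ and write $y:=y_0$. A direct computation from \eqref{eq:def_B} gives the identity $t\mathbf{B}_0(y,1,t)=t\widetilde{B}_0(y)-y$, so in this chart the two conditions read $t\widetilde{B}_0(y)-y=0$ and $\widetilde{B}_i(y)=0$ for some $i\in\{-1,1\}$. The hypothesis that the walk is not in a configuration of Lemma~\ref{lem:doublezero} implies, via Proposition~\ref{prop:singcases}, that the walk is nondegenerate; in particular none of $\widetilde{A}_{\pm1},\widetilde{B}_{\pm1}$ is identically zero, so Lemma~\ref{lem:posneg} applies and forces $y=0$, i.e., $[y_0:y_1]=[0\!:\!1]$, contradicting our assumption.

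\textbf{Boundary point.} The only point missed by the chart $y_1=1$ is $[1\!:\!0]$, where the two conditions become $d_{0,1}=0$ and $d_{1,1}d_{-1,1}=0$ (``roots at infinity'' of the corresponding homogeneous forms). I expect this to be the main obstacle, since Lemma~\ref{lem:posneg} sees only finite common roots. The plan is to split into the two subcases $d_{1,1}=0$ and $d_{-1,1}=0$ and enumerate in each the possible remaining weights, paralleling the case analysis in the proof of Proposition~\ref{prop:singcases}, to show that the support of the walk is then forced into one of the eight configurations of Lemma~\ref{lem:doublezero}. This combinatorial step is precisely where the full strength of the Lemma~\ref{lem:doublezero} exclusion (as opposed to mere nondegeneracy) is used; once it is carried out, the two cases together give the desired conclusion.
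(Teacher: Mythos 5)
Your main-case reduction is exactly the paper's: from $\Delta^+=0$ together with $\Delta^-=\Delta^+$ you correctly extract $\mathbf{B}_0(y,1,t)=0$ and $\widetilde{B}_1(y,1)\widetilde{B}_{-1}(y,1)=0$, note that the non-doublezero hypothesis gives nondegeneracy via Proposition~\ref{coro1}, and then apply Lemma~\ref{lem:posneg} to force $y=0$. So far this is the paper's argument (its equation \eqref{eq:b=a=0_or_b=c=0}).

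The boundary point $[1\!:\!0]$ is where your plan breaks down. You propose to show that the conditions $d_{0,1}=0$ and $d_{1,1}d_{-1,1}=0$ force the support of the walk into one of the eight configurations of Lemma~\ref{lem:doublezero}. This is false: take for instance $d_{-1,1}=d_{1,0}=d_{-1,0}=d_{0,-1}=1/4$ and all other weights zero. Then $d_{0,1}=0$ and $d_{1,1}=0$, but the step set $\{(-1,1),(1,0),(-1,0),(0,-1)\}$ is contained in none of the eight half-space configurations listed in Lemma~\ref{lem:doublezero} (each of the eight omits at least one of these four directions). So no amount of case-checking along the lines of Proposition~\ref{prop:singcases} can close this gap — the combinatorics simply do not exclude $[1\!:\!0]$ from being a simultaneous zero of both $\mathbf{B}_0$ and $\widetilde{B}_1\widetilde{B}_{-1}$.

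The paper disposes of $[1\!:\!0]$ by a different, analytic argument: it invokes the limit \eqref{eq:limit_Delta+_inf}, established inside the proof of Lemma~\ref{lem:roots_Delta+}, which says $\Delta^+(y,1,t)\to+\infty$ as $y\to+\infty$. Crucially, that lemma handles precisely the borderline subcase $d_{0,1}+2\sqrt{d_{-1,1}d_{1,1}}=0$ by observing that, under the non-doublezero hypothesis, if say $d_{1,1}=0$ then both $d_{-1,1}$ and $d_{1,0}$ must be nonzero, so $\widetilde{B}_1(y,1)\widetilde{B}_{-1}(y,1)$ has degree exactly three with positive leading term $d_{-1,1}d_{1,0}$, making $2t\sqrt{\widetilde{B}_1\widetilde{B}_{-1}}$ of order $y^{3/2}$ dominate the degree-one term $t\mathbf{B}_0(y,1,t)$ as $y\to\infty$. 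It is this sub-leading growth of the square-root term, not the combinatorics of which weights vanish, that rules out a zero of $\Delta^+$ at infinity. If you want to repair your proof you should replace your third paragraph by a citation of \eqref{eq:limit_Delta+_inf}, or reproduce the degree-three dominance argument from Lemma~\ref{lem:roots_Delta+}.
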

 
\begin{proof}
Thanks to \eqref{eq:limit_Delta+_inf}, $[1\!:\!0]$ cannot be a zero of the factor $\Delta^+(y_0,y_1,t)$. So we may assume that $[y_{0}\!:\!y_{1}]=[y\!:\!1]$ with $y\in \C^{*}$.
If $\Delta^-(y,1,t)=\Delta^+(y,1,t)$ for some $t$, then necessarily 
\begin{equation}
\label{eq:b=a=0_or_b=c=0}
     \mathbf{B}_{0}(y,1,t)=\widetilde{B}_{1}(y,1)=0\quad \text{or}\quad\mathbf{B}_{0}(y,1,t)=\widetilde{B}_{-1}(y,1)=0.
\end{equation} 
Lemma \ref{lem:posneg} yields $y=0$.  Thus for $t\in (0,1)$, $[y_{0}\!:\!y_{1}]\in\P1(\C)\setminus \{[0 \!:\!1] \}$, with ${\Delta^+(y_0,y_1,t)=0}$, we have $ \Delta^-(y_0,y_1,t)\neq \Delta^+(y_0,y_1,t)$. 
\end{proof}

We are now able to show Theorem \ref{thm:disczeroes}.
\begin{proof}[Proof of Theorem \ref{thm:disczeroes}]
By Lemmas \ref{lem:roots_Delta+} and \ref{lem:roots_Delta-}, $\Delta^{-}(y_0,y_1,t)$ has at least two distinct real zeros (resp.\ $\Delta^{+}(y_0,y_1,t)$ has at least two distinct nonzero real zeros). With Lemma \ref{lem:Delta+NOTDelta-}, the four zeros are distinct. Going back to Lemmas \ref{lem:roots_Delta+} and \ref{lem:roots_Delta-}, we obtain that they are located as stated in Theorem \ref{thm:disczeroes}.
\end{proof}

\begin{rem}
\label{rem:critical_point}
If the weights are not supported in any half-space (or equivalently, if the steps are not in a configuration of Lemma \ref{lem:doublezero}), then the step set polynomial $S(x,y)=\sum d_{i,j}x^{i}y^{j}$ defined in \eqref{eq:def_S} has a unique global minimizing point $(x_0,y_0)$ on $\mathbb R_{>0}^2$, which satisfies
\begin{equation*}
     \frac{\partial S}{\partial x}(x_0,y_0)=\frac{\partial S}{\partial y}(x_0,y_0)=0,
\end{equation*}
see \cite[Theorem 4]{BRS}. (This point $(x_0,y_0)$ is important, in particular $S(x_0,y_0)$ is related to the exponential growth of the total number of excursions by \cite[Theorem 4]{BRS}.) Obviously one has $S(x_0,y_0)\leq S(1,1)=1$, as $(x_0,y_0)$ is a global minimizer. Taking 
\begin{equation*}
     t=t_0=\frac{1}{S(x_0,y_0)}\geq 1,
\end{equation*}
the point $(x_0,y_0)$ is a singularity of $\Etproj$ (for $t=t_0$), and $\Etproj$ becomes of genus $0$, see Proposition~\ref{prop:genuscurvewalk}.

We conjecture that $t_0$ is the smallest value of $t>0$ for which $\Etproj$ has genus $0$. This holds true in case of a zero drift, for which $t_0=1$ and two branch points are equal to $1$ by \cite[Chapter~6]{FIM}.
\end{rem}
 
\begin{assumption}
\label{assumption} 
From now we assume that the model is nondegenerate and that $\Etproj$ is an elliptic curve. 
\end{assumption}
Recall that Proposition \ref{coro1} gives a very simple characterization of walks satisfying Assumption~\ref{assumption}.

\subsection{Group of the walk}

Following  \cite[Chapter 2]{FIM}, \cite[Section 3]{BMM} or \cite[Section~3]{KauersYatchak}, we attach to any model its group, which by definition is the group $\langle i_{1},i_{2} \rangle$ generated by the involutive birational transformations of $\P1(\C)^{2}$ given by 
\begin{equation}
\label{eq:generators_group}
     \iota_1(x,y) =\left(\frac{x_{0}}{x_{1}}, \frac{A_{-1}(\frac{x_{0}}{x_{1}}) }{A_{1}(\frac{x_{0}}{x_{1}})\frac{y_{0}}{y_{1}}}\right)\quad \text{and} \quad \iota_2(x,y)=\left(\frac{B_{-1}(\frac{y_{0}}{y_{1}})}{B_{1}(\frac{y_{0}}{y_{1}})\frac{x_{0}}{x_{1}}},\frac{y_{0}}{y_{1}}\right),
\end{equation}
with our notation \eqref{eq:def_S}.

\begin{figure}
\begin{center}
\includegraphics[scale=0.3]{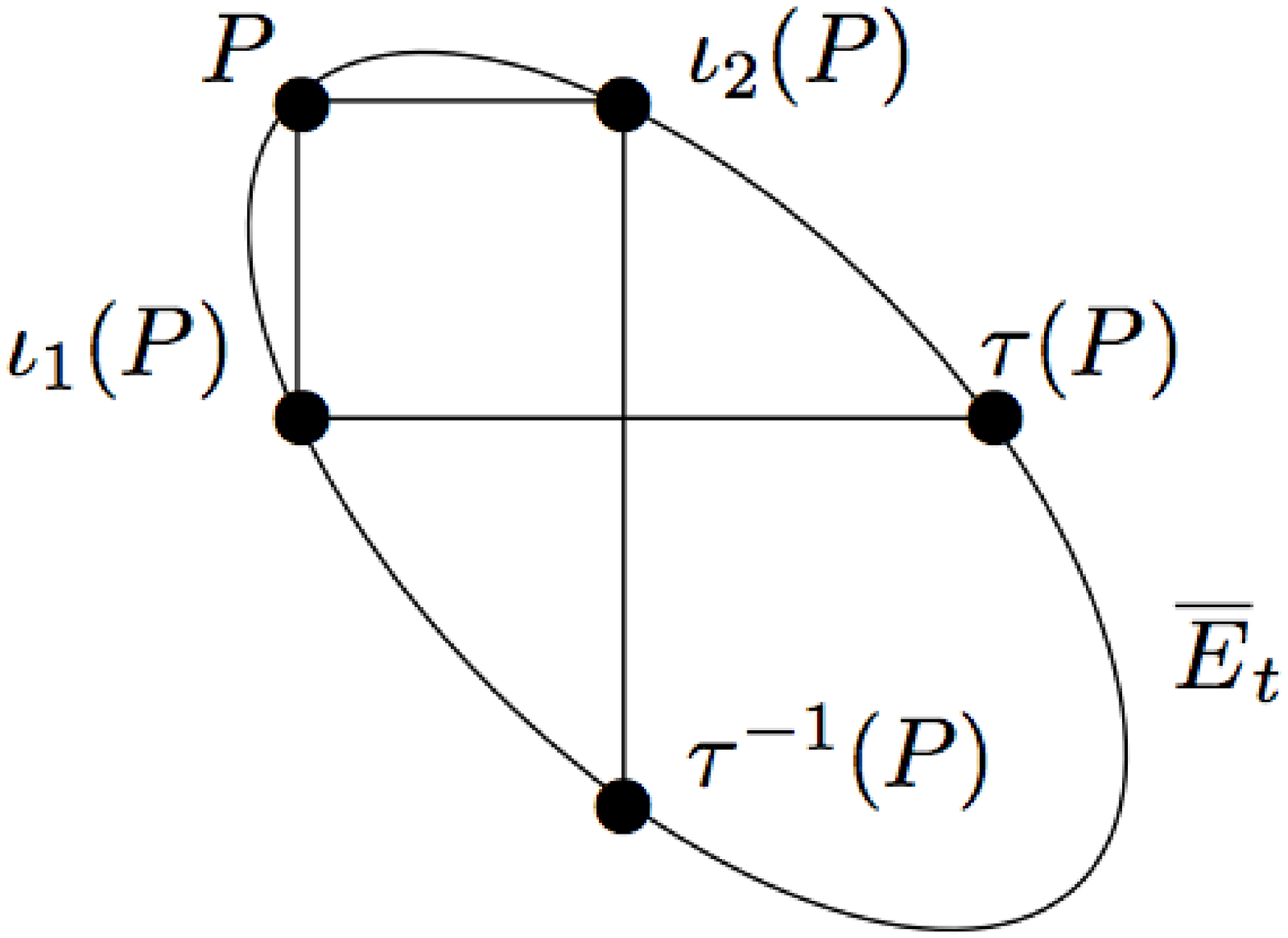}
\end{center}
\caption{The maps $\iota_{1}$ and $\iota_{2}$ restricted to the kernel curve $\Etproj $}\label{figiota}
\end{figure}
The kernel curve $\Etproj$ is left invariant by the natural action of this group, see \eqref{eq:expression_Et}. For a fixed value of $x$, there are at most  two possible values of $y$ such that $(x,y)\in\Etproj$. The involution $\iota_1$ corresponds to interchanging these values, see Figure \ref{figiota}. A similar interpretation can be given for $\iota_2$. 

Let us finally define
\begin{equation*}
     \tau=\iota_2 \circ \iota_1. 
\end{equation*}
Note that such a map is known as a QRT-map and has been widely studied, see \cite{DuistQRT}. As we will see later,  the algebraic nature of the series \eqref{eq:GF}, according to the classes depicted in \eqref{eq:classes}, highly depends on the fact that $\tau$ has finite or infinite order. Note that when the $d_{i,j}$ are fixed, the cardinality of the group on $\Etproj$ depends upon $t$, see \cite{fayolleRaschel} for concrete examples.

\section{Analytic continuation of the generating functions}
\label{sec:uniformization}

The goal of this section is to prove that $F^{1}(x;t)$ and $F^{2}(y;t)$ defined in \eqref{eq:funcequ} may be continued into multivalued meromorphic functions on the elliptic curve $\Etproj$. We are going to use a uniformization of $\Etproj$ via the Weierstrass elliptic function in order to see the multivalued functions as univalued meromorphic functions on $\C$. The starting point is that for $t$ fixed in $(0,1)$, the generating function $Q(x,y;t)$ is analytic for $\vert x\vert,\vert y\vert<1$, see Section \ref{sec:kernel}.  

\subsection{Uniformization of the elliptic curve $\Etproj$}
\label{subsec:explicit_parametrization}

Recall that Assumption~\ref{assumption} holds: the walk is nondegenerate and $\Etproj$ is an elliptic curve. This implies (Proposition \ref{prop:genuscurvewalk}) that the discriminants $\Delta^x_{[x_0:x_1]}$ and $\Delta^y_{[y_0:y_1]}$  have four distinct zeros. 

Since $\Etproj$ is an elliptic curve, we can identify $\Etproj$ with $\C/(\Z\omega_1 + \Z\omega_2)$, with $(\omega_1,\omega_2)\in \C^{2}$ basis of a lattice, via the $(\Z\omega_1 + \Z\omega_2)$-periodic map  
\begin{equation}
\label{eq:Lambda}
\begin{array}{llll}
\Lambda :& \C& \rightarrow &\overline{E}_t\\
 &\omega &\mapsto& (\mathfrak{q}_1(\omega), \mathfrak{q}_2(\omega)),
\end{array}
\end{equation}
where $\mathfrak{q}_1, \mathfrak{q}_2$ are rational functions of $\wp$ and its derivative $d\wp/d\omega$, and $\wp$ is the Weierstrass function associated with the lattice $\Z\omega_1 + \Z\omega_2$:
\begin{equation}
\label{eq:expression_wp_expanded}
     \wp(\omega )=\wp(\omega;\omega_1,\omega_2):=\frac{1}{\omega^{2}}+ \sum_{(\ell_{1},\ell_{2}) \in \Z^{2}\setminus \{(0,0)\}} \left(\frac{1}{(\omega +\ell_{1}\omega_{1}+\ell_{2}\omega_{2})^{2}} -\frac{1}{(\ell_{1}\omega_{1}+\ell_{2}\omega_{2})^{2}}\right).
\end{equation}     
Then the field of meromorphic functions on $\Etproj$ may be identified with the field of meromorphic functions on $\C/(\Z\omega_1 + \Z\omega_2)$, i.e., the field of meromorphic functions on $\C$ that are $(\omega_{1},\omega_{2})$-periodic (or elliptic). Classically, this latter field is equal to $\C(\wp, \wp')$, see \cite{WW}.
 
The goal of this subsection is to give explicit expressions for $\mathfrak{q}_1(\omega)$, $\mathfrak{q}_2(\omega)$, $\omega_{1}$ and $\omega_{2}$. Such computations have been already performed in \cite[Section 3.3]{FIM} when $t=1$, and in \cite{RaschelJEMS} in the unweighted case for $t\in(0,1/\vert\mathcal S\vert)$.
 
The maps $\iota_{1}$, $\iota_{2}$ and $\tau$ may be lifted to the $\omega$-plane. We will call them $\iup_{1}$, $\iup_{2}$ and $\widetilde{\tau}$, respectively. So we have the commutative diagrams 
\begin{equation*}
\xymatrix{
    \Etproj  \ar@{->}[r]^{\iota_k} & \Etproj  \\
    \C \ar@{->}[u]^\Lambda \ar@{->}[r]_{\iup_k} & \C \ar@{->}[u]_\Lambda 
  }
  \qquad\qquad\qquad
  \xymatrix{
    \Etproj  \ar@{->}[r]^{\tau} & \Etproj  \\
\C \ar@{->}[u]^\Lambda \ar@{->}[r]_{\widetilde{\tau}} & \C \ar@{->}[u]_\Lambda 
  } 
\end{equation*}
More precisely, following \cite{DuistQRT} (see in particular Proposition~2.5.2, Page 35 and Remark~2.3.8), there exist two complex numbers $\omega_{3}$ and $\omega_{4}$ such that 
\begin{equation}
\label{eq:expression_group_universal_cover}
     \iup_{1}(\omega)=-\omega+\omega_{4}, \quad\iup_{2}(\omega)=-\omega+\omega_{3}\quad \text{and} \quad\widetilde{\tau}(\omega)=\omega-\omega_{4}+\omega_{3}. 
\end{equation}
Up to a variable change of the form $\omega\mapsto \omega+\omega_{4}$, we may always reduce to the case $\omega_{4}=0$. So let us assume that $\omega_{4}=0$ 
in \eqref{eq:expression_group_universal_cover}. 
 
We have $\Lambda (\omega)=\Lambda(\omega+\omega_{1})=\Lambda(\omega+\omega_{2})$ for all $\omega\in \C$. This implies that for all $\ell_{1},\ell_{2}\in \Z$, $\iup_{2}$ may be replaced by $\omega\mapsto -\omega+\omega_{3}+\ell_{1}\omega_{1}+\ell_{2}\omega_{2}$. This shows that $\omega_{3}$ is not uniquely defined: it is only defined modulo the lattice $\Z\omega_{1}+\Z\omega_{2}$. A suitable choice of $\omega_{1}$, $\omega_{2}$ and $\omega_{3}$ will be made in Proposition \ref{prop:uniformization} and Lemma \ref{lem:formula_omega_3}.
 
Using \eqref{eq:expression_branch_points_x} and \eqref{eq:expression_D_0}, the discriminant $D(x)$ can be written as 
\begin{equation*}
     D(x)=B^{2}(x)-4A(x)C(x),
\end{equation*} 
with
\begin{equation*}
     \left\{\begin{array}{rl}
A(x)&\hspace{-3mm}=t(d_{-1,1} + d_{0,1} x + d_{1,1}x^2),\smallskip\\
B(x)&\hspace{-3mm}=t  (d_{-1,0} -  \frac{1}{t} x +d_{0,0}x + d_{1,0}x^2),\smallskip\\
C(x)&\hspace{-3mm}=t(d_{-1,-1} + d_{0,-1} x + d_{1,-1}x^2).
\end{array}\right.
\end{equation*}
The link between the above notations and the $\widetilde{A}_{i}$'s used in Section \ref{sec:kernel} is $A=t\widetilde{A}_{1}$, $B=t\widetilde{A}_{0}-x$ and $C=t\widetilde{A}_{-1}$.

The following proposition is the adaptation of \cite[Lemma 3.3.1]{FIM} to our context. Remind that we have ordered the $a_{i}$'s in such a way that the cycle of $\mathbb{P}^{1}(\R)$ starting from $-1$ and going to $+\infty$, and then from $-\infty$ to $-1$, crosses the $a_{i}$'s in the order $a_{1},a_{2},a_{3},a_{4}$, see Figure \ref{fig:ordering} and Remark~\ref{rem:ordering}. Before stating Proposition \ref{prop:uniformization}, note the following convention: a path of integration in an integral of the form $\int_{a_{i}}^{a_{j}}$ is the real path from $a_{i}$ to $a_{j}$ if $a_{i}\leq a_{j}$, and otherwise is the union of the real paths from $a_{i}$ to $+\infty$ and from $-\infty$ to $a_{j}$. 

\begin{propo}
\label{prop:uniformization}
The elliptic curve $\Etproj$ in \eqref{eq:expression_Et} admits a uniformization of the form
\begin{equation*}
     \Etproj
     = \bigl\{(x(\omega),y(\omega)):\omega\in\C/(\Z\omega_1 + \Z\omega_2)\bigr\},
\end{equation*}
where $x(\omega)$ and $y(\omega)$ are given by (note that $z=2A(x)y+B(x)$)
\begin{equation*}
    \begin{array}{|l|c|c|}\hline 
&x(\omega)&z(\omega)\\\hline
a_{4}\neq [1\!:\!0]&\phantom{\Bigg\|}\left[a_{4}+\frac{D'(a_{4})}{\wp (\omega)-\frac{1}{6}D''(a_{4})}:1\right]\phantom{\Bigg\|}&\phantom{\Bigg\|}\left[\frac{D'(a_{4})\wp'(\omega)}{2(\wp(\omega)-\frac{1}{6}D''(a_{4}))^{2}}:1\right]\phantom{\Bigg\|}\\\hline 
a_{4}= [1\!:\!0]&\phantom{\Big\|}\left[\wp(\omega)-\alpha_{2}/3:\alpha_{3}\right]\phantom{\Big\|}&\phantom{\Big\|}\left[-\wp'(\omega):2\alpha_{3}\right]\phantom{\Big\|}\\\hline 
\end{array}
\end{equation*}
The above formulas use the Weierstrass elliptic function $\wp(\omega )=\wp(\omega;\omega_1,\omega_2)$ defined in \eqref{eq:expression_wp_expanded}.
Moreover, a suitable choice for the periods $(\omega_1,\omega_2)$ is given by the elliptic integrals
\begin{equation}
\label{eq:expression_omega_1_omega_2}
     \omega_{1}=\mathbf{i}\int_{a_{3}}^{a_{4}} \frac{dx}{\sqrt{\vert D(x)\vert}}\in \mathbf{i}\R_{>0}\quad\text{and}\quad
\omega_{2}=\int_{a_{4}}^{a_{1}} \frac{dx}{\sqrt{D(x)}}\in \R_{>0}.
\end{equation} 
\end{propo}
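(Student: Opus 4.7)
The plan is to reduce the defining equation of $\Etproj$ to the standard Weierstrass form $(\wp')^2 = 4\wp^3 - g_2\wp - g_3$ and then read off the uniformization by inverting. First I would rewrite $\overline{K}=0$ on the affine chart as a quadratic in $y$: setting $z = 2A(x)y + B(x)$ with $A,B,C$ as introduced before Proposition~\ref{prop:uniformization}, the kernel equation becomes the hyperelliptic equation $z^2 = D(x)$. Because $D$ has four distinct zeros $a_1,a_2,a_3,a_4$ by Proposition~\ref{prop:genuscurvewalk}, this defines a smooth projective double cover of $\P1(\C)$ branched at the $a_i$, of genus one, birational to $\Etproj$.

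The next step is to bring $z^2 = D(x)$ into Weierstrass form, treating two cases. If $a_4 \neq [1:0]$, the Möbius substitution $x = a_4 + D'(a_4)/u$ sends $a_4$ to $u = \infty$; since $D(a_4) = 0$ and $D'(a_4)\neq 0$, clearing denominators yields $(u^2 z/D'(a_4))^2 = P(u)$ for a cubic $P$. The translation $u = \wp(\omega) - D''(a_4)/6$ then removes the quadratic coefficient of $P$, and a computation of the Weierstrass invariants $g_2,g_3$ as explicit polynomials in $\alpha_0,\ldots,\alpha_4$ shows that $P$ is transformed into $4\wp^3 - g_2\wp - g_3 = (\wp')^2$; inverting the substitutions produces the stated formulas for $x(\omega)$ and $z(\omega)$. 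If $a_4 = [1:0]$, then $\alpha_4 = 0$ and $D$ is already a cubic; the direct linear substitution $x = (\wp(\omega) - \alpha_2/3)/\alpha_3$ cancels the $x^2$-coefficient of $D(x)$ and identifies $D(x)$ with $(\wp')^2/(4\alpha_3^2)$, producing $z(\omega) = -\wp'(\omega)/(2\alpha_3)$. In either case, $y(\omega) = (z(\omega) - B(x(\omega)))/(2A(x(\omega)))$ is also elliptic in $\omega$, which defines the map $\Lambda$ and gives the identification of $\Etproj$ with $\C/(\Z\omega_1+\Z\omega_2)$.

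For the period formulas, I would use the fact that $dx/z$ is a nonvanishing holomorphic differential on the elliptic curve $\Etproj$, so its pullback under $\Lambda$ must be a nonzero constant multiple of $d\omega$; the normalization $\wp(\omega)\sim 1/\omega^2$ near $\omega = 0$ forces this constant to be $1$, i.e., $d\omega = dx/z$ through the substitution. Consequently $(\omega_1,\omega_2)$ are obtained by integrating $dx/z$ along two independent homology cycles of $\Etproj$, realized as real vanishing loops wrapped around the cuts $[a_3,a_4]$ and $[a_4,a_1]$ (in the cyclic sense of Remark~\ref{rem:ordering}). By Theorem~\ref{thm:disczeroes}, $D$ is negative on $(a_3,a_4)$ and positive on $(a_4,a_1)$, so halving the loop integrals yields $\omega_1 = \mathbf{i}\int_{a_3}^{a_4} dx/\sqrt{|D(x)|} \in \mathbf{i}\R_{>0}$ and $\omega_2 = \int_{a_4}^{a_1} dx/\sqrt{D(x)} \in \R_{>0}$.

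The main obstacle I expect is the bookkeeping in the Weierstrass reduction: one must verify that the shift $u = \wp(\omega) - D''(a_4)/6$, together with the correct identification of $g_2$ and $g_3$ in terms of the Taylor coefficients of $D$ at $a_4$, makes the transformed cubic coincide \emph{exactly} with $(\wp')^2$ and not a nonzero scalar multiple, so that both closed-form formulas for $x(\omega)$ and $z(\omega)$ hold with the precise normalizing constants stated. A parallel but slightly simpler verification is required in the case $a_4 = [1:0]$, and in each case one must separately check that the Möbius or affine change of variable respects the chosen orientation so that the period integrals come out with the correct signs and imaginary parts.
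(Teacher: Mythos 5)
Your reduction to Weierstrass form is essentially the paper's: the same M\"obius change of variable $x = a_4 + D'(a_4)/u$ (equivalently $u = D'(a_4)/(x-a_4)$, $v = 2zD'(a_4)/(x-a_4)^2$) when $a_4\neq[1\!:\!0]$, the same translation $\mathbf{u} = u + D''(a_4)/6$ to kill the quadratic term, and the same affine change of variable when $a_4=[1\!:\!0]$. That part is fine. The period calculation, however, contains a real gap. You assert that the pullback of $dx/z$ under $\Lambda$ equals $d\omega$, with the constant ``forced to be $1$'' by the normalization $\wp(\omega)\sim\omega^{-2}$. That is false: with the stated formulas one has, near $\omega = 0$, $x - a_4 \sim D'(a_4)\omega^2$ and $z \sim -D'(a_4)\omega$ (since $\wp'\sim -2\omega^{-3}$), hence $dx/z \sim -2\,d\omega$; equivalently, $d\omega = d\mathbf{u}/v = -dx/(2z)$ under the substitution. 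The correct constant is $-2$, not $1$.

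This matters because your conclusion then becomes internally inconsistent. If $d\omega$ really equalled $dx/z$, a loop integral of $dx/z$ around a branch cut would give $2\int_{\text{cut}} dx/\sqrt{D}$, so you would obtain $\omega_i = 2\int$; the unexplained step ``halving the loop integrals'' is a fudge to match the answer, not a derivation. With the correct normalization $d\omega = -\tfrac12\,dx/z$, the extra factor of $-2$ cancels against the two-sheet factor of the loop integral and one does recover $\omega_i = \pm\int_{\text{cut}}dx/\sqrt{D}$. The paper avoids this bookkeeping by starting from the Whittaker--Watson period formulas (which already carry the outer factor of $2$, cf.\ \eqref{eq:expression_periods_WW}) and substituting back to the $x$-variable, where the factor from $d\mathbf{u} = -D'(a_4)(x-a_4)^{-2}dx$ cancels it. You should also carry out the identification $x(0)=a_4$, $x(\omega_1/2)=a_3$, $x(\omega_2/2)=a_1$, $x((\omega_1+\omega_2)/2)=a_2$ explicitly --- which requires the sign statements of Theorem~\ref{thm:disczeroes} and the monotonicity of the M\"obius map --- since this is what pins down the cuts $[a_3,a_4]$ and $[a_4,a_1]$ and fixes the signs that put $\omega_1$ in $\mathbf{i}\R_{>0}$ and $\omega_2$ in $\R_{>0}$.
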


Before proving Proposition \ref{prop:uniformization}, let us do a series of remarks.

\smallskip

$\bullet$ Formula \eqref{eq:expression_omega_1_omega_2} is stated in \cite[Lemma 3.3.2]{FIM} for $t=1$. Note a small misprint in \cite[Lemma 3.3.2]{FIM}, namely a (multiplicative) factor of $2$ that should be $1$.

\smallskip

$\bullet$ Note that the expression \eqref{eq:expression_omega_1_omega_2} for the periods is not unique (however, it is unique up to a unimodular transform). Our motivation for doing this particular choice is to have a real positive period (namely, $\omega_2$) and a purely imaginary one ($\omega_1$).

\smallskip

$\bullet$ As we shall see along the proof, the elliptic curve $\Etproj$ admits the classical Weierstrass canonical form
\begin{equation}
\label{eq:Weierstrass_canonical_form} 
     \Etproj=\{(\mathbf{u},v)\in\P1(\C)^2:v^{2}=4\mathbf{u}^{3}-g_{2}\mathbf{u}-g_{3}\},
\end{equation}
with real invariants $g_2,g_3$ defined in \eqref{eq:expression_g2g3_case1} (resp.\ \eqref{eq:expression_g2g3_case0}) in the case $a_{4}\neq [1\!:\!0]$ (resp.\ $a_{4}=[1\!:\!0]$). Furthermore, the discriminant of $4\mathbf{u}^{3}-g_{2}\mathbf{u}-g_{3}$ is positive.

\smallskip

$\bullet$ Obviously, replacing $z$ by $-z$ in the statement of Proposition \ref{prop:uniformization} gives another equivalent uniformization, see \eqref{eq:Riemann_surface_sqrt}. We have arbitrarily fixed the sign of $z$ by demanding that $z$ and $\wp'$ should have the same sign, as in \cite[Lemma 3.3.1]{FIM}.

\smallskip

$\bullet$ An alternative, more symmetric formula for the second coordinate of the uniformization $y(\omega)$ will be given in \eqref{eq6}.

\smallskip 

$\bullet$ Remind that $\omega_3$ is defined modulo the lattice. In what follows, with the above expressions \eqref{eq:expression_omega_1_omega_2} of $\omega_{1}$ and $\omega_{2}$, we will fix $\omega_3$ in the fundamental parallelogram $[0,\omega_1)+[0,\omega_2)$. Remark that an explicit formula for $\omega_3$ will be obtained in \eqref{eq:expression_omega_3}.

\begin{proof}[Proof of Proposition \ref{prop:uniformization}]
Letting $z=2A(x)y+B(x)$, the equality $\overline{K}(x,y;t)=0$ can easily be reformulated as 
\begin{equation}
\label{eq:Riemann_surface_sqrt}
     z^{2}=D(x).
\end{equation}     

Assume first that $a_{4}= [1\!:\!0]$. We have $\alpha_{4}=0$ in \eqref{eq:expression_D_0} and thus $\alpha_{3}\neq 0$ (otherwise $[1\!:\!0]$ would be a double zero of the discriminant, contradicting our assumption). In this case we perform the changes of variable
\begin{equation*}
     x=\frac{\mathbf{u}-\alpha_{2}/3}{\alpha_{3}}\quad \text{and} \quad z=\frac{-v}{2\alpha_{3}}
\end{equation*}
in order to recover the Weierstrass canonical form \eqref{eq:Weierstrass_canonical_form}. We just have to set $\mathbf{u}=\wp$ and $v=\wp '$ to obtain the result announced in Proposition \ref{prop:uniformization}.
In this case the invariants are given by
\begin{equation}
\label{eq:expression_g2g3_case0}
     g_2=\frac{4}{3}\alpha_2^2-4\alpha_1\alpha_3\quad\text{and}\quad g_3=-\frac{8}{27}\alpha_2^3+\frac{4}{3}\alpha_1\alpha_2\alpha_3-4\alpha_0\alpha_3^2.
\end{equation}
In particular, $g_2$ and $g_3$ are clearly polynomial functions of $t$.

Assume now that $a_{4}\neq [1\!:\!0]$. The main idea is to reduce to the case $a_{4}=[1\!:\!0]$ by performing a fractional linear transformation. The branch point $a_4$ is a simple zero of $D(x)$ and hence $D'(a_{4})\neq 0$. Introduce
\begin{equation*}
     u=\frac{D'(a_{4})}{x-a_{4}}\quad\text{and}\quad v=\frac{2zD'(a_{4})}{(x-a_{4})^{2}}.
\end{equation*}
Then the Taylor formula $D(x)=\sum_{j=1}^{4}(x-a_{4})^{j}\frac{D^{(j)}(a_{4})}{j!}$ allows us to express \eqref{eq:Riemann_surface_sqrt} as  
\begin{equation*}
     v^{2}=4u^{3}+2D''(a_{4})u^{2} +\frac{2u}{3}D^{(3)}(a_{4})D'(a_{4})+\frac{D^{(4)}(a_{4})D'(a_{4})^{2}}{6}.
\end{equation*}
Letting finally $\mathbf{u}=u+\frac{D''(a_{4})}{6}$, we obtain the Weierstrass canonical form \eqref{eq:Weierstrass_canonical_form}, where
\begin{multline}
\label{eq:expression_g2g3_case1}
   g_2=\frac{D''(a_{4})^2}{3}-\frac{2D'(a_{4})D^{(3)}(a_{4})}{3}\\ 
   \text{and}\quad g_3=-\frac{D''(a_{4})^3}{27}+\frac{D'(a_{4})D''(a_{4})D^{(3)}(a_{4})}{9}-\frac{D'(a_{4})^2D^{(4)}(a_{4})}{6}.
\end{multline}    
Again, we just have to set $\mathbf{u}=\wp$ and $v=\wp '$ to obtain the result.

In both cases the invariants $g_{2},g_{3}$ are real and the discriminant of $4\mathbf{u}^{3}-g_{2}\mathbf{u}-g_{3}$ is strictly positive (this follows from the fact that the $a_{i}$'s are real and distinct). Let $e_{1}>e_{2}>e_{3}$ be the three real roots of the discriminant. By \cite[Section 20.32, Example 1]{WW} we may choose $\omega_{1},\omega_{2}$ as follows (here we replace $\omega_{1}$ by $-\omega_{1}$ in order to have a period in $\mathbf{i}\R_{>0}$):
\begin{equation}
\label{eq:expression_periods_WW}
     \omega_1=2\mathbf{i}\int_{-\infty}^{e_3}\frac{d\mathbf{u}}{\sqrt{g_{2}\mathbf{u}+g_{3}-4\mathbf{u}^{3}}}\quad\text{and}\quad
     \omega_2=2\int_{e_1}^{+\infty}\frac{d\mathbf{u}}{\sqrt{4\mathbf{u}^{3}-g_{2}\mathbf{u}-g_{3}}}.
\end{equation}
In particular $\omega_2$ is real while $\omega_1$ is a pure imaginary.

Note that since  $4\mathbf{u}^{3}-g_{2}\mathbf{u}-g_{3}$ has three distinct real roots, one has by \cite[Section 20.32]{WW}
\begin{equation}
\label{eq:Weierstrass_factorization}
    e_1=\wp\left(\frac{\omega_2}{2}\right)>e_2=\wp\left(\frac{\omega_1+\omega_2}{2}\right)>e_3=\wp\left(\frac{\omega_1}{2}\right).
\end{equation}

We now prove that, with $x(\omega)$ as in the statement of Proposition \ref{prop:uniformization}, 
\begin{equation}
\label{eq:correspondence_ai_ei}
     x(0)=a_{4},\quad x\left(\frac{\omega_{1}}{2}\right)=a_{3},\quad x\left(\frac{\omega_{1}+\omega_{2}}{2}\right)=a_2,\quad x\left(\frac{\omega_{2}}{2}\right)=a_1.
\end{equation}
The correspondence in \eqref{eq:correspondence_ai_ei} is illustrated on Figure \ref{fig:The_fund_par}. Let us first note that the change of variable between the $x$-variable in \eqref{eq:Riemann_surface_sqrt} and the $\mathbf{u}$-variable in \eqref{eq:Weierstrass_factorization} is just
\begin{equation}
\label{eq:variable_change}
     x=\left\{\begin{array}{ll}
     \displaystyle a_{4}+\frac{D'(a_{4})}{\mathbf{u}-\frac{D''(a_{4})}{6}} & \text{when } a_{4}\neq [1\!:\!0],\medskip\\
     \displaystyle \frac{\mathbf{u}-\alpha_{2}/3}{\alpha_{3}} & \text{otherwise}.
     \end{array}\right.
\end{equation}
Remind that $a_{1},a_{2},a_{3},a_{4}$ are the roots of the discriminant. By construction  the roots of $\Delta^x_{[x_0:x_1]}$ correspond to the double roots of $y\mapsto \overline{K}(a_{j},y;t)$. Therefore, the $(a_{j},Y_{\pm}(a_{j}))$ are the fixed points by $\iota_{1}$. Since  $a_{1},a_{2},a_{3},a_{4}$ are distinct and $\{0,\frac{\omega_{1}}{2},\frac{\omega_{2}}{2},\frac{\omega_{1}+\omega_{2}}{2}  \}$ are the four distinct fixed points modulo $\Z\omega_1 + \Z\omega_2$ of  $\iup_{1}(\omega)=-\omega$, we find the equality of sets
\begin{equation*}
     \Lambda \left\{0,\frac{\omega_{1}}{2},\frac{\omega_{2}}{2},\frac{\omega_{1}+\omega_{2}}{2}\right\}=\bigl\{(a_{1},Y_{\pm}(a_{1})),(a_{2},Y_{\pm}(a_{2})),(a_{3},Y_{\pm}(a_{3})),(a_{4},Y_{\pm}(a_{4}))\bigr\}.
\end{equation*}
By construction we have $x(0)=a_{4}$.
 To pursue, note that
\begin{equation*}
     \alpha_3=2d_{1,0}(d_{0,0}-1/t)-4d_{1,1}d_{0,-1}-4d_{0,1}d_{1,-1},
\end{equation*} 
see \eqref{eq:expanded_discriminant}. Since the $d_{i,j}$'s are positive and $d_{0,0}-1/t< 0$ (we use $t\in (0,1)$), we obtain $\alpha_3 \leq 0$. 

In the case $a_{4}= [1\!:\!0]$ we must have $\alpha_{4}=0$ and $\alpha_{3}\neq 0$. Hence $\alpha_3 < 0$, and the function in \eqref{eq:variable_change} is decreasing. With \eqref{eq:Weierstrass_factorization} we conclude that 
\begin{equation*}
    x\left(\frac{\omega_{2}}{2}\right)<x\left(\frac{\omega_1+\omega_{2}}{2}\right)<x\left(\frac{\omega_{1}}{2}\right),
\end{equation*}
and thus \eqref{eq:correspondence_ai_ei} holds since $a_{1}<a_{2}<a_{3}<a_{4}=+\infty$, see Figure \ref{fig:ordering}, Theorem \ref{thm:disczeroes} and Remark~\ref{rem:ordering}.

Let us now consider the case $a_{4}\neq [1\!:\!0]$ and $\alpha_4>0$. 
With the assertion on the sign of $a_{3},a_{4}$ in Theorem \ref{thm:disczeroes}, we deduce that $a_{4}$ is the biggest root, see Figure~\ref{fig:ordering}. Hence $D'(a_4)>0$. Since all of $a_1,a_2,a_3$ are smaller than $a_4$ we must have, due to \eqref{eq:variable_change}, $e_i<\frac{D''(a_{4})}{6}$. The function \eqref{eq:variable_change} being decreasing for $\mathbf{u}\in(-\infty,\frac{D''(a_{4})}{6})$ and the $e_i$'s being ordered as in \eqref{eq:Weierstrass_factorization}, we deduce \eqref{eq:correspondence_ai_ei}, similarly to the case $a_{4}= [1\!:\!0]$.
 
Consider finally the case $a_{4}\neq [1\!:\!0]$ and $\alpha_4<0$. With the assertion on the sign of $a_{3},a_{4}$ in Theorem~\ref{thm:disczeroes}, we deduce that $a_{4}$ is the smallest root, see Figure~\ref{fig:ordering}. Hence $D'(a_4)>0$. Since all of $a_1,a_2,a_3$ are bigger than $a_4$ we must have, due to \eqref{eq:variable_change}, $e_i>\frac{D''(a_{4})}{6}$. The function \eqref{eq:variable_change} being decreasing for $\mathbf{u}\in(\frac{D''(a_{4})}{6},+\infty)$ and the $e_i$'s being ordered as in \eqref{eq:Weierstrass_factorization}, we deduce \eqref{eq:correspondence_ai_ei}.

\medskip
 
We now move to the proof of the formulas \eqref{eq:expression_omega_1_omega_2} for the periods. To that purpose, we perform in \eqref{eq:expression_periods_WW} the variable change \eqref{eq:variable_change}. We first assume that $a_{4}\neq [1\!:\!0]$. A straightforward computation shows that  
\begin{equation}\label{eq5}
D(x)=z^{2}=v^{2}\frac{(x-a_{4})^{4}}{4D'(a_{4})^{2}}=\frac{(x-a_{4})^{4}}{4D'(a_{4})^{2}}(4\mathbf{u}^{3}-g_{2}\mathbf{u}-g_{3}).
\end{equation}
In particular $\sqrt{4\mathbf{u}^{3}-g_{2}\mathbf{u}-g_{3}}=\frac{2D'(a_{4})}{(x-a_{4})^{2}}\sqrt{D(x)}$.
 Therefore with \eqref{eq:expression_periods_WW} $d\mathbf{u}=\frac{-D'(a_{4})}{(x-a_{4})^{2}}dx$, and with \eqref{eq:correspondence_ai_ei} we find
\begin{equation*}
 \omega_2=2\int_{\wp (\frac{\omega_2}{2})}^{+\infty}\frac{d\mathbf{u}}{\sqrt{4\mathbf{u}^{3}-g_{2}\mathbf{u}-g_{3}}}
 =\int_{a_{4}}^{a_1}\frac{dx}{\sqrt{D(x)}}.
\end{equation*}
 Similarly, $-4\mathbf{u}^{3}+g_{2}\mathbf{u}+g_{3}$ is positive for $\mathbf{u}\in(-\infty,e_{3})$ and 
\begin{equation*}
 \omega_1=2\mathbf{i}\int_{-\infty}^{\wp (\frac{\omega_1}{2})}\frac{d\mathbf{u}}{\sqrt{-4\mathbf{u}^{3}+g_{2}\mathbf{u}+g_{3}}}=2\mathbf{i}\int_{-\infty}^{\wp (\frac{\omega_1}{2})}\frac{d\mathbf{u}}{\sqrt{\vert 4\mathbf{u}^{3}-g_{2}\mathbf{u}-g_{3}\vert }}=\mathbf{i}\int_{a_3}^{a_{4}}\frac{dx}{\sqrt{\vert D(x)\vert }}.
\end{equation*}
The computations are very similar in the case $a_{4}= [1\!:\!0]$ and we omit them.
\end{proof}

\subsection{Further properties of the uniformization}
We start by studying the real and non-real points of $x(\omega)$ and $y(\omega)$.
\begin{lemma}
\label{lem:real_points}
The following holds:
\begin{itemize}
     \item $\omega\in\{\frac{\omega_{1}}{2}\Z+\omega_{2}\R\} \Longrightarrow x(\omega),y(\omega)\in \mathbb{P}^{1}(\R)$ (dashed line on Figure \ref{fig:The_fund_par});
     \item $\omega\in \{\omega_{1}\R+\frac{\omega_{2}}{2}\Z\}\Longrightarrow x(\omega)\in \mathbb{P}^{1}(\R)$ (dotted line on Figure \ref{fig:The_fund_par});
     \item $\omega\in \{\omega_{1}\R+\frac{\omega_{2}}{2}\Z\}\setminus \{\frac{\omega_{1}}{2}\Z+\frac{\omega_{2}}{2}\Z \}\Longrightarrow y(\omega)\notin \R$.
\end{itemize}
\end{lemma}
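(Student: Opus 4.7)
The whole argument rests on the observation that $\omega_{1}\in \mathbf{i}\R_{>0}$ and $\omega_{2}\in \R_{>0}$ by Proposition~\ref{prop:uniformization}, so the lattice $\Z\omega_{1}+\Z\omega_{2}$ is stable under complex conjugation. Taking the complex conjugate of each term of the series~\eqref{eq:expression_wp_expanded} (and of its derivative) immediately yields the basic reality relations
\[
\overline{\wp(\omega)}=\wp(\bar\omega),\qquad \overline{\wp'(\omega)}=\wp'(\bar\omega).
\]
I will use them on each of the two families of lines.

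First consider $\omega=\tfrac{k\omega_{1}}{2}+s\omega_{2}$ with $k\in\Z$ and $s\in\R$. Then $\bar\omega=-\tfrac{k\omega_{1}}{2}+s\omega_{2}=\omega-k\omega_{1}$, hence by $(\omega_{1},\omega_{2})$-periodicity $\wp(\bar\omega)=\wp(\omega)$ and $\wp'(\bar\omega)=\wp'(\omega)$; so both $\wp(\omega)$ and $\wp'(\omega)$ are real. Next consider $\omega=s\omega_{1}+\tfrac{k\omega_{2}}{2}$ with $k\in\Z$ and $s\in\R$. Then $\bar\omega=-\omega+k\omega_{2}$; using the periodicity together with the fact that $\wp$ is even and $\wp'$ is odd, one finds $\wp(\bar\omega)=\wp(\omega)\in\R$ while $\wp'(\bar\omega)=-\wp'(\omega)\in\mathbf{i}\R$.

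Now I transfer these statements to $x$ and $y$ via Proposition~\ref{prop:uniformization}. The formulas given there express $x(\omega)$ as a rational function with real coefficients of $\wp(\omega)$, and
\[
z(\omega)=2A(x(\omega))\,y(\omega)+B(x(\omega))
\]
as the product of $\wp'(\omega)$ and a real rational function of $\wp(\omega)$. Consequently:
for the first bullet, $\wp(\omega),\wp'(\omega)\in\R$ give $x(\omega)\in\PX^{1}(\R)$ and $z(\omega)\in\R$, whence $y(\omega)=(z-B)/(2A)\in\PX^{1}(\R)$; for the second bullet, $\wp(\omega)\in\R$ suffices to give $x(\omega)\in\PX^{1}(\R)$; for the third bullet, the zeros of $\wp'$ modulo the lattice are exactly the three nontrivial half-periods $\tfrac{\omega_{1}}{2},\tfrac{\omega_{2}}{2},\tfrac{\omega_{1}+\omega_{2}}{2}$, hence excluding $\tfrac{\omega_{1}}{2}\Z+\tfrac{\omega_{2}}{2}\Z$ ensures $\wp'(\omega)\neq 0$ and so $z(\omega)\in \mathbf{i}\R\setminus\{0\}$, while $A(x(\omega))$ and $B(x(\omega))$ remain real, giving $y(\omega)=(z-B)/(2A)\notin\R$.

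The main obstacle lies in the formula $y=(z-B)/(2A)$ when $A(x(\omega))$ vanishes. In that situation the kernel equation $Ay^{2}+By+C=0$ reduces to $By+C=0$, so that $y=-C/B\in\R$ as long as $B(x(\omega))\neq 0$; the case $A=B=0$ at a point of $\Etproj$ would force $C=0$ also, contradicting the smoothness of the elliptic curve guaranteed by Proposition~\ref{prop:genuscurvewalk}. This does not affect the first two items. For the third item, the locus $\{A(x(\omega))=0\}$ is finite on a fundamental domain, and at such isolated points one falls back on the symmetric formula for $y(\omega)$ to be recorded in~\eqref{eq6}, which makes the non-reality of $y(\omega)$ still visible from $\wp'(\omega)\in\mathbf{i}\R\setminus\{0\}$. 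This residual verification is the only nonformal step.
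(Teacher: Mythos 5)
Your proof is correct and follows essentially the same route as the paper: the reality relations $\overline{\wp(\omega)}=\wp(\overline\omega)$, $\overline{\wp'(\omega)}=\wp'(\overline\omega)$ combined with the parity and $(\omega_1,\omega_2)$-periodicity of $\wp,\wp'$, then transfer to $x,z,y$ via the real-coefficient uniformization of Proposition~\ref{prop:uniformization}. One remark on your closing paragraph about the locus $\{A(x(\omega))=0\}$: for the third item this case is in fact vacuous, because $x(\omega)\in\P1(\R)$ forces $A(x(\omega)),B(x(\omega))\in\R$, so $A(x(\omega))=0$ would give $z(\omega)=B(x(\omega))\in\R$, contradicting $z(\omega)\in\mathbf{i}\R^{*}$; hence there is no need to invoke the symmetric formula \eqref{eq6}, which in any case is established only afterwards (in Lemma~\ref{lem:correspondence_ai_ei}) and whose use here risks making the exposition appear circular. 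For the first item your observation that $A=0$ leads to $y=-C/B\in\P1(\R)$ is correct, and this is the only place the $A=0$ discussion is genuinely needed.
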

     \unitlength=0.52cm
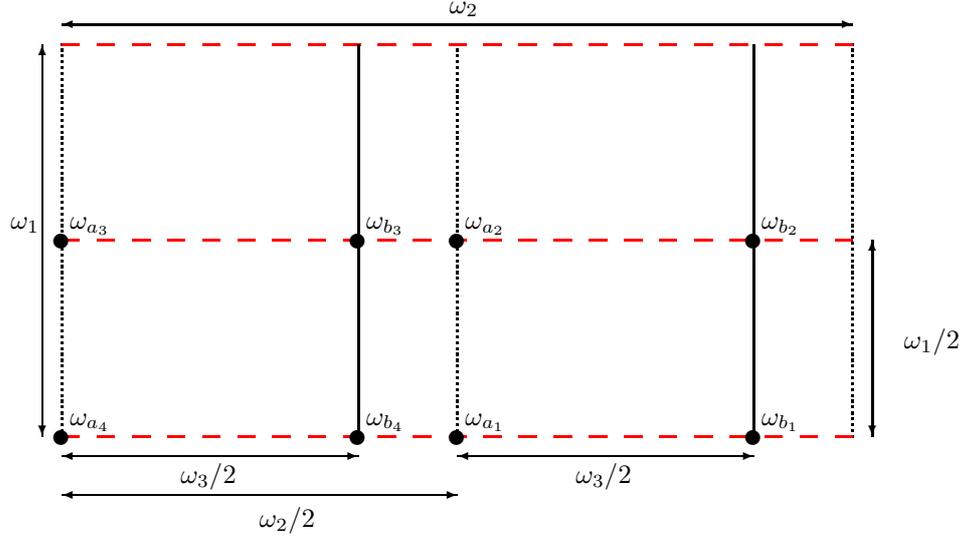
\begin{figure}[t]
    \vspace{5mm}
  \begin{center}
\begin{tabular}{cccc}
    \hspace{-7.3cm}

 \begin{tikzpicture}(19.55,6.5)
           \linethickness{2mm}
 \draw[draw=none,fill=black!00] (-5,5.2) -- (5.42,5.2) -- (5.42,0) -- (-5,0) -- cycle;
     \end{tikzpicture}

\hspace{-33mm}\begin{picture}(0,0)(0,0)
\thicklines
\dottedline{0.15}(-10,0)(-10,10)
\dottedline{0.15}(0,0)(0,10)
\dottedline{0.15}(10,0)(10,10)
\textcolor{red}{\dashline[30]{0.5}(-10,0)(10,0)
\dashline[30]{0.5}(-10,5)(10,5)
\dashline[30]{0.5}(-10,10)(10,10)}
\thinlines
\put(-2.5,0){\line(0,1){10}}
\put(7.5,0){\line(0,1){10}}
\put(-10.27,-0.27){{\LARGE$\bullet$}}
\put(-2.77,-0.27){{\LARGE$\bullet$}}
\put(-0.27,-0.27){{\LARGE$\bullet$}}
\put(7.23,-0.27){{\LARGE$\bullet$}}
\put(-10.27,4.73){{\LARGE$\bullet$}}
\put(-2.77,4.73){{\LARGE$\bullet$}}
\put(-0.27,4.73){{\LARGE$\bullet$}}
\put(7.23,4.73){{\LARGE$\bullet$}}
\put(-9.8,0.3){{$\omega_{a_{4}}$}}
\put(-2.3,0.3){{$\omega_{b_{4}}$}}
\put(0.2,0.3){{$\omega_{a_{1}}$}}
\put(7.7,0.3){{$\omega_{b_{1}}$}}
\put(-9.8,5.3){{$\omega_{a_{3}}$}}
\put(-2.3,5.3){{$\omega_{b_{3}}$}}
\put(0.2,5.3){{$\omega_{a_{2}}$}}
\put(7.7,5.3){{$\omega_{b_{2}}$}}
\put(-2.5,-0.5){\vector(-1,0){7.5}}
\put(-10,-0.5){\vector(1,0){7.5}}
\put(-7,-1.2){{$\omega_{3}/2$}}
\put(0,-0.5){\vector(1,0){7.5}}
\put(7.5,-0.5){\vector(-1,0){7.5}}
\put(3,-1.2){{$\omega_{3}/2$}}
\put(-10,10.5){\vector(1,0){20}}
\put(10,10.5){\vector(-1,0){20}}
\put(-0.2,10.8){{$\omega_{2}$}}
\put(-10.5,0){\vector(0,1){10}}
\put(-10.5,10){\vector(0,-1){10}}
\put(-11.3,5.3){{$\omega_{1}$}}

\put(10.5,0){\vector(0,1){5}}
\put(10.5,5){\vector(0,-1){5}}
\put(11.3,2.25){{$\omega_{1}/2$}}

\put(-10,-1.5){\vector(1,0){10}}
\put(0,-1.5){\vector(-1,0){10}}
\put(-5,-2.3){{$\omega_{2}/2$}}
\end{picture}
    \end{tabular}
  \end{center}
  \vspace{8mm}
\caption{Real points of $x(\omega)$ and $y(\omega)$ on the fundamental parallelogram. Here $\Lambda(\omega_{a_{i}})=(a_{i},Y_{\pm}(a_{i}))$ and $\Lambda(\omega_{b_{j}})=(X_{\pm}(b_{j}),b_{j})$}
\label{fig:The_fund_par}
\end{figure}

\begin{proof}
The proof follows from the (well-known) location of the real points of $\wp,\wp'$ on the fundamental parallelogram, and the location of the purely imaginary points of $\wp'$ when one period is real and the other one purely imaginary. In particular it is known \cite[Section 20.32, Example 2]{WW} that $\wp$ is real on the perimeter of the fundamental parallelogram (and on the half-perimeter as well).

Let $\overline{\omega}$ denote the complex conjugate number of $\omega\in \C$. Since by Proposition \ref{prop:uniformization} the period $\omega_{1}$ is purely imaginary and $\omega_{2}$ is real, we have for all $\omega\in \C$
\begin{equation}
\label{eq:conjug}
     \overline{\wp(\omega)}=\wp(\overline{\omega})\quad 
     \text{and} \quad \overline{\wp'(\omega)}=\wp'(\overline{\omega}),
\end{equation}
see \eqref{eq:expression_wp_expanded}. Moreover, once again by \eqref{eq:expression_wp_expanded}, we have
\begin{equation}
\label{eq:parity_Weierstrass}
     \wp(-\omega)=\wp(\omega)\quad\text{and} \quad \wp' (-\omega)=-\wp' (\omega).
\end{equation}

Let $\omega\in\frac{\omega_{1}}{2}\Z+\omega_{2}\R$. With \eqref{eq:parity_Weierstrass} and the $(\omega_{1},\omega_{2})$-periodicity, we get that
$\overline{\wp(\omega)}=\wp(\overline{\omega})=\wp(\omega)$ and $\overline{\wp'(\omega)}=\wp'(\overline{\omega})=\wp'(\omega)$. This shows that  $\wp(\omega),\wp'(\omega)\in \mathbb{P}^{1}(\R)$, and thereby proves the first item of Lemma \ref{lem:real_points}.

Let $\omega\in \omega_{1}\R+\frac{\omega_{2}}{2}\Z$. Using \eqref{eq:parity_Weierstrass} and the $(\omega_{1},\omega_{2})$-ellipticity, we get
$\overline{\wp(\omega)}=\wp(\overline{\omega})=\wp(-\omega)=\wp(\omega)$ and $\overline{\wp'(\omega)}=\wp'(\overline{\omega})=\wp'(-\omega)=-\wp'(\omega)$. As a consequence, $\wp(\omega)\in \mathbb{P}^{1}(\R)$ and $\wp'(\omega)\in \mathbf{i}\R\bigcup [1\!:\!0]$, and thus the second item of Lemma \ref{lem:real_points} is proved.

Using Proposition \ref{prop:uniformization}, we deduce that $x(\omega)\in \mathbb{P}^{1}(\R)$ for $\omega\in \{\omega_{1}\R+\frac{\omega_{2}}{2}\Z\}\bigcup\{\frac{\omega_{1}}{2}\Z+\omega_{2}\R\} $. Let us remind that the three zeros of $\wp'$ modulo $\omega_{1}\Z+\omega_{2}\Z$ are $\frac{\omega_{1}}{2},\frac{\omega_{2}}{2},\frac{\omega_{1}+\omega_{2}}{2}$  and that its unique (triple) pole is at $0$. So $\wp'(\omega)\in \mathbf{i}\R^{*}$ for $\omega$ that belongs to $\{\omega_{1}\R+\frac{\omega_{2}}{2}\Z\}\setminus \{\frac{\omega_{1}}{2}\Z+\frac{\omega_{2}}{2}\Z \}$ (third item of Lemma \ref{lem:real_points}). We then conclude with Lemma~\ref{prop:uniformization} that $\omega\in \{\frac{\omega_{1}}{2}\Z+\omega_{2}\R\}$ implies $z(\omega)\in \mathbb{P}^{1}(\R)$, and  $\omega\in \{\omega_{1}\R+\frac{\omega_{2}}{2}\Z\}\setminus \{\frac{\omega_{1}}{2}\Z+\frac{\omega_{2}}{2}\Z \}$ yields $z(\omega)\notin \mathbb{P}^{1}(\R)$. The result on $y(\omega)$ follows by combining the results on $x(\omega)$ and $z(\omega)$.
\end{proof}

Recall that $\omega_3$ is introduced in \eqref{eq:expression_group_universal_cover}. The statement hereafter is illustrated on Figure \ref{fig:The_fund_par}.
\begin{lemma}
\label{lem:correspondence_ai_ei}
The following holds:
\begin{itemize}
 \item $(x(0),y(0))=(a_{4},Y_{\pm}(a_{4}))$;
     \item $(x(\frac{\omega_{1}}{2}),y(\frac{\omega_{1}}{2}))=(a_{3},Y_{\pm}(a_{3}))$;
     \item $(x(\frac{\omega_{1}+\omega_{2}}{2}),y(\frac{\omega_{1}+\omega_{2}}{2}))=(a_{2},Y_{\pm}(a_{2}))$;
     \item $(x(\frac{\omega_{2}}{2}),y(\frac{\omega_{2}}{2}))=(a_{1},Y_{\pm}(a_{1}))$;
     \item $(x(\frac{\omega_{3}}{2}),y(\frac{\omega_{3}}{2}))=(X_{\pm}(b_{4}),b_{4})$;
     \item $(x(\frac{\omega_{1}+\omega_{3}}{2}),y(\frac{\omega_{1}+\omega_{3}}{2}))=(X_{\pm}(b_{3}),b_{3})$;
     \item$(x(\frac{\omega_{1}+\omega_{2}+\omega_{3}}{2}),y(\frac{\omega_{1}+\omega_{2}+\omega_{3}}{2}))=(X_{\pm}(b_{2}),b_{2})$;
     \item $(x(\frac{\omega_{2}+\omega_{3}}{2}),y(\frac{\omega_{2}+\omega_{3}}{2}))=(X_{\pm}(b_{1}),b_{1})$.
\end{itemize}
\end{lemma}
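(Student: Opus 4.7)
The first four items are essentially already proved. Indeed, the equality $x(0)=a_{4}$ and the three relations $x(\omega_{1}/2)=a_{3}$, $x((\omega_{1}+\omega_{2})/2)=a_{2}$, $x(\omega_{2}/2)=a_{1}$ are exactly the content of \eqref{eq:correspondence_ai_ei}, which has been established during the proof of Proposition \ref{prop:uniformization}. To complete the first four items, I only need to identify the $y$-coordinates. At each of the four half-lattice points $0,\omega_{1}/2,\omega_{2}/2,(\omega_{1}+\omega_{2})/2$, which are precisely the fixed points of $\iup_{1}(\omega)=-\omega$ modulo $\Z\omega_{1}+\Z\omega_{2}$, the corresponding point on $\Etproj$ is fixed by $\iota_{1}$, hence the two roots $Y_{+}(a_{i})$ and $Y_{-}(a_{i})$ of $y\mapsto\overline{K}(a_{i},y;t)$ coincide (the discriminant $D(a_{i})$ vanishes). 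Therefore the $y$-coordinate equals $Y_{\pm}(a_{i})$, as claimed.

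For the four remaining items, the key observation is that $\omega_{3}/2,(\omega_{1}+\omega_{3})/2,(\omega_{2}+\omega_{3})/2,(\omega_{1}+\omega_{2}+\omega_{3})/2$ are exactly the four fixed points of $\iup_{2}(\omega)=-\omega+\omega_{3}$ modulo the lattice. Through the commutative diagram $\Lambda\circ\iup_{2}=\iota_{2}\circ\Lambda$, they are sent to the four fixed points of $\iota_{2}$ on $\Etproj$. But $\iota_{2}$ permutes the two $x$-values associated with a given $y$, so its fixed points are precisely those $(x,y)\in\Etproj$ for which $x\mapsto\overline{K}(x,y;t)$ has a double root, i.e., for which $y$ is a root of $\Delta^{y}_{[y_{0}:y_{1}]}$. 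Hence, as unordered sets,
\begin{equation*}
\Lambda\left\{\tfrac{\omega_{3}}{2},\tfrac{\omega_{1}+\omega_{3}}{2},\tfrac{\omega_{2}+\omega_{3}}{2},\tfrac{\omega_{1}+\omega_{2}+\omega_{3}}{2}\right\}=\bigl\{(X_{\pm}(b_{i}),b_{i}):i=1,2,3,4\bigr\}.
\end{equation*}

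The main obstacle is then to match each of the four $\omega$-values with the correct index $i$. I would do this by symmetry: repeat the whole construction of Proposition~\ref{prop:uniformization} but starting from $\overline{K}(x,y;t)=0$ rewritten as a quadratic in $x$ with discriminant $E(y)=\Delta^{y}_{[y:1]}$. This yields a second Weierstrass uniformization $\omega'\mapsto(x'(\omega'),y'(\omega'))$ of the \emph{same} elliptic curve, with a formula for $y'(\omega')$ in terms of $\wp$ identical to the one for $x(\omega)$ but with $b_{4},\ldots,b_{1}$ replacing $a_{4},\ldots,a_{1}$. Applying the argument already used for \eqref{eq:correspondence_ai_ei} gives $y'(0)=b_{4}$, $y'(\omega_{1}/2)=b_{3}$, $y'((\omega_{1}+\omega_{2})/2)=b_{2}$, $y'(\omega_{2}/2)=b_{1}$ (after checking that the second uniformization can be taken to have the same periods $\omega_{1},\omega_{2}$, up to a unimodular change which permutes nothing in the present ordering; the signs of the discriminants' leading coefficients ensure compatibility of orderings by Theorem~\ref{thm:disczeroes}). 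Since both uniformizations parametrize the same curve and their respective $\iup_{1}$ lifts are both $\omega\mapsto-\omega$ (after the normalization $\omega_{4}=0$), the two parameters differ by a translation: $\omega'=\omega-\omega_{3}/2$ modulo the lattice. Substituting this translation into $y'(0)=b_{4}$, $y'(\omega_{1}/2)=b_{3}$, etc., yields precisely the four asserted identities $y(\omega_{3}/2)=b_{4}$, $y((\omega_{1}+\omega_{3})/2)=b_{3}$, $y((\omega_{1}+\omega_{2}+\omega_{3})/2)=b_{2}$, $y((\omega_{2}+\omega_{3})/2)=b_{1}$, and the companion $x$-values are then $X_{\pm}(b_{i})$ as explained above. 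Finally, the agreement of the two parametrizations up to the translation $\omega_{3}/2$ is compatible with (and in fact will be used to justify) the explicit formula for $\omega_{3}$ announced in \eqref{eq:expression_omega_3}.
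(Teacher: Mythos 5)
Your overall strategy matches the paper's: you dispatch the first four items via \eqref{eq:correspondence_ai_ei} and the vanishing of $D(a_i)$, then for the last four you invoke the fixed-point set of $\iup_2$ together with a second (``$y$-centered'') uniformization. This is precisely the idea behind the symmetric expression \eqref{eq6}, which the paper simply quotes from \cite{KurkRasch} with the sentence ``The result follows''; you are thus reconstructing the derivation the paper takes for granted. Two substantive issues remain in that reconstruction.

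First, your pivotal sentence asserts that ``their respective $\iup_1$ lifts are both $\omega\mapsto-\omega$.'' That is not the right normalization for the second uniformization: when you build the parametrization from the $y$-quadratic (branch points $b_i$), what is naturally normalized to $\omega'\mapsto-\omega'$ is $\iup_2$, not $\iup_1$, since the $b_i$ are the branch points where $X_+=X_-$, i.e., the fixed points of $\iota_2$. If both lifts of $\iup_1$ were $\omega\mapsto-\omega$, the change of parameter would be by a lattice half-period, never producing $\omega_3/2$. The argument you want is: the second parametrization has $\iup_2(\omega')=-\omega'$, so writing $\omega=\omega'+c$ and comparing with $\iup_2(\omega)=-\omega+\omega_3$ forces $2c\equiv\omega_3$ modulo $\Z\omega_1+\Z\omega_2$.

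Second, that congruence only pins $c$ down to $\omega_3/2$ plus an arbitrary half-period, and you still need to show that the labeling produced by the second parametrization (i.e., that its origin corresponds to $b_4$, its $\omega_1/2$-point to $b_3$, etc., using the sign analysis of Theorem~\ref{thm:disczeroes} exactly as in the proof of \eqref{eq:correspondence_ai_ei}) combines with the correct choice of $c$ to give the stated matching. You acknowledge this (``after checking that \ldots'') but do not carry it out, and it is the place where real work is hidden. Since the paper itself discharges this step by citing \cite[Equation (3.3)]{KurkRasch}, your sketch is not less rigorous than the paper's prose, but you should either carry out the verification or, as the paper does, state the symmetric formula for $y(\omega)$ explicitly and cite its source.
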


\begin{proof}
The statements for the $a_{i}$'s have been shown in the proof of Proposition \ref{prop:uniformization}. Those concerning the $b_i$'s are a priori unclear, as the coordinates $x(\omega)$ and $y(\omega)$ do not play a symmetric role in Proposition~\ref{prop:uniformization}. However, using the exact same ideas as in \cite[Equation (3.3)]{KurkRasch}, we can rewrite the formula $y(\omega)$ in Proposition \ref{prop:uniformization} more symmetrically, as follows:
\begin{equation}\label{eq6}
          y(\omega)=\left\{\begin{array}{ll}
     \displaystyle b_{4}+\frac{E'(b_{4})}{\wp(\omega-\omega_3/2)-\frac{E''(b_{4})}{6}} & \text{when } b_{4}\neq [1\!:\!0],\medskip\\
     \displaystyle \frac{\wp(\omega-\omega_3/2)-\beta_{2}/3}{\beta_{3}} & \text{otherwise},
     \end{array}\right.
\end{equation}
with the help of the notation \eqref{eq:expression_D_0}.
The result follows. 
\end{proof}

Let $\PPP$ be the counterclockwise oriented boundary of the half-parallelogram with vertices $0,\frac{\omega_{2}}{2}$, $\frac{\omega_{1}+\omega_{2}}{2},\frac{\omega_{1}}{2}$, i.e., the union of the four segments $[0,\omega_{2}/2]$, $[\omega_{2}/2,(\omega_{1}+\omega_{2})/2]$, $[(\omega_{1}+\omega_{2})/2,\omega_{1}/2]$ and $[\omega_{1}/2,0]$, see the left display on Figure \ref{fig:half_parallelogram}.
\begin{lemma}
\label{lem:half-parallelogram}
The function $\omega\mapsto x(\omega)$ is continuous and one-to-one from $\PPP$ to $\mathbb{P}^{1}(\R)$.
 \end{lemma}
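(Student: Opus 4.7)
The strategy is to reduce the statement to a classical property of the Weierstrass $\wp$ function on the boundary of a fundamental parallelogram, exploiting the fact that by Proposition~\ref{prop:uniformization} the map $x(\omega)$ is a Möbius transformation of $\wp(\omega)$ (in both cases $a_4\neq[1\!:\!0]$ and $a_4=[1\!:\!0]$).

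First, I would check that $x(\PPP)\subset\mathbb{P}^{1}(\R)$ and that the map is continuous. Continuity is immediate since $x$ is meromorphic in $\omega$ and takes values in $\mathbb{P}^{1}(\C)$. For the reality, I would observe that each of the four segments making up $\PPP$ lies in one of the two distinguished families of lines appearing in Lemma~\ref{lem:real_points}: the horizontal segments $[0,\omega_{2}/2]$ and $[(\omega_{1}+\omega_{2})/2,\omega_{1}/2]$ lie in $\frac{\omega_{1}}{2}\Z+\omega_{2}\R$, while the vertical segments $[\omega_{2}/2,(\omega_{1}+\omega_{2})/2]$ and $[\omega_{1}/2,0]$ lie in $\omega_{1}\R+\frac{\omega_{2}}{2}\Z$. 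In both cases Lemma~\ref{lem:real_points} gives $x(\omega)\in\mathbb{P}^{1}(\R)$.

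For injectivity, since $x$ is a Möbius transformation of $\wp$ (and Möbius transformations are injective on $\mathbb{P}^{1}(\C)$), it suffices to prove that $\wp$ is injective on $\PPP$. Here I would invoke the classical description of $\wp$ on the half-perimeter of the fundamental parallelogram when $\omega_{2}\in\R_{>0}$ and $\omega_{1}\in\mathbf{i}\R_{>0}$, see for instance \cite[Section 20.32, Example 2]{WW}: the function $\wp$ is real on $\PPP$ and strictly monotonic on each of its four sides, interpolating between the values $+\infty,e_{1},e_{2},e_{3},-\infty$ in that order (recall \eqref{eq:Weierstrass_factorization}). Consequently $\wp$ sends $\PPP$ continuously and bijectively onto $\mathbb{P}^{1}(\R)$ viewed as the one-point compactification traversed once, and the same follows for $x$. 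Matching the endpoints with Lemma~\ref{lem:correspondence_ai_ei} then gives the ordering $a_{4},a_{1},a_{2},a_{3}$ of the image, which is consistent with Remark~\ref{rem:ordering}.

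The main potential obstacle is the monotonicity of $\wp$ along the four sides together with the passage through $[1\!:\!0]$: on the segment $[\omega_{1}/2,0]$ the function $\wp$ has a pole at the endpoint $0$, so the statement must be understood in $\mathbb{P}^{1}(\R)$, and one must check that the Möbius composition sending $\wp$ to $x$ also behaves correctly at this pole in both subcases of Proposition~\ref{prop:uniformization} (in particular when $a_{4}=[1\!:\!0]$, where the ``pole at $0$'' of $\wp$ is precisely what produces the branch point $a_{4}$ at infinity). This is a routine verification from the explicit formulas of Proposition~\ref{prop:uniformization}, but requires treating the two cases separately.
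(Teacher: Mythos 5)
Your proposal is correct and takes essentially the same approach as the paper: reduce to the well-known fact that $\wp$ is a continuous bijection from $\PPP$ onto $\mathbb{P}^{1}(\R)$, then transfer this to $x(\omega)$ via the Möbius transformation of Proposition~\ref{prop:uniformization}. The only cosmetic difference is that the paper justifies strict monotonicity of $\wp$ along $\PPP$ by the order-two argument (a non-injective real elliptic function of order $2$ would take some value more than twice), whereas you invoke the classical description from \cite[Section 20.32]{WW} directly; both are standard and equivalent in content.
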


\begin{proof}
It is most well known that $\wp$ is one-to-one from the boundary $\PPP$ of the half-parallelogram to $\mathbb{P}^{1}(\R)$. Indeed $\wp$ is real on $\PPP$ (Example 2 in \cite[20.32]{WW}) and goes from $+\infty$ to $-\infty$ when $\PPP$ is oriented counterclockwise. If $\wp$ was not strictly decreasing along $\PPP$ this would contradict the fact that $\wp$ has order $2$ (i.e., the fact that $\wp$ takes each value of $\mathbb{P}^{1}(\C)$ twice within a fundamental parallelogram).

To conclude, let us notice that $x(\omega)$ is a fractional linear transform with real coefficients of $\wp(\omega)$ (see Proposition~\ref{prop:uniformization}), thus the same result holds for $x$, thereby proving Lemma \ref{lem:half-parallelogram}.
\end{proof}

The following remark describes the behavior of $x(\omega)$ in the half-parallelogram; see also Figure~\ref{fig:half_parallelogram}.
\begin{rem}
\label{rem1}
Remind, see Remark \ref{rem:ordering}, that we have ordered the $a_{i}$'s in such a way that the cycle of $\mathbb{P}^{1}(\R)$ starting from $-1$ and going to $+\infty$, and then from $-\infty$ to $-1$, crosses the $a_{i}$ in the order $a_{1},a_{2},a_{3},a_{4}$, see Figure \ref{fig:ordering}. Remind also, see Theorem \ref{thm:disczeroes}, that $-1<a_{1}<  a_{2}<1$ and $1< \vert a_{3}\vert,\vert a_{4}\vert$.

The following is a byproduct of the proof of Lemma \ref{lem:half-parallelogram}: using the monotonicity of $x(\omega)$ along $\PPP$ shown in Lemma \ref{lem:half-parallelogram}, we have proven that for all $\omega\in [\omega_{2}/2,(\omega_{1}+\omega_{2})/2]$, $x(\omega)\in [a_{1},a_{2}]$ and then $\vert x(\omega)\vert<1$. Similarly, we have also proven that for all $\omega\in [0,\omega_{1}/2]$, $\vert x(\omega)\vert>1$. With the same arguments,  for all $\omega\in [\omega_{1}/2,(\omega_{1}+\omega_{2})/2]$, $x(\omega)\in [a_{2},a_{3}]$, and then there exists $\omega_{0}\in (\omega_{1}/2,(\omega_{1}+\omega_{2})/2)$ such that $x(\omega_{0})=1$, see Figure \ref{fig:half_parallelogram}. Similarly,  there exists $\omega_{0}'\in (0,\omega_{2}/2)$ such that $x(\omega_{0}')=-1$.

By \eqref{eq6}, a similar statement holds true for $y(\omega)$: for all $\omega\in [(\omega_{2}+\omega_{3})/2,(\omega_{1}+\omega_{2}+\omega_{3})/2]$, $\vert y(\omega)\vert<1$, and for all $\omega\in [\omega_{3}/2,(\omega_{1}+\omega_{3})/2]$, $\vert y(\omega)\vert>1$. 
\end{rem}

\unitlength=0.5cm
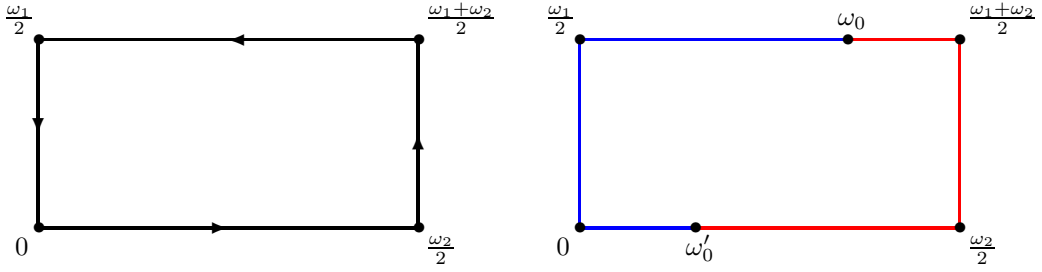
\begin{figure}[t]
    \vspace{29mm}
  \begin{center}\hspace{-60mm}
\begin{picture}(0,0)(0,0)
\thicklines
\put(0,0){\vector(1,0){5}}
\put(5,0){\line(1,0){5}}
\put(10,5){\vector(-1,0){5}}
\put(5,5){\line(-1,0){5}}
\put(0,5){\vector(0,-1){2.5}}
\put(0,2.5){\line(0,-1){2.5}}
\put(10,0){\vector(0,1){2.5}}
\put(10,2.5){\line(0,1){2.5}}
\put(-0.15,-0.15){$\bullet$}
\put(9.85,-0.15){$\bullet$}
\put(-0.15,4.85){$\bullet$}
\put(9.85,4.85){$\bullet$}
\put(-0.6,-0.7){$0$}
\put(10.2,-0.7){$\frac{\omega_2}{2}$}
\put(10.2,5.4){$\frac{\omega_1+\omega_2}{2}$}
\put(-0.9,5.4){$\frac{\omega_1}{2}$}
\end{picture}\hspace{70mm}
\begin{picture}(0,0)(0,0)
\thicklines
\put(0,0){\textcolor{blue}{\line(1,0){3}}}
\put(3,0){\textcolor{red}{\line(1,0){7}}}
\put(10,5){\textcolor{red}{\line(-1,0){3}}}
\put(7,5){\textcolor{blue}{\line(-1,0){7}}}
\put(0,5){\textcolor{blue}{\line(0,-1){5}}}
\put(10,0){\textcolor{red}{\line(0,1){5}}}
\put(-0.15,-0.15){$\bullet$}
\put(2.9,-0.15){$\bullet$}
\put(6.9,4.85){$\bullet$}
\put(9.85,-0.15){$\bullet$}
\put(-0.15,4.85){$\bullet$}
\put(9.85,4.85){$\bullet$}
\put(-0.6,-0.7){$0$}
\put(2.8,-0.7){$\omega_0'$}
\put(10.2,-0.7){$\frac{\omega_2}{2}$}
\put(6.8,5.4){$\omega_0$}
\put(10.2,5.4){$\frac{\omega_1+\omega_2}{2}$}
\put(-0.9,5.4){$\frac{\omega_1}{2}$}
\end{picture}
  \end{center}
  \vspace{5mm}
\caption{Left: orientation of the half-parallelogram $\PPP$. Right: location of $\omega_0$ and $\omega_0'$ on the half-parallelogram, such that $x(\omega_0)=1$ and $x(\omega_0')=-1$. On the blue part of $\PPP$, one has $\vert x(\omega)\vert\geq 1$ and on the red part, $\vert x(\omega)\vert\leq 1$}
\label{fig:half_parallelogram}
\end{figure}

We now compute an explicit expression for $\omega_{3}$.
\begin{lemma}
\label{lem:formula_omega_3}
One has
\begin{equation}
\label{eq:expression_omega_3}
     \omega_{3}=\int_{a_{4}}^{X_{\pm}(b_{4})} \frac{dx}{\sqrt{D(x)}}\in (0,\omega_{2}).
\end{equation}     
\end{lemma}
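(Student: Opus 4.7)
The plan is to combine Lemma \ref{lem:correspondence_ai_ei}, Lemma \ref{lem:real_points} and the change-of-variable computation already performed in the proof of Proposition \ref{prop:uniformization}. The strategy splits naturally into three steps.

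\textbf{Step 1 (locating $\omega_3/2$ in the fundamental parallelogram).} By Lemma \ref{lem:correspondence_ai_ei}, $\Lambda(\omega_3/2)=(X_{\pm}(b_4),b_4)$. Since $b_4$ is a simple zero of $\Delta^y_{[y_0:y_1]}$ lying in $\mathbb{P}^1(\mathbb{R})$ by Theorem \ref{thm:disczeroes}, and since $b_4$ is a branch point (so $X_+(b_4)=X_-(b_4)$), the value $X_{\pm}(b_4)$ is also real. Thus both coordinates of $\Lambda(\omega_3/2)$ lie in $\mathbb{P}^1(\mathbb{R})$, and Lemma \ref{lem:real_points} forces $\omega_3/2\in\frac{\omega_1}{2}\mathbb{Z}+\omega_2\mathbb{R}$ modulo the lattice. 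Combined with our normalization $\omega_3\in[0,\omega_1)+[0,\omega_2)$ and the $\mathbb{R}$-linear independence of $\omega_1\in\mathbf{i}\mathbb{R}_{>0}$ and $\omega_2\in\mathbb{R}_{>0}$, I deduce that $\omega_3$ is real and lies in $[0,\omega_2)$.

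\textbf{Step 2 (ruling out $\omega_3=0$).} If $\omega_3=0$, then the lifts $\iup_1$ and $\iup_2$ would both equal $\omega\mapsto -\omega$, so $\iota_1=\iota_2$ on $\Etproj$. But $\iota_1$ preserves the $x$-coordinate and $\iota_2$ preserves the $y$-coordinate, so their equality would force every point of $\Etproj$ to be a common fixed point of both involutions; this is impossible on the smooth elliptic curve $\Etproj$ under Assumption \ref{assumption}, since each involution has only finitely many fixed points (the branch points $a_i$, respectively $b_j$).

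\textbf{Step 3 (the integral formula).} Here I reuse the change of variable from the proof of Proposition \ref{prop:uniformization}. In the case $a_4\neq[1\!:\!0]$, differentiating $x(\omega)=a_4+D'(a_4)/(\wp(\omega)-D''(a_4)/6)$ and using the Weierstrass identity together with \eqref{eq5} yields $x'(\omega)=\mp 2\sqrt{D(x)}$; the case $a_4=[1\!:\!0]$ is handled identically from the second row of Proposition \ref{prop:uniformization}. Since $\omega_3/2$ is real in $(0,\omega_2/2)$, Lemma \ref{lem:half-parallelogram} tells us that $x$ is monotone along $[0,\omega_3/2]\subset\PPP$, so the change of variable $\omega\mapsto x$ and the path-of-integration convention stated before Proposition \ref{prop:uniformization} give
\begin{equation*}
\frac{\omega_3}{2}=\int_{0}^{\omega_3/2}d\omega=\int_{a_4}^{X_{\pm}(b_4)}\frac{dx}{2\sqrt{D(x)}},
\end{equation*}
with the sign of the square root fixed so that the right-hand side is positive. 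Doubling yields the claimed identity.

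The main obstacle I expect is Step 3, not the identity itself but the bookkeeping of the path and of the sign of $\sqrt{D(x)}$: depending on whether $a_4=[1\!:\!0]$, on the sign of $\alpha_4$, and on which of $X_\pm(b_4)$ is hit by the uniformization, the real arc traced by $x(\omega)$ on $[0,\omega_3/2]$ may pass through $\pm\infty$, exactly as in the path convention stated right before Proposition \ref{prop:uniformization}. This is however harmless since the same monotonicity argument that produced $\omega_2=\int_{a_4}^{a_1}dx/\sqrt{D(x)}$ in the proof of Proposition \ref{prop:uniformization} applies verbatim with $a_1$ replaced by $X_{\pm}(b_4)$.
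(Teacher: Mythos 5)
Your proposal reaches the correct conclusion and uses the same basic ingredients as the paper (Lemma~\ref{lem:correspondence_ai_ei}, Lemma~\ref{lem:real_points}, Theorem~\ref{thm:disczeroes}, Lemma~\ref{lem:half-parallelogram}, the change of variable from Proposition~\ref{prop:uniformization}), but diverges from the paper in two places. The more interesting divergence is Step~3: the paper does not compute $\omega_3/2=\int_0^{\omega_3/2}d\omega$ directly; instead it introduces the quantity $\Omega_3:=\int_{a_4}^{X_\pm(b_4)}dx/\sqrt{D(x)}$, shows $\Omega_3\in(0,\omega_2)$ by locating $X_\pm(b_4)$ between $a_4$ and $a_1$ (using Remark~\ref{rem1} and positivity of $D$ on that arc), then argues $\wp(\omega_3/2)=\wp(\Omega_3/2)$ forces $\omega_3/2=\pm\Omega_3/2$ mod the lattice, and finally invokes the two-sided constraint $\omega_3,\Omega_3\in(0,\omega_2)$ to conclude $\omega_3=\Omega_3$. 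This uniqueness trick lets the authors sidestep the sign and branch-of-square-root bookkeeping that your direct approach must confront; conversely your approach avoids needing to separately locate $X_\pm(b_4)$ on the arc $(a_4,a_1)$ because the monotonicity of $x$ along $[0,\omega_2/2]$ already produces the correct subarc. Both work; the paper's is arguably safer, yours is shorter once the sign issue you flag is pinned down.

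The one genuine gap is in Step~1. You write that Lemma~\ref{lem:real_points} \emph{forces} $\omega_3/2\in\frac{\omega_1}{2}\Z+\omega_2\R$ modulo the lattice, but Lemma~\ref{lem:real_points} only supplies the forward implications ($\omega$ in one of two sets $\Rightarrow$ $x(\omega)$, $y(\omega)$ real or not). To conclude from the reality of $x(\omega_3/2)$ and $y(\omega_3/2)$ that $\omega_3/2$ lies in the indicated set, you need the \emph{converse}, which does not come for free. The paper supplies it: first it uses the bijection of Lemma~\ref{lem:half-parallelogram} to find $\widetilde\omega\in\PPP$ with $x(\widetilde\omega)=X_\pm(b_4)$, observes $\omega_3/2\in\pm\widetilde\omega+\Z\omega_1+\Z\omega_2$ (since $x$ is even and two-to-one), which places $\omega_3/2$ in the union $\{\omega_1\R+\frac{\omega_2}{2}\Z\}\cup\{\frac{\omega_1}{2}\Z+\omega_2\R\}$, and only then does the third bullet of Lemma~\ref{lem:real_points} (applied to the reality of $y(\omega_3/2)=b_4$) discard the first set. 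Your Step~1 compresses this to one sentence and thereby skips the half-parallelogram argument that justifies the converse. Since Step~3 relies on having $\omega_3/2\in(0,\omega_2/2)$ so that $[0,\omega_3/2]\subset\PPP$, this gap should be filled before the direct change of variable is applied.

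Step~2 (ruling out $\omega_3=0$) is a correct elaboration of the paper's one-line remark: if $\iota_1=\iota_2$ on $\Etproj$ then, because $\iota_1$ fixes $x$ and $\iota_2$ fixes $y$, every point would be fixed by both involutions, contradicting the fact that each has only the four branch points as fixed points.
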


\begin{proof}
We first show that $\omega_3\in (0,\omega_2)$. By Theorem \ref{thm:disczeroes}, $b_{4}\in \mathbb{P}^{1}(\R)$. Since the real polynomial $x\mapsto K(x,b_{4};t)$ has a double root and $b_{4}\in \mathbb{P}^{1}(\R)$, we obtain that $X_{\pm}(b_{4})\in \mathbb{P}^{1}(\R)$. With the same notation as in Lemma~\ref{lem:half-parallelogram}, there exists $\widetilde{\omega}\in \PPP$ such that $x(\widetilde{\omega})=X_{\pm}(b_{4})$. We have two possibilities: 
\begin{itemize}
     \item $(x(\widetilde{\omega}),y(\widetilde{\omega}))=(X_{\pm}(b_{4}),b_{4})$;
     \item $(x(\widetilde{\omega}),y(\widetilde{\omega}))=\iota_{1}(X_{\pm}(b_{4}),b_{4})$. 
\end{itemize}
Remind that $(x(\frac{\omega_{3}}{2}),y(\frac{\omega_{3}}{2}))=(X_{\pm}(b_{4}),b_{4})$, see Lemma \ref{lem:correspondence_ai_ei}. Since $\iup_{1}(\widetilde{\omega})=-\widetilde{\omega}$ by \eqref{eq:expression_group_universal_cover}, we obtain  $\frac{\omega_{3}}{2}\in \pm \widetilde{\omega}+\omega_{1}\Z+\omega_{2}\Z$. Further, the fact that 
${\widetilde{\omega}\in \PPP}$ yields ${\frac{\omega_{3}}{2}\in \{\omega_{1}\R+\frac{\omega_{2}}{2}\Z\}\bigcup\{\frac{\omega_{1}}{2}\Z+\omega_{2}\R\}}$. Since $y(\frac{\omega_{3}}{2})=b_4$ is also real, Lemma~\ref{lem:real_points} implies that  ${\frac{\omega_{3}}{2}\in\{\frac{\omega_{1}}{2}\Z+\omega_{2}\R\}}$. So $\omega_{3}\in\{\omega_{1}\Z+\omega_{2}\R\}$.  Since $\omega_{3}$ belongs to the fundamental parallelogram $\omega_{1}[0,1)+\omega_{2}[0,1)$, we find that $0\leq \omega_{3}< \omega_{2}$. Note that $\omega_{3}\neq  0$, since otherwise $\iup_{1}= \iup_{2}$, which is not possible by definition of $\iota_{1}$ and $\iota_{2}$. So $\omega_{3}\in (0,\omega_{2})$. With $\frac{\omega_{3}}{2}\in \pm \widetilde{\omega}+\omega_{1}\Z+\omega_{2}\Z$ and $\widetilde{\omega}\in \PPP$, we also deduce that $0\leq \widetilde{\omega}< \omega_{2}/2$.
 
Introduce
\begin{equation}
\label{eq:Omega_3}
     \Omega_{3}=2\int_{\wp (\omega_{3}/2)}^{+\infty}\frac{d\mathbf{u}}{\sqrt{4\mathbf{u}^{3}-g_{2}\mathbf{u}-g_{3}}} = \int_{a_{4}}^{X_{\pm}(b_{4})} \frac{dx}{\sqrt{D(x)}},
\end{equation}
where the second equality follows from a similar reasoning as in the proof of Proposition \ref{prop:uniformization}. Our aim is to prove that $\omega_3=\Omega_3$, see \eqref{eq:expression_omega_3}, and we first prove that $\Omega_3\in (0,\omega_2)$. 
Using together the facts that $0\leq \widetilde{\omega}< \omega_{2}/2$, $(x(\widetilde{\omega}),y(\widetilde{\omega}))=(X_{\pm}(b_{4}),b_{4})$ and Remark \ref{rem1}, we find $X_{\pm}(b_{4})\in (a_4,a_1)$. By Theorem \ref{thm:disczeroes} and its proof, the discriminant is positive on this interval. With $(a_4, X_{\pm}(b_{4}))\subset  (a_4,a_1)$, this implies that $\Omega_3=\int_{a_{4}}^{X_{\pm}(b_{4})} \frac{dx}{\sqrt{D(x)}}\in (0,\int_{a_{4}}^{a_1} \frac{dx}{\sqrt{D(x)}})=(0,\omega_2)$.

Using an inverse of the Weierstrass function, see \cite[$\S$ 20.221]{WW}, we get that ${\wp(\frac{\omega_3}{2})=\wp(\frac{\Omega_3}{2})}$. This finally entails 
\begin{equation*}
     \frac{\omega_3}{2}=\frac{\Omega_3}{2}\mod \omega_{1}\Z+\omega_{2}\Z, \quad \text{or} \quad \frac{\omega_3}{2}=-\frac{\Omega_3}{2} \mod \omega_{1}\Z+\omega_{2}\Z.
\end{equation*}
The only solution satisfying to the constraints $\Omega_{3},\omega_3\in(0,\omega_2)$ is $\Omega_{3}=\omega_3$, which completes the proof.\end{proof}

\subsection{Analytic continuation}
\label{sec:analcont}

The goal of this subsection is to prove that the functions $F^{1}(x;t)$ and $F^{2}(y;t)$ introduced in \eqref{eq:def_F1_F2} admit multivalued meromorphic continuations on $\Etproj$, that we will call $\rx$ and $\ry$. 
 
Define the domains 
\begin{equation}
\label{eq:three_domains}
     \mathcal{D}_{x} := \Etproj\bigcap \{\vert x\vert < 1 \},\quad \mathcal{D}_{y} := \Etproj\bigcap \{\vert y\vert < 1 \}\quad \text{and}\quad \mathcal{D}_{x,y}:=\mathcal{D}_{x}\bigcap \mathcal{D}_{y}. 
\end{equation}     
Remind that for $t$ fixed in $(0,1)$, $Q(x,y;t)$ converges for $\vert x\vert\leq1$ and $\vert y\vert\leq1$, being a generating function of probabilities. The same holds for $F^{1}(x;t)$ and $F^{2}(y;t)$. Remind also that we have defined in \eqref{def:Gamma-x_Gamma-y}
\begin{equation*}
     \Gamma_{x}=\Etproj\bigcap \{\vert x\vert = 1 \}\quad \text{and}\quad
     \Gamma_{y}=\Etproj\bigcap \{\vert y\vert = 1 \}.
\end{equation*}
$\Gamma_{x}$ is the union of two disjoint connected paths $\Gamma_{x}^{\pm}$ such that ${(x,y)\in \Gamma_{x}^{+}\Rightarrow |y|>1}$ and $(x,y)\in \Gamma_{x}^{-}\Rightarrow |y|<1$, see Lemma \ref{lem:curve_x1}. A similar statement holds for $\Gamma_{y}$. 

 We first prove a few topological properties of the domains  \eqref{eq:three_domains}.

\begin{lemma}
\label{lem:Dxy_nonempty}
One has $\mathcal{D}_{x,y}\neq \varnothing$.
\end{lemma}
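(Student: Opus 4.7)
The plan is to exploit the symmetric part of Lemma \ref{lem:curve_x1}: it supplies a nonempty connected path $\Gamma_y^-\subset\overline{E}_t$ on which $|y|=1$ while $|x|<1$. From such a path one expects to be able to perturb a point of $\Gamma_y^-$ slightly inward (in the $|y|$-direction) to reach a point where both moduli are strictly less than $1$.

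Concretely, first I would pick any $p\in\Gamma_y^-$, so that $|y(p)|=1$ and $|x(p)|<1$ (strictly). Using continuity of the second projection $x:\overline{E}_t\to\mathbb{P}^1(\mathbb{C})$ at $p$, together with the strict inequality $|x(p)|<1$, I would produce an open neighborhood $U$ of $p$ in $\overline{E}_t$ on which $|x|<1$ holds throughout. Thus $U\subset\mathcal{D}_x$, and every point of $\Gamma_y^-\cap U$ still satisfies $|y|=1$; what is left is to deform off $\Gamma_y^-$ while staying inside $U$, in the direction where $|y|$ decreases.

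For this, I would appeal to the open mapping theorem for the holomorphic projection $y:\overline{E}_t\to\mathbb{P}^1(\mathbb{C})$. Under Assumption \ref{assumption}, $\overline{E}_t$ is a connected Riemann surface (an elliptic curve) and $y$ is a non-constant holomorphic map of degree two, hence an open map. Consequently $y(U)$ is an open neighborhood of $y(p)$ in $\mathbb{P}^1(\mathbb{C})$; as $|y(p)|=1$, this neighborhood necessarily contains complex numbers of modulus strictly less than $1$. Pulling such a value back into $U$ yields a point $q\in U$ with $|x(q)|<1$ and $|y(q)|<1$, i.e., $q\in\mathcal{D}_{x,y}$.

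The only point requiring a word of care is the applicability of the open mapping theorem, which relies on $y$ being non-constant on every open subset of $\overline{E}_t$; this is immediate from the fact that $\overline{E}_t$ is an irreducible smooth projective curve and $y$ is surjective of degree two. Everything else is a routine application of continuity and of the structure theorem provided by Lemma \ref{lem:curve_x1}.
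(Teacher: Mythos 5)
Your proof is correct, and it actually supplies a step that the paper glosses over. The paper's own justification for Lemma \ref{lem:Dxy_nonempty} is the one-line assertion that ``$\mathcal{D}_{x,y}$ contains $\Gamma_{x}^{-}$ and $\Gamma_{y}^{-}$,'' which, read literally, is false: a point of $\Gamma_{y}^{-}$ has $|x|<1$ but $|y|=1$ (and symmetrically for $\Gamma_{x}^{-}$), so $\Gamma_{y}^{-}\subset\mathcal{D}_{x}$ but $\Gamma_{y}^{-}\cap\mathcal{D}_{x,y}=\varnothing$. What is true, and what you prove, is that from such a boundary point $p\in\Gamma_y^-$ one can perturb into $\mathcal{D}_{x,y}$: the strict inequality $|x(p)|<1$ survives on a neighborhood $U$, and the openness of the holomorphic projection $y$ on the smooth curve $\Etproj$ (Assumption \ref{assumption}) yields a point of $U$ with $|y|<1$. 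Your use of the open mapping theorem is exactly the missing ingredient. For what it is worth, the same perturbation can be packaged more concretely without appealing to Riemann-surface theory: by Theorem \ref{thm:disczeroes} the branch points $a_3,a_4$ have modulus $>1$ and $a_1,a_2$ have modulus $<1$, so $Y_-$ extends to a single-valued analytic function on an annular neighborhood of $\{|x|=1\}$, and since $|Y_-|<1$ on $\{|x|=1\}$ by Lemma \ref{lem:curve_x1}, continuity gives $|Y_-(x)|<1$ for $x$ just inside the unit circle, producing a point $(x,Y_-(x))\in\mathcal{D}_{x,y}$. Either route is fine; yours is structurally cleaner and requires no case analysis on branch points.
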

\begin{lemma}
\label{lem:Dx_Dy_connected}
The sets $\mathcal{D}_{x}$ and $\mathcal{D}_{y}$ are connected. 
\end{lemma}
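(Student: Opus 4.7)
The plan is to realize $\mathcal{D}_x$ as a branched double cover of the open unit disk and prove connectedness by a monodromy argument, avoiding any use of the explicit uniformization. I would consider the projection $\pi_x : \Etproj \to \mathbb{P}^1(\mathbb{C})$, $(x,y) \mapsto x$, which is an algebraic map of degree $2$ (the two preimages of a generic $x$ being the pair $Y_\pm(x)$). Its ramification locus is exactly the set of $x$ for which the two preimages coalesce, that is, the zeros $a_1, a_2, a_3, a_4$ of $\Delta^x_{[x:1]}$. Under Assumption \ref{assumption}, Proposition \ref{prop:genuscurvewalk} guarantees that these four zeros are simple, and Theorem \ref{thm:disczeroes} tells us that $a_1, a_2 \in (-1,1)$ while $a_3, a_4$ lie strictly outside the closed unit disk.

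Setting $\mathbb{D} = \{x \in \mathbb{P}^1(\mathbb{C}) : |x| < 1\}$, I would observe that $\mathcal{D}_x = \pi_x^{-1}(\mathbb{D})$, and that $\pi_x : \mathcal{D}_x \to \mathbb{D}$ is thus a degree-$2$ cover branched over exactly two interior points $a_1, a_2$. To prove connectedness, I would temporarily remove the two ramification points from $\mathcal{D}_x$ and the two branch values from $\mathbb{D}$; the resulting unramified double cover is classified by its monodromy representation
\[
\rho : \pi_1(\mathbb{D} \setminus \{a_1,a_2\}) \to \mathfrak{S}_2.
\]
Because $a_1, a_2$ are simple branch points (i.e., locally of the form $w \mapsto w^2$), the monodromy $\rho$ around each of them is the nontrivial transposition, so $\rho$ is surjective. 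This forces the unramified cover to be connected, and adjoining back the two ramification points (which are limit points of the already connected set) preserves connectedness. The same argument, applied to $(x,y) \mapsto y$ and using the statement of Theorem \ref{thm:disczeroes} for the roots $b_i$ of $\Delta^y$, yields the connectedness of $\mathcal{D}_y$.

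I do not expect a serious obstacle here: the only nontrivial input is Theorem \ref{thm:disczeroes}, which places exactly two of the four branch points strictly inside the unit disk and the other two strictly outside. One could alternatively describe $\mathcal{D}_x$ on the torus $\mathbb{C}/(\mathbb{Z}\omega_1 + \mathbb{Z}\omega_2)$ via the uniformization of Proposition \ref{prop:uniformization}, using the behavior of $|x(\omega)|$ on the half-parallelogram $\mathcal{P}$ given by Remark \ref{rem1} together with the symmetry $x(-\omega) = x(\omega)$, but the monodromy argument above seems cleaner and makes no reference to the elliptic parametrization.
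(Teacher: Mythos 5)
Your proof is correct, and it leans on exactly the same input as the paper's, namely Theorem~\ref{thm:disczeroes} placing precisely two simple branch points $a_1,a_2$ strictly inside the unit disk. Where you differ is in execution. The paper writes $\mathcal{D}_{x}=\{(x,Y_{-}(x)): \vert x\vert < 1\} \cup \{(x,Y_{+}(x)): \vert x\vert <1\}$, declares each of the two sheets ``obviously connected,'' and then exhibits the point $(a_1,Y_\pm(a_1))$ lying on both sheets, so their union is connected. You instead view $\pi_x:\mathcal{D}_x\to\mathbb{D}$ as a degree-$2$ branched cover and argue via the monodromy representation $\rho:\pi_1(\mathbb{D}\setminus\{a_1,a_2\})\to\mathfrak{S}_2$: the simple branching forces $\rho$ to be surjective, the punctured cover is therefore connected, and refilling the ramification points (which lie in the closure of an already connected set) preserves connectedness. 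Your route is a little more heavyweight for a degree-$2$ situation, but it buys you something: it makes fully explicit the one spot where the paper's proof is slightly informal, namely the assertion that the two $Y_\pm$-sheets are ``obviously connected'' despite the branch cut through the disk. One small quibble: you present your argument as an alternative to ``the explicit uniformization,'' but the paper's proof of this lemma does not use the uniformization either; both proofs run entirely at the level of the algebraic double cover and Theorem~\ref{thm:disczeroes}.
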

\begin{proof}[Proof of Lemmas \ref{lem:Dxy_nonempty} and \ref{lem:Dx_Dy_connected}]
Lemma \ref{lem:Dxy_nonempty} is obvious from Lemma \ref{lem:curve_x1}, as $\mathcal{D}_{x,y}$ contains $\Gamma_{x}^{-}$ and $\Gamma_{y}^{-}$. Let us do the proof of Lemma \ref{lem:Dx_Dy_connected} for $\mathcal{D}_{x}$, the other case being similar. By definition, 
\begin{equation}\label{eq1}
\mathcal{D}_{x}=\{(x,Y_{-}(x)): \vert x\vert < 1\} \bigcup \{(x,Y_{+}(x)): \vert x\vert <1\}.
\end{equation}
Since both sets $\{(x,Y_{-}(x)): \vert x\vert < 1\}$  and  $\{(x,Y_{+}(x)): \vert x\vert < 1\}$ are obviously connected, it suffices to prove that they have a nonempty intersection. 
 
As we can see in Theorem \ref{thm:disczeroes}, there exists $\vert a_{1}\vert <1$ such that $y\mapsto \overline{K}(a_{1},y;t)$ has a double root. This means that  $Y_{-}(a_{1})=Y_{+}(a_{1})$, proving that 
\begin{equation*}
     (a_{1},Y_{\pm}(a_{1}))\in \{(x,Y_{-}(x)): \vert x\vert < 1\}\bigcap \{(x,Y_{+}(x)): \vert x\vert < 1\}.\qedhere
\end{equation*}
\end{proof}

Let us examine the consequences of Lemmas \ref{lem:Dxy_nonempty} and \ref{lem:Dx_Dy_connected}. We may define the three generating functions $Q(x,y;t)$, $F^{1}(x;t)$ and $F^{2}(y;t)$ on $\mathcal{D}_{x,y}$. Restricting the main functional equation \eqref{eq:funcequ} on $\mathcal{D}_{x,y}$, we obtain  
\begin{equation}
\label{eq:funcequaonthecurve2}
  0  = F^{1}(x;t) +F^{2}(y;t)-K(0,0;t) Q(0,0;t)+xy.
\end{equation}
Since $F^{1}(x;t)$ is analytic for $\vert x\vert<1$, it is analytic in $\mathcal{D}_{x}$. We may define $F^{2}(y;t)$ on $\mathcal{D}_{x}\setminus \mathcal{D}_{x,y}$ in the following way: 
\begin{equation*}
     F^{2}(y;t)=-F^{1}(x;t)+K(0,0;t) Q(0,0;t)-xy.
\end{equation*}     
Using \eqref{eq:funcequaonthecurve2}, we obtain an analytic continuation of $F^{2}(y;t)$ on  $\mathcal{D}_{x}$.
 Similarly $F^{2}(y;t)$ is analytic in $\mathcal{D}_{y}$, and we may continue $F^{1}(x;t)$ on $\mathcal{D}_{y}$.

Since the union of two connected sets ($\mathcal{D}_{x}$ and $\mathcal{D}_{y}$) with nonempty intersection (viz, $\mathcal{D}_{x,y}$) is connected, we have proved that we may use \eqref{eq:funcequaonthecurve2} to lift $F^{1}(x;t)$ and $F^{2}(y;t)$ as meromorphic functions on the open connected domain 
\begin{equation*}
     {\mathcal{D}:=\mathcal{D}_{x}\bigcup\mathcal{D}_{y}}.
\end{equation*}
The domain $\mathcal{D}\neq \Etproj$ admits the paths $\Gamma_{x}^{+},\Gamma_{y}^{+}$ as boundary, and obviously contains $\Gamma_{x}^{-},\Gamma_{y}^{-}$.

\medskip
 
The next step is to apply $\Lambda$ in \eqref{eq:Lambda} so as to lift $F^{1}(x;t)$ and $F^{2}(y;t)$ on the universal covering $\mathbb C$ of $\Etproj$.
The map $\Lambda$ being $(\omega_{1},\omega_{2})$-periodic, it induces a map $\widetilde{\Lambda}$ from $\C/( \omega_{1}\Z+\omega_{2}\Z)$ to $\overline{E}_t$, which is an homeomorphism. Then 
\begin{equation*}
     \widetilde{\Lambda}^{-1}(\mathcal{D}),\quad \widetilde{\Lambda}^{-1}(\mathcal{D}_{x})\quad \text{and}\quad \widetilde{\Lambda}^{-1}(\mathcal{D}_{y})
\end{equation*}
are connected domains.     
Using the homeomorphism property of $\widetilde{\Lambda}$, we obtain that the boundary of the open set $\widetilde{\Lambda}^{-1}(\mathcal{D}_{x})$ is the image under $\widetilde{\Lambda}^{-1}$ of the boundary of $\mathcal{D}_{x}$. By Lemma~\ref{lem:curve_x1}, we deduce that the boundary of $\widetilde{\Lambda}^{-1}(\mathcal{D}_{x})$ is composed by the two paths $\widetilde{\Lambda}^{-1}(\Gamma_{x}^{\pm})$. A similar statement holds for $\widetilde{\Lambda}^{-1}(\mathcal{D}_{y})$. Finally, the boundary of the open connected set $\widetilde{\Lambda}^{-1}(\mathcal{D})$ is composed of the paths $\widetilde{\Lambda}^{-1}(\Gamma_{x}^{+})$ and $\widetilde{\Lambda}^{-1}(\Gamma_{y}^{+})$, and by construction $\widetilde{\Lambda}^{-1}(\mathcal{D})$ contains $\widetilde{\Lambda}^{-1}(\Gamma_{x}^{-})$ and $\widetilde{\Lambda}^{-1}(\Gamma_{y}^{-})$. See Figure \ref{The_fundamental_parallelogram} for an illustration.

Consider now $\Lambda^{-1}(\mathcal{D})$, the preimage of $\mathcal{D}$ via $\Lambda$. It is $(\omega_{1},\omega_{2})$-periodic but not necessarily connected anymore. Let us fix $\widetilde{\mathcal{D}}\subset \C$, a connected component of $\Lambda^{-1}(\mathcal{D})$ in the $\omega$-plane that intersects the fundamental parallelogram $\omega_1[0,1)+\omega_2[0,1)$. In particular we have $\Lambda (\widetilde{\mathcal{D}})=\mathcal{D}$. Similarly, let us define $\widetilde{\mathcal{D}}_{x}\subset \C$ and $\widetilde{\mathcal{D}}_{y}\subset \C$ such that 
\begin{equation*}
     \Lambda (\widetilde{\mathcal{D}}_{x})=\mathcal{D}_{x},\quad \Lambda (\widetilde{\mathcal{D}}_{y})=\mathcal{D}_{y}\quad \text{and}\quad
     \widetilde{\mathcal{D}}=\widetilde{\mathcal{D}}_{x}\bigcup \widetilde{\mathcal{D}}_{y}.
\end{equation*}
Note that by definition, any pair of (distinct) paths among the four paths $\Gamma_{x}^{\pm},\Gamma_{y}^{\pm}$ has an empty intersection.
 From what precedes, there exist connected paths $\widetilde{\Gamma}_{x}^{\pm}$ and $\widetilde{\Gamma}_{y}^{\pm}$ such that
\begin{itemize}
     \item $\Lambda (\widetilde{\Gamma}_{x}^{\pm})=\Gamma_{x}^{\pm}$ and $\Lambda (\widetilde{\Gamma}_{y}^{\pm})=\Gamma_{y}^{\pm}$;
    \item  $\widetilde{\mathcal{D}}_{x}$ is delimited by $\widetilde{\Gamma}_{x}^{+}$ and $\widetilde{\Gamma}_{x}^{-}$;
     \item $\widetilde{\mathcal{D}}_{y}$ is delimited by $\widetilde{\Gamma}_{y}^{+}$ and $\widetilde{\Gamma}_{y}^{-}$;
     \item $\widetilde{\mathcal{D}}$ is delimited by $\widetilde{\Gamma}_{x}^{+}$ and $\widetilde{\Gamma}_{y}^{+}$, and contains $\widetilde{\Gamma}_{x}^{-}$ and $\widetilde{\Gamma}_{y}^{-}$.
\end{itemize}
In the lemma hereafter, we derive some properties of the paths $\widetilde{\Gamma}_{x}^{\pm}$ and $\widetilde{\Gamma}_{y}^{\pm}$. See Figure \ref{The_fundamental_parallelogram} for a typical example.

\begin{lemma}\label{lem1}
The following holds:
\begin{enumerate}
     \item\label{it:1}The paths $\widetilde{\Gamma}_{x}^{\pm}$ are $\omega_{1}$-periodic, and do not cross the vertical straight lines going through $\ell\omega_{2}/2$, for any $\ell\in \Z$;
     \item\label{it:2}The paths $\widetilde{\Gamma}_{y}^{\pm}$ are $\omega_{1}$-periodic, and do not cross the vertical straight lines going through $\omega_3/2+\ell\omega_{2}/2$, for any $\ell\in \Z$;
     \item\label{it:3}The domain $\widetilde{\mathcal{D}}$ is $\omega_{1}$-periodic;          
     \item\label{it:4}The domain $\widetilde{\mathcal{D}}$ is delimited by a left boundary, namely $\widetilde{\Gamma}_{x}^{+}$, and a right boundary, $\widetilde{\Gamma}_{y}^{+}$.
\end{enumerate}
\end{lemma}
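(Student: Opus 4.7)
The strategy hinges on three ingredients: the explicit uniformization $(x(\omega),y(\omega))$ of Proposition \ref{prop:uniformization} together with the symmetric formula \eqref{eq6} for $y$; the reality results of Lemma \ref{lem:real_points}; and the location of $|x|$ and $|y|$ on segments of the half-parallelogram boundary $\PPP$ recorded in Remark \ref{rem1}.

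For (\ref{it:1}), since $x(\omega)$ is a rational function of $\wp(\omega)$, it is $(\omega_1,\omega_2)$-periodic, so $\Lambda^{-1}(\Gamma_x)=\{\omega:|x(\omega)|=1\}$ is $(\omega_1,\omega_2)$-invariant. To show that $\widetilde{\Gamma}_x^{\pm}$ avoids each vertical line $\ell\omega_2/2+\omega_1\R$, I note that by Lemma \ref{lem:real_points} the function $x$ is real on these lines; hence it suffices to verify $|x|\neq 1$ there. Using $\omega_2$-periodicity of $x$ I reduce to $\ell\in\{0,1\}$, and using the evenness $\wp(-\omega)=\wp(\omega)$ combined with $\omega_1$-periodicity I further restrict $\omega_1 t$ to $t\in[0,1/2]$. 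On $[0,\omega_1/2]$ one has $|x|>1$ and on $[\omega_2/2,(\omega_1+\omega_2)/2]$ one has $|x|<1$, both strictly, by Remark \ref{rem1}. Non-crossing follows, and then $\widetilde{\Gamma}_x^{\pm}$, being a connected component of $\Lambda^{-1}(\Gamma_x^{\pm})$ confined to a vertical strip of width $\omega_2/2$, must coincide with its $\omega_1$-translate.

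For (\ref{it:2}) the proof is identical after invoking the symmetric rewriting \eqref{eq6}, which presents $y(\omega)$ as the same shape of rational function of $\wp(\omega-\omega_3/2)$; the exceptional vertical lines become $\omega_3/2+\ell\omega_2/2+\omega_1\R$, and the strict inequalities for $|y|$ on the segments $[\omega_3/2,(\omega_1+\omega_3)/2]$ and $[(\omega_2+\omega_3)/2,(\omega_1+\omega_2+\omega_3)/2]$ are already recorded in the last paragraph of Remark \ref{rem1}.

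Part (\ref{it:3}) is then immediate: $\Lambda^{-1}(\mathcal{D})$ is $\omega_1$-invariant as a set, and its boundary is contained in $\widetilde{\Gamma}_x^+\cup\widetilde{\Gamma}_y^+$, which is $\omega_1$-periodic by (\ref{it:1}) and (\ref{it:2}); so each connected component is preserved by $\omega_1$-translation, and $\widetilde{\mathcal{D}}$ in particular. Finally, (\ref{it:4}) is a bookkeeping step: using $\omega_3\in(0,\omega_2)$ from Lemma \ref{lem:formula_omega_3}, the four ``special'' vertical lines order along a period as $0<\omega_3/2<\omega_2/2<(\omega_2+\omega_3)/2<\omega_2$; the middle two lie inside $\widetilde{\mathcal{D}}_x$ respectively $\widetilde{\mathcal{D}}_y$, while the outer two lines (where $|x|>1$, respectively $|y|>1$) are avoided, forcing $\widetilde{\Gamma}_x^+$ and $\widetilde{\Gamma}_y^+$ to sit in the outer strips and the interior curves $\widetilde{\Gamma}_x^-,\widetilde{\Gamma}_y^-$ to sit between them inside $\widetilde{\mathcal{D}}$. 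The \emph{main obstacle} I anticipate is precisely this last step: correctly identifying which of $\widetilde{\Gamma}_x^\pm$ (respectively $\widetilde{\Gamma}_y^\pm$) sits on the left versus the right, by tracking the continuous evolution of $(x,y)$ along a real horizontal segment joining the points of Lemma \ref{lem:correspondence_ai_ei}, and confirming that $\widetilde{\Gamma}_x^-\subset\widetilde{\mathcal{D}}_y$ and $\widetilde{\Gamma}_y^-\subset\widetilde{\mathcal{D}}_x$ (consistent with $|y|<1$ on $\Gamma_x^-$ and $|x|<1$ on $\Gamma_y^-$).
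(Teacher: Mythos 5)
Your reduction to the two segments $[0,\omega_1/2]$ and $[\omega_2/2,(\omega_1+\omega_2)/2]$ via the evenness and $(\omega_1,\omega_2)$-periodicity of $\wp$ is a clean shortcut: you obtain non-crossing directly on the whole vertical lines, whereas the paper first establishes it only on the vertical \emph{segments} $[\ell\omega_2/2,\omega_1/2+\ell\omega_2/2]$ and then has to extend. However, the one-line justification you give for the $\omega_1$-periodicity of $\widetilde{\Gamma}_x^\pm$ is a genuine gap. Being a connected component of $\Lambda^{-1}(\Gamma_x^\pm)$ confined to a vertical strip of width $<\omega_2$ does force the period to lie in $\omega_1\Z$, but it does not exclude the two remaining possibilities: $\widetilde{\Gamma}_x^\pm$ could be a \emph{bounded} closed loop (if $\Gamma_x^\pm$ were null-homotopic in $\Etproj$), or it could be $m\omega_1$-periodic with $|m|>1$, in which case $\widetilde{\Gamma}_x^\pm$ and $\widetilde{\Gamma}_x^\pm+\omega_1$ would be \emph{distinct} components of $\Lambda^{-1}(\Gamma_x^\pm)$ living in the same strip. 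The paper closes this by exploiting the real structure of the uniformization: since $x(\overline{\omega})=\overline{x(\omega)}$ and $\widetilde{\Gamma}_{x}^{+}$ meets the real axis $\R\omega_2$, it is set-wise fixed by complex conjugation; combined with the crossing at height $\omega_1/2$, this produces two points $\omega'$ and $\omega'+\omega_1$ both lying on $\widetilde{\Gamma}_{x}^{+}$, whence $\widetilde{\Gamma}_{x}^{+}+\omega_1=\widetilde{\Gamma}_{x}^{+}$. You need an argument of this kind (or an explicit computation of the homology class of $\Gamma_x^\pm$) to finish items~(\ref{it:1}) and~(\ref{it:2}).

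On item~(\ref{it:4}), the bookkeeping you sketch is also slightly off: on the vertical line through $\omega_3/2$ one has $|y(\omega)|>1$ by the last sentence of Remark~\ref{rem1}, so that line lies \emph{outside} $\widetilde{\mathcal D}_y$, contrary to your assertion that ``the middle two lie inside $\widetilde{\mathcal D}_x$ respectively $\widetilde{\mathcal D}_y$.'' The paper resolves the left/right identification differently, again via the involutions rather than by pure bookkeeping on the lines: from $\iup_1(\omega)=-\omega$ and $\iota_1(\Gamma_x^-)=\Gamma_x^+$ one gets that $\widetilde{\Gamma}_x^\pm$ are mirror-symmetric across a vertical line $\ell_x\omega_2/2$, and the sign of $|x|-1$ at $\ell\omega_2/2$ (using Remark~\ref{rem1}) forces $\ell_x$ odd; the same produces an odd $\ell_y$ for $\widetilde{\Gamma}_y^\pm$ around $\ell_y\omega_2/2+\omega_3/2$; then $\omega_3\in(0,\omega_2)$ together with $\widetilde{\mathcal D}_x\cap\widetilde{\mathcal D}_y\neq\varnothing$ pins down $\ell_y=\ell_x$ and hence the ordering. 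You correctly anticipate that this last step is the delicate one, but a complete argument has to invoke the conjugation/involution symmetries explicitly rather than expect the conclusion to follow from confinement to strips.
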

 
Our proof of Lemma \ref{lem1} will use various properties of the uniformization. In comparison, let us note that the proof of \cite{FIM} (valid for $t=1$) is based on more topological arguments, as the homology classes of $\widetilde{\Gamma}_{x}^{\pm}$, see \cite[Chapter~3]{FIM}.

\begin{proof}
Let us begin with the paths $\widetilde{\Gamma}_{x}^{\pm}$.  
By Remark \ref{rem1}, for all $\omega$ belonging to the vertical segment $[\omega_{2}/2,(\omega_{1}+\omega_{2})/2]$ one has $\vert x(\omega)\vert<1$, and similarly on $[0,\omega_{1}/2]$ one has $\vert x(\omega)\vert>1$.
 This implies that for all $\ell\in \Z$, $\widetilde{\Gamma}_{x}^{\pm}$ do not cross the vertical segments ${[\omega_{2}/2+\ell\omega_{2},(\omega_{1}+\omega_{2})/2)+\ell\omega_{2}]}$ and $[\ell\omega_{2},\omega_{1}/2+\ell\omega_{2}]$. Then, for all $\ell\in \Z$, they do not cross the vertical segments ${[\ell\omega_{2}/2,\omega_{1}/2+\ell\omega_{2}/2]}$.
It follows from the same Remark~\ref{rem1} that the preimage of  $-1$ by $x(\omega)$ belongs to the horizontal lines going through $\ell \omega_{1}$, $\ell\in \Z$, see the right display on Figure \ref{fig:half_parallelogram}. Similarly, the preimage of $1$ belongs to the horizontal lines going through $(2\ell+1) \omega_{1}/2$, $\ell\in \Z$.
Since for all $\ell\in \Z$, the connected paths $\widetilde{\Gamma}_{x}^{\pm}$ do not cross the vertical segments ${[\ell\omega_{2}/2,\omega_{1}/2+\ell\omega_{2}/2]}$, we deduce the existence of $\ell'\in \Z$, such that one of the paths $\widetilde{\Gamma}_{x}^{\pm}$ crosses the horizontal segments $[\ell'\omega_{2},\omega_{2}/2+\ell'\omega_{2}]$ and $[\omega_{1}/2+\ell'\omega_{2},(\omega_{1}+\omega_{2})/2+\ell'\omega_{2}]$. Let us first assume that $\widetilde{\Gamma}_{x}^{+}$ is the crossing path.

Using Proposition~\ref{prop:uniformization} together with the fact that $\wp(\overline{\omega})=\overline{\wp(\omega)}$, see \eqref{eq:conjug}, we deduce that $x(\overline{\omega})=\overline{x(\omega)}$, and then $\widetilde{\Gamma}_{x}^{+}$ is sent by complex conjugation to a translation of $\widetilde{\Gamma}_{x}^{\pm }$  by the periods $\omega_{1},\omega_{2}$. Since $\Gamma_{x}^{+}\bigcap \Gamma_{x}^{-}=\varnothing$ and  the crossing point of $\widetilde{\Gamma}_{x}^{+}$  with the horizontal line $\R \omega_{2}$ is fixed by complex conjugation, it follows that  $\widetilde{\Gamma}_{x}^{+}$ is setwise fixed by complex conjugation. We deduce that
 $\widetilde{\Gamma}_{x}^{+}$ is symmetric w.r.t.\ the horizontal line going through $0$. It follows that the restriction of $\widetilde{\Gamma}_{x}^{+}$ to 
the horizontal strip $\R+(-1/2,1/2)\omega_{1}$  does not cross the vertical segments going through $\ell\omega_{2}/2$, $\ell\in \Z$. Furthermore, there exists $\omega'$ belonging to the horizontal line going through
$-\frac{\omega_{1}}{2}$, such that $\omega '$ and $\omega '+\omega_{1}$ belong to $\widetilde{\Gamma}_{x}^{+}$. Since $\Lambda (\omega)=\Lambda (\omega+\omega_{1})$ we deduce that $\widetilde{\Gamma}_{x}^{+}$ is $\omega_{1}$-periodic, and does not cross the vertical lines going through $\ell\omega_{2}/2$, $\ell\in \Z$. 

Since $\iota_{1}(\Gamma_{x}^{-})=\Gamma_{x}^{+}$ and since $\widetilde{\Gamma}_{x}^{\pm}$ are connected, there exist $\ell_{1},\ell_{2}\in \Z$ such that  $\iup_{1}(\widetilde{\Gamma}_{x}^{+})=\widetilde{\Gamma}_{x}^{-}+\ell_{1}\omega_{1}+\ell_{2}\omega_{2}$. As $\widetilde{\Gamma}_{x}^{+}$ is $\omega_{1}$-periodic, we may take $\ell_{1}=0$. Using $\iup_{1}(\omega)=-\omega$, we deduce from what we proved for $\widetilde{\Gamma}_{x}^{+}$ that similar results hold for $\widetilde{\Gamma}_{x}^{-}$: it is an $\omega_{1}$-periodic path of $\C$ that does not cross the vertical lines going through $\ell\omega_{2}/2$, $\ell\in \Z$. The proof in the case where $\widetilde{\Gamma}_{x}^{-}$ is the crossing path is totally similar, and the proof of (\ref{it:1}) is complete.

\medskip

Let us now show (\ref{it:2}). With \eqref{eq6}, a similar statement is valid for $\widetilde{\Gamma}_{y}^{\pm}$: they are $\omega_{1}$-periodic paths of $\C$ that do not cross the vertical lines going through $\omega_{3}/2+\ell\omega_{2}/2$, $\ell\in \Z$.

\medskip
 
We prove (\ref{it:3}). We deduce the $\omega_{1}$-periodicity of $\widetilde{\mathcal{D}}$ from the fact (proved above the statement of Lemma \ref{lem1}) that $\widetilde{\mathcal{D}}$ is delimited by $\widetilde{\Gamma}_{x}^{+}$ and $\widetilde{\Gamma}_{y}^{+}$, which are $\omega_{1}$-periodic. 

\medskip

Let us conclude with the proof of (\ref{it:4}). By the first two points, the curves $\widetilde{\Gamma}_{x}^{+}$ and $\widetilde{\Gamma}_{y}^{+}$ do not cross convenient vertical lines. Furthermore, by construction $\Gamma_{x}^{+}\bigcap \Gamma_{y}^{+}=\varnothing$, proving that the two boundaries do not cross. Then, one path is the left boundary of $\widetilde{\mathcal{D}}$ while the other one is the right boundary. It remains to determine which one is the left one. 

 We use $\iup_{1}(\widetilde{\Gamma}_{x}^{+})=\widetilde{\Gamma}_{x}^{-}+\ell_{2}\omega_{2}$,  for some $\ell_{2}\in \Z$, and $\iup_{1}(\omega)=-\omega$, to deduce that there exists $\ell_{x}\in \Z$ such that the two paths $\widetilde{\Gamma}_{x}^{\pm}$ are symmetric w.r.t.\ the vertical line going through $\ell_{x}\omega_{2}/2$.
Furthermore, since $\mathcal{D}_{x}\neq \Etproj$ and since $\widetilde{\Gamma}_{x}^{\pm}$ are the boundaries of $\widetilde{\mathcal{D}}_{x}$, we deduce that one of $\widetilde{\Gamma}_{x}^{\pm}$ belongs to one of the vertical strips
\begin{equation*}
     \R\omega_{1}+(\ell_{x}-1,\ell_{x})\frac{\omega_{2}}{2}\quad\text{and}\quad \R\omega_{1}+(\ell_{x},\ell_{x}+1)\frac{\omega_{2}}{2},
\end{equation*}
and the remaining path of $\widetilde{\Gamma}_{x}^{\pm}$ lies in the other strip.
Remind that  $\widetilde{\Gamma}_{x}^{\pm}$ are symmetric the one of the other w.r.t.\ the vertical line going through $\ell_{x}\omega_{2}/2$. Moreover, with Remark~\ref{rem1}, one has $\vert x(\ell\omega_{2}/2)\vert<1$ (resp.\ $\vert x(\ell\omega_{2}/2)\vert>1$) for odd (resp.\ even) values of $\ell$. We deduce that $\ell_{x}$ should be odd.

Similarly, using ${\iota_{2}(\Gamma_{y}^{+})=\Gamma_{y}^{-}}$ and $\iup_{2}(\omega)=-\omega+\omega_{3}$, there exists an odd integer $\ell_{y}$ such that each of $\widetilde{\Gamma}_{y}^{\pm}$ belongs to one of the vertical strips
\begin{equation*}
     \R\omega_{1}+(\ell_{y}-1,\ell_{y})\frac{\omega_{2}}{2}+\frac{\omega_{3}}{2}\quad \text{and}\quad  \R\omega_{1}+(\ell_{y},\ell_{y}+1)\frac{\omega_{2}}{2}+\frac{\omega_{3}}{2},
\end{equation*}
and $\widetilde{\Gamma}_{y}^{\pm}$ are symmetric w.r.t.\ the vertical line going through $\ell_{y}\omega_{2}/2+\omega_{3}/2$. 

To conclude the proof of (\ref{it:4}), we first note that $\widetilde{\mathcal{D}}_{x}\bigcap \widetilde{\mathcal{D}}_{y}\neq \varnothing$ since otherwise  $\widetilde{\mathcal{D}}=\widetilde{\mathcal{D}}_{x}\bigcup \widetilde{\mathcal{D}}_{y}$ would not be connected. Remind that $\widetilde{\mathcal{D}}_{x}$ is delimited by $\widetilde{\Gamma}_{x}^{+}$ and $\widetilde{\Gamma}_{x}^{-}$, and $\widetilde{\mathcal{D}}_{y}$ is delimited by $\widetilde{\Gamma}_{y}^{+}$ and $\widetilde{\Gamma}_{y}^{-}$. This shows that the domain delimited by $\widetilde{\Gamma}_{x}^{\pm}$ has nonempty intersection with the domain bounded by $\widetilde{\Gamma}_{y}^{\pm}$.
Since $\omega_{3}\in (0,\omega_{2})$, the odd integer $\ell$ such that $\ell\omega_{2}/2+\omega_{3}/2$ is the closest to $\ell_{x}\omega_{2}/2$ is $\ell_{x}$. This shows that $\ell_{y}=\ell_{x}$. Then one of the two curves $\widetilde{\Gamma}_{x}^{\pm}$ has to be on the left to $\widetilde{\Gamma}_{y}^{\pm}$, and the latter has to be the left boundary of $\widetilde{\mathcal{D}}$. Since $\widetilde{\Gamma}_{x}^{+}$ and $\widetilde{\Gamma}_{y}^{+}$ are the boundaries of $\widetilde{\mathcal{D}}$, the result follows.
\end{proof}
 
\unitlength=0.6cm
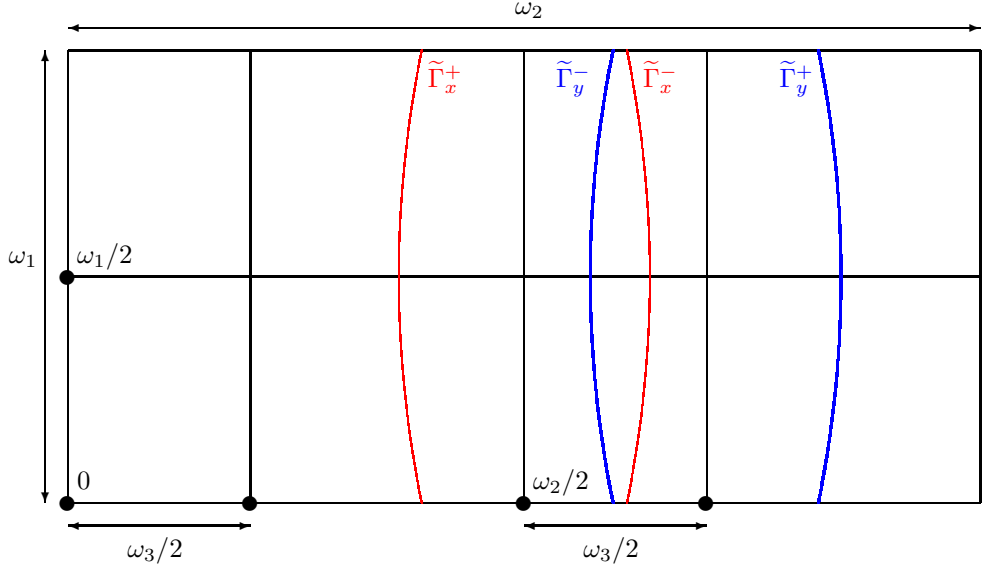
\begin{figure}[t]
    \vspace{65mm}
  \begin{center}
\begin{tabular}{cccc}

\begin{picture}(0,0)(0,0)
\put(-10,0){\line(1,0){20}}
\put(-10,10){\line(1,0){20}}
\put(-10,5){\line(1,0){20}}
\put(-10,0){\line(0,1){10}}
\put(10,0){\line(0,1){10}}
\put(0,0){\line(0,1){10}}
\put(4,0){\line(0,1){10}}
\put(-6,0){\line(0,1){10}}
\put(-10.23,-0.23){{\LARGE$\bullet$}}
\put(-6.23,-0.23){{\LARGE$\bullet$}}
\put(-0.23,-0.23){{\LARGE$\bullet$}}
\put(3.77,-0.23){{\LARGE$\bullet$}}
\put(-10.23,4.77){{\LARGE$\bullet$}}
\put(-9.8,0.3){{$0$}}
\put(0.2,0.3){{$\omega_{2}/2$}}
\put(-9.8,5.3){{$\omega_{1}/2$}}
\put(-10,-0.5){\vector(1,0){4}}
\put(-6,-0.5){\vector(-1,0){4}}
\put(-8.7,-1.2){{$\omega_{3}/2$}}
\put(0,-0.5){\vector(1,0){4}}
\put(4,-0.5){\vector(-1,0){4}}
\put(1.3,-1.2){{$\omega_{3}/2$}}
\put(-10,10.5){\vector(1,0){20}}
\put(10,10.5){\vector(-1,0){20}}
\put(-0.2,10.8){{$\omega_{2}$}}
\put(-10.5,0){\vector(0,1){10}}
\put(-10.5,10){\vector(0,-1){10}}
\put(-11.3,5.3){{$\omega_{1}$}}
\textcolor{red}{
\qbezier(-2.25,10)(-2.75,7.5)(-2.75,5)
\qbezier(2.25,10)(2.75,7.5)(2.75,5)
\qbezier(-2.25,0)(-2.75,2.5)(-2.75,5)
\qbezier(2.25,0)(2.75,2.5)(2.75,5)
\put(-2.11,9.2){{$\widetilde{\Gamma}_{x}^{+}$}}
\put(2.6,9.2){{$\widetilde{\Gamma}_{x}^{-}$}}
}
\thicklines
\textcolor{blue}{
\qbezier(1.75,10)(1.25,7.5)(1.25,5)
\qbezier(6.25,10)(6.75,7.5)(6.75,5)
\qbezier(1.75,0)(1.25,2.5)(1.25,5)
\qbezier(6.25,0)(6.75,2.5)(6.75,5)
\put(0.5,9.2){{$\widetilde{\Gamma}_{y}^{-}$}}
\put(5.4,9.2){{$\widetilde{\Gamma}_{y}^{+}$}}
}
\end{picture}
    \end{tabular}
  \end{center}
  \vspace{10mm}
\caption{The parallelogram $\omega_1[0,1)+\omega_2[0,1)$, and important points and domains on it}
\label{The_fundamental_parallelogram}
\end{figure} 

\begin{lemma}
\label{lem:strip}
There exists a nonempty open connected set $\mathcal{U}\subset \C$ such that $\widetilde{\tau}(\mathcal{U})\subset \widetilde{\mathcal{D}}\bigcap \widetilde{\tau}(\widetilde{\mathcal{D}})$. Furthermore,
we have 
\begin{equation*} 
     \bigcup_{\ell\in \Z} \widetilde{\tau}^{\ell}\bigl(\widetilde{\mathcal{D}}\bigr)=\C.
\end{equation*}
\end{lemma}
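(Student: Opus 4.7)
Since $\omega_4=0$ we have $\widetilde{\tau}(\omega)=\omega+\omega_3$, so both assertions of the lemma become statements about horizontal translates of $\widetilde{\mathcal{D}}$ in $\C$: (i) $\widetilde{\mathcal{D}}\cap(\widetilde{\mathcal{D}}+\omega_3)\neq\varnothing$, from which $\mathcal{U}$ arises by taking any open connected component and pulling it back by $\widetilde{\tau}^{-1}$; and (ii) $\bigcup_{\ell\in\Z}(\widetilde{\mathcal{D}}+\ell\omega_3)=\C$. My plan is to slice everything by horizontal lines and exploit the mirror symmetries established in the proof of Lemma~\ref{lem1}. Recall that the $\omega_1$-periodic boundary curves $\widetilde{\Gamma}_x^{\pm}$ are interchanged (up to a shift by $\ell_x\omega_2$, with $\ell_x$ odd) by the involution $\iup_1(\omega)=-\omega$, so they are mirror-symmetric about the vertical axis $\{\Re\omega=\ell_x\omega_2/2\}$; likewise $\widetilde{\Gamma}_y^{\pm}$ are mirror-symmetric about $\{\Re\omega=\ell_x\omega_2/2+\omega_3/2\}$. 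At a fixed height $y_0\in\R$, writing $\alpha=\alpha(y_0)$ for the abscissa of $\widetilde{\Gamma}_x^{+}\cap\{\Im\omega=y_0\}$ and $\beta=\beta(y_0)$ for that of $\widetilde{\Gamma}_y^{-}\cap\{\Im\omega=y_0\}$, the horizontal slices are the open intervals $\widetilde{\mathcal{D}}_x(y_0)=(\alpha,\ell_x\omega_2-\alpha)$ and $\widetilde{\mathcal{D}}_y(y_0)=(\beta,\ell_x\omega_2+\omega_3-\beta)$.

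The crucial input comes from Lemma~\ref{lem:curve_x1}: on $\Gamma_x^-$ one has $|y|<1$, hence $\widetilde{\Gamma}_x^{-}\subset\widetilde{\mathcal{D}}_y$, and symmetrically $\widetilde{\Gamma}_y^{-}\subset\widetilde{\mathcal{D}}_x$. At height $y_0$ the first inclusion says that the abscissa $\ell_x\omega_2-\alpha$ of $\widetilde{\Gamma}_x^{-}\cap\{\Im\omega=y_0\}$ lies in $\widetilde{\mathcal{D}}_y(y_0)=(\beta,\ell_x\omega_2+\omega_3-\beta)$, yielding $\alpha+\beta<\ell_x\omega_2$ and $\beta<\alpha+\omega_3$, while the second inclusion gives $\alpha<\beta$. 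It follows that $\widetilde{\mathcal{D}}_x(y_0)$ and $\widetilde{\mathcal{D}}_y(y_0)$ overlap and their union is the single open interval $\widetilde{\mathcal{D}}(y_0)=(\alpha,\ell_x\omega_2+\omega_3-\beta)$, whose length $\ell_x\omega_2+\omega_3-\alpha-\beta$ is strictly greater than $\omega_3$.

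Claim (ii) then follows at once, since the $\omega_3\Z$-translates of any open interval of length greater than $\omega_3$ cover $\R$: every horizontal line lies completely in $\bigcup_{\ell}(\widetilde{\mathcal{D}}+\ell\omega_3)$, and hence this union equals $\C$. For (i), the translated slice $\widetilde{\mathcal{D}}_x(y_0)+\omega_3$ is centered at $\ell_x\omega_2/2+\omega_3$, still at distance exactly $\omega_3/2$ from the center of $\widetilde{\mathcal{D}}_y(y_0)$, so the very same overlap calculation yields $(\widetilde{\mathcal{D}}_x+\omega_3)\cap\widetilde{\mathcal{D}}_y\neq\varnothing$, which is contained in $\widetilde{\mathcal{D}}\cap\widetilde{\tau}(\widetilde{\mathcal{D}})$; pulling back any open connected component of it via $\widetilde{\tau}$ produces the required $\mathcal{U}$. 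The main technical point to check, which I expect to be the hardest step, is that at every height each boundary curve $\widetilde{\Gamma}_x^{\pm},\widetilde{\Gamma}_y^{\pm}$ meets the horizontal line in a single point, so that the slices are genuine single intervals and the abscissae $\alpha(y_0),\beta(y_0)$ are unambiguously defined; this monotonicity in the imaginary direction should follow from the explicit uniformization of Proposition~\ref{prop:uniformization} combined with the strip-confinement statements of Lemma~\ref{lem1}.
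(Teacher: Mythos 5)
Your approach is genuinely different from the paper's. The paper never slices by horizontal lines: it first shows $\tau(\Gamma_x^+)\subset\mathcal{D}$ by a group-action argument ($\iota_1(\Gamma_x^+)=\Gamma_x^-$ has $\vert y\vert<1$, and $\iota_2$ preserves $\{\vert y\vert<1\}$), lifts this to $\widetilde{\tau}(\widetilde{\Gamma}_x^+)\subset\widetilde{\mathcal{D}}$ by pinning down the lattice shift with $\omega_3\in(0,\omega_2)$, produces $\mathcal{U}$ by openness near the left boundary, and obtains $\bigcup_\ell\widetilde{\tau}^\ell(\widetilde{\mathcal{D}})=\C$ by an induction that moves $\widetilde{\Gamma}_x^+$ and $\widetilde{\Gamma}_y^+$ off to $\mp\infty$ by steps of $\omega_3$. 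You instead combine the same ingredients (the inclusions $\widetilde{\Gamma}_x^-\subset\widetilde{\mathcal{D}}_y$, $\widetilde{\Gamma}_y^-\subset\widetilde{\mathcal{D}}_x$ and the mirror symmetries from Lemma~\ref{lem1}) into an explicit interval computation at each height.

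The gap you flag at the end is genuine and not closed. Your whole interval arithmetic — defining a single abscissa $\alpha(y_0)$ for $\widetilde{\Gamma}_x^+$, writing the slice of $\widetilde{\mathcal{D}}_x$ as the single interval $(\alpha,\ell_x\omega_2-\alpha)$, deducing $\alpha+\beta<\ell_x\omega_2$ from $\ell_x\omega_2-\alpha\in(\beta,\ell_x\omega_2+\omega_3-\beta)$ — presupposes that each of the four curves meets each horizontal line in exactly one point, equivalently that $\widetilde{\mathcal{D}}_x$ and $\widetilde{\mathcal{D}}_y$ have single-interval horizontal slices. Nothing proved so far in the paper gives this. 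Lemma~\ref{lem1} confines $\widetilde{\Gamma}_x^\pm$ to vertical strips of width $\omega_2/2$ and gives $\omega_1$-periodicity and the mirror symmetry, but the curve is a level set of $\vert x(\omega)\vert$, i.e.\ the preimage under the fractional-linear-of-$\wp$ map of a Euclidean circle, and level sets of that kind can a priori turn back in the vertical direction; ruling this out would need a separate monotonicity argument that neither the paper nor your sketch supplies. If the slice of $\widetilde{\mathcal{D}}_y$ at height $y_0$ splits into several intervals, the point $\ell_x\omega_2-\alpha$ could lie in a component disjoint from $(\beta,\ell_x\omega_2+\omega_3-\beta)$, and the key inequality $\alpha+\beta<\ell_x\omega_2$ is lost. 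The paper's argument deliberately sidesteps this by treating the boundary curves only as sets (it only needs that $\widetilde{\Gamma}_x^+$ is the left boundary and that $\widetilde{\tau}$ carries it inside $\widetilde{\mathcal{D}}$), so I would either prove the single-point property or switch to the paper's group-action route.
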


\begin{proof}
 Remind that $\widetilde{\mathcal{D}}$ is delimited by $\widetilde{\Gamma}_{x}^{+}$ and $\widetilde{\Gamma}_{y}^{+}$. With $\iota_{1}(\Gamma_{x}^{+})=\Gamma_{x}^{-}$ and $\iota_{2}(\Gamma_{x}^{-})\in \mathcal{D}_{y}$ we deduce that $\tau (\Gamma_{x}^{+})\subset \mathcal{D}$. Using $\widetilde{\tau}(\omega)=\omega+\omega_{3}$, we deduce the existence of $\ell_{1},\ell_{2}\in \Z$ such that $\widetilde{\tau}(\widetilde{\Gamma}_{x}^{+})\in \widetilde{\mathcal{D}}+\ell_{1}\omega_{1}+\ell_{2}\omega_{2}$. By Lemma \ref{lem1}, $\widetilde{\mathcal{D}}$ is $\omega_{1}$-periodic and we may assume $\ell_{1}=0$. 
 By Lemma \ref{lem1}, the left boundary of $\widetilde{\mathcal{D}}$ is  $\widetilde{\Gamma}_{x}^{+}$. By Lemma \ref{lem:formula_omega_3}, $\omega_{3}\in (0,\omega_{2})$, proving that $\ell_{2}=0$, since otherwise $\widetilde{\mathcal{D}}=\C$ which contradicts $\widetilde{\Gamma}_{x}^{+}\bigcap  \widetilde{\mathcal{D}}=\varnothing$. 
 We thus have 
\begin{equation}\label{eq7}
\widetilde{\tau} (\widetilde{\Gamma}_{x}^{+})\subset \widetilde{\mathcal{D}}.
\end{equation}
The map $\widetilde{\tau}$ being continuous, $\widetilde{\tau}(\widetilde{\mathcal{D}})$ is an open connected domain. Since $\widetilde{\Gamma}_{x}^{+}$ is the left boundary of the open set $\widetilde{\mathcal{D}}$, we deduce from \eqref{eq7} that there exists a nonempty open connected set $\mathcal{U}\subset\widetilde{\mathcal{D}}$ such that $\widetilde{\tau}(\mathcal{U})\subset \widetilde{\mathcal{D}}$. This shows that
$\widetilde{\tau}(\mathcal{U})\subset \widetilde{\mathcal{D}}\bigcap \widetilde{\tau}(\widetilde{\mathcal{D}})$.

A straightforward induction  yields that for all $k\in \N$, $\bigcup_{\ell=-k }^{k} \widetilde{\tau}^{\ell}(\widetilde{\mathcal{D}})$ is a connected domain with left boundary $\widetilde{\Gamma}_{x}^{+}-k\omega_{3}$ and right boundary  $\widetilde{\Gamma}_{y}^{+}+k\omega_{3}$. Since $\omega_{3}>0$, we deduce the result by making $k$ going to infinity.
\end{proof}

\begin{theo}
\label{theo:analytic_continuation}
The functions $F^1(x;t)$ and  $F^2(y;t)$ may be lifted to the universal cover of $\Etproj$. We will call respectively $\rx$ and $\ry$ the continuations. Seen as functions of $\omega$, they are meromorphic on $\C$ and satisfy 
\begin{align}
     \ry(\omega+\omega_{3};t)   &=\ry(\omega;t)+x(\omega)(y(\omega)-y(-\omega)), \label{eq:omega_3_per_ry} \\ 
     \ry(\omega+\omega_{1};t)&=  \ry(\omega;t),\label{eq:omega_1_per_ry}\\
     \rx(\omega+\omega_{3};t) & =\rx(\omega;t) +y(-\omega)(x(\omega)-x(\omega+\omega_{3})),\label{eq:omega_3_per_rx} \\
     \rx(\omega+\omega_{1};t)&=  \rx(\omega;t).\label{eq:omega_1_per_rx}
\end{align}
\end{theo}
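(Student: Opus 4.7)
The plan is to first derive the transformation rules for $F^{1}$ and $F^{2}$ under the involutions $\iota_{1},\iota_{2}$, then transport everything to the $\omega$-plane, and finally use the ``strip'' Lemma~\ref{lem:strip} together with the relation $\widetilde{\tau}(\omega)=\omega+\omega_{3}$ to extend meromorphically to all of $\C$.

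First I would record the two ``quasi-invariance'' identities. For $(x,y)\in\overline{E}_{t}$, equation \eqref{eq:funcequaonthecurve2} reads $F^{1}(x;t)+F^{2}(y;t)=K(0,0;t)Q(0,0;t)-xy$. Since $\iota_{1}$ fixes the first coordinate and $\iota_{2}$ the second, applying this relation at the point $(x,y)$ and at $\iota_{1}(x,y)=(x,y')$ and subtracting gives $F^{2}(y;t)-F^{2}(y';t)=x(y'-y)$; the symmetric computation with $\iota_{2}$ yields $F^{1}(x;t)-F^{1}(x';t)=y(x'-x)$. These identities hold on the open subset of $\overline{E}_{t}$ where all four points $(x,y)$, $\iota_{1}(x,y)$, $\iota_{2}(x,y)$, $\tau(x,y)$ simultaneously lie in $\mathcal{D}$, which is nonempty by Lemma~\ref{lem:strip} (take the image $\Lambda(\mathcal{U})$).

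Second, Section~\ref{sec:analcont} already provides meromorphic extensions of $F^{1}$ and $F^{2}$ to $\mathcal{D}=\mathcal{D}_{x}\cup\mathcal{D}_{y}$; lifting via $\Lambda$, I obtain meromorphic functions $r_{x}$ and $r_{y}$ on $\widetilde{\mathcal{D}}$. The $\omega_{1}$-periodicity of $r_{x},r_{y}$ on $\widetilde{\mathcal{D}}$ is immediate from the $(\omega_{1},\omega_{2})$-ellipticity of $\Lambda$ together with the $\omega_{1}$-periodicity of $\widetilde{\mathcal{D}}$ established in Lemma~\ref{lem1}(\ref{it:3}). Translating the two quasi-invariance identities through $\Lambda$ and using $\iup_{1}(\omega)=-\omega$, $\iup_{2}(\omega)=-\omega+\omega_{3}$, $\widetilde{\tau}(\omega)=\omega+\omega_{3}$, and the elementary facts $x\circ\iup_{1}=x$ and $y\circ\iup_{2}=y$ (so $r_{x}\circ\iup_{1}=r_{x}$ and $r_{y}\circ\iup_{2}=r_{y}$), I obtain on $\mathcal{U}\subset\widetilde{\mathcal{D}}\cap\widetilde{\tau}(\widetilde{\mathcal{D}})$ the relations
\begin{align*}
r_{y}(\omega+\omega_{3};t)&=r_{y}\bigl(\iup_{2}(\iup_{1}(\omega));t\bigr)=r_{y}(\iup_{1}(\omega);t)=r_{y}(\omega;t)+x(\omega)\bigl(y(-\omega)-y(\omega)\bigr),\\
r_{x}(\omega+\omega_{3};t)&=r_{x}\bigl(\iup_{2}(\iup_{1}(\omega));t\bigr)=r_{x}(-\omega;t)+y(-\omega)\bigl(x(\omega+\omega_{3})-x(\omega)\bigr),
\end{align*}
which, after using $r_{x}(-\omega;t)=r_{x}(\omega;t)$, coincide with \eqref{eq:omega_3_per_ry} and \eqref{eq:omega_3_per_rx}.

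Third, I use these two equations as the engine for analytic continuation. Solving, e.g., \eqref{eq:omega_3_per_ry} for $r_{y}(\omega;t)$ in terms of $r_{y}(\omega+\omega_{3};t)$ produces a meromorphic extension of $r_{y}$ to $\widetilde{\mathcal{D}}\cup(\widetilde{\mathcal{D}}-\omega_{3})$; solving for $r_{y}(\omega+\omega_{3};t)$ extends it to $\widetilde{\mathcal{D}}\cup(\widetilde{\mathcal{D}}+\omega_{3})$. Iterating, I obtain meromorphic $r_{y}$ on every $\widetilde{\tau}^{\ell}(\widetilde{\mathcal{D}})$, and by Lemma~\ref{lem:strip} the union $\bigcup_{\ell\in\Z}\widetilde{\tau}^{\ell}(\widetilde{\mathcal{D}})$ exhausts~$\C$. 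The same procedure works for $r_{x}$ using \eqref{eq:omega_3_per_rx}. The key subtlety, which is also where I expect the main technical work to lie, is consistency: the extended $r_{y}$ is a priori defined twice on the overlap $\widetilde{\tau}^{\ell}(\widetilde{\mathcal{D}})\cap\widetilde{\tau}^{\ell'}(\widetilde{\mathcal{D}})$. To handle this I would work on one connected component of the overlap, where the two candidate extensions are meromorphic functions agreeing on the nonempty open set $\widetilde{\tau}^{\min(\ell,\ell')}(\mathcal{U})$, hence everywhere on that component by the identity principle.

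Finally, the $\omega_{1}$-periodicity of the extended $r_{x},r_{y}$ follows by induction on $|\ell|$: the initial data is $\omega_{1}$-periodic, the translation $\omega\mapsto\omega+\omega_{1}$ commutes with $\widetilde{\tau}$, and the right-hand sides in \eqref{eq:omega_3_per_ry}--\eqref{eq:omega_3_per_rx} are $\omega_{1}$-periodic because $x(\omega)$, $y(\omega)$ and $y(-\omega)$ are elliptic functions, which gives \eqref{eq:omega_1_per_ry} and \eqref{eq:omega_1_per_rx} and completes the proof.
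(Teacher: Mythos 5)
Your proof follows essentially the same route as the paper: use the functional equation restricted to the curve together with the group action to derive the $\omega_3$-shift relations on a small open set, lift to the $\omega$-plane, exploit Lemma~\ref{lem1} for $\omega_1$-periodicity of $\widetilde{\mathcal D}$, and propagate by iterating $\widetilde\tau$ over the strips, whose union exhausts $\C$ by Lemma~\ref{lem:strip}. The paper derives the shift identities by applying changes of variable $\omega\mapsto-\omega+\ell\omega_2$ and $\omega\mapsto-\omega+\omega_3+\ell\omega_2$ to the functional equation and subtracting, whereas you first package the ``quasi-invariance'' of $F^1,F^2$ under $\iota_2,\iota_1$; these are the same computation organized slightly differently.

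Two small slips to fix. First, in both displayed lines your final expressions carry the wrong sign: from $r_y(\omega)-r_y(-\omega)=x(\omega)\bigl(y(-\omega)-y(\omega)\bigr)$ you should get $r_y(\omega+\omega_3)=r_y(-\omega)=r_y(\omega)+x(\omega)\bigl(y(\omega)-y(-\omega)\bigr)$, not the opposite sign; similarly for $r_x$, the correct conclusion is $r_x(\omega+\omega_3)=r_x(\omega)+y(-\omega)\bigl(x(\omega)-x(\omega+\omega_3)\bigr)$. Second, in your consistency step, the open set you can guarantee lies inside the overlap of two adjacent strips $\widetilde\tau^{\ell}(\widetilde{\mathcal D})\cap\widetilde\tau^{\ell+1}(\widetilde{\mathcal D})$ is $\widetilde\tau^{\ell+1}(\mathcal U)$, i.e.\ the image under the larger power of $\widetilde\tau$, since Lemma~\ref{lem:strip} gives $\widetilde\tau(\mathcal U)\subset\widetilde{\mathcal D}\cap\widetilde\tau(\widetilde{\mathcal D})$; your $\widetilde\tau^{\min(\ell,\ell')}(\mathcal U)$ should be $\widetilde\tau^{\max(\ell,\ell')}(\mathcal U)$. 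With these corrections the argument is the paper's.
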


\begin{proof}
By Lemma \ref{lem1}, $\widetilde{\mathcal{D}}$ is $\omega_{1}$-periodic, proving that $\widetilde{\mathcal{D}}$ is stable by addition by $\omega_{1}$. Since at this step of the continuation, $F^{1}$ is univalued as a function on $\mathcal{D}$, we find that the analytic continuation of $\rx$ on $\widetilde{\mathcal{D}}$ is $\omega_{1}$-periodic, see \eqref{eq:omega_1_per_rx}. The same holds for $\ry$, see \eqref{eq:omega_1_per_ry}.

Consider $x(\omega)$ and $y(\omega)$ as defined in Proposition \ref{prop:uniformization}.
From \eqref{eq:funcequaonthecurve2} we deduce that for all $\omega\in \widetilde{\mathcal{D}}$,
\begin{equation}
\label{eq3}
     x(\omega)y(\omega)+\rx(\omega;t)+\ry (\omega;t)-K(0,0;t)Q(0,0;t)=0. 
\end{equation}
With the same reasons as in the proof of  Lemma \ref{lem1}, there exist $\ell_{1},\ell_{2}\in \Z$ such that  $\iup_{1}(\widetilde{\Gamma}_{x}^{+})=\widetilde{\Gamma}_{x}^{-}+\ell_{1}\omega_{1}+\ell_{2}\omega_{2}$. Since $\widetilde{\Gamma}_{x}^{+}$ is $\omega_{1}$-periodic, we may take $\ell_{1}=0$. Let us set $\ell_{2}=\ell$ and apply $\omega\mapsto -\omega+\ell \omega_{2}$ to both sides of \eqref{eq3}. Using $\rx(\omega;t)=\rx(-\omega+\ell \omega_{2};t)$ (this follows from $\iup_{1}(\omega)=-\omega$ and $\iota_{1}(F^{1})=F^{1}$) we deduce that
\begin{equation}
\label{eqn:equgroupaction}
     x(-\omega)y(-\omega)+\rx(\omega;t)+\ry (-\omega+\ell\omega_{2};t)-K(0,0;t)Q(0,0;t)=0. 
\end{equation}
Consider the open connected set of Lemma \ref{lem:strip} such that $\widetilde{\tau}(\mathcal{U})\subset \widetilde{\mathcal{D}}\bigcap \widetilde{\tau}(\widetilde{\mathcal{D}})$. The shift $\omega\mapsto \omega+\omega_{3}$ maps $\mathcal{U}$ into $\widetilde{\mathcal{D}}$. Apply $\omega\mapsto -\omega+\omega_{3}+\ell\omega_{2}$ to both sides of \eqref{eqn:equgroupaction}. Using $\ry(-\omega+\ell\omega_{2};t)=\ry(\omega+\omega_{3};t)$ (this follows from $\iup_{2}(\omega)=-\omega+\omega_{3}$ and $\iota_{2}(F^{2})=F^{2}$), we obtain that for all $\omega\in \mathcal{U}$
\begin{equation}\label{eqn:equgroupaction2}
     x(\omega+\omega_{3})y(\omega+\omega_{3}) + \rx (\omega+\omega_{3};t)+\ry (-\omega+\ell\omega_{2};t)-K(0,0;t)Q(0,0;t)=0.
\end{equation}
Subtracting  \eqref{eq3} to \eqref{eqn:equgroupaction} (resp.\ \eqref{eqn:equgroupaction} to \eqref{eqn:equgroupaction2}), we find
\begin{align*}
\ry (-\omega+\ell\omega_{2};t)  - \ry (\omega;t)     & =x(\omega)y(\omega)-x(-\omega)y(-\omega) ,   \\
\rx (\omega+\omega_{3};t)  - \rx (\omega;t) &=  x(-\omega)y(-\omega) -x(\omega+\omega_{3})y(\omega+\omega_{3}).   
\end{align*}
We now use $\ry(-\omega+\ell\omega_{2};t)=\ry(\omega+\omega_{3};t)$, $x(-\omega)=x(\omega)$ and $y(-\omega)=y(\omega+\omega_{3})$ to deduce that \eqref{eq:omega_3_per_ry} and \eqref{eq:omega_3_per_rx} are valid for every $\omega\in \mathcal{U}$. Let us now define $\rx'$ and $\ry'$, analytic on $\widetilde{\tau}(\widetilde{\mathcal{D}})$, by means of the formulas, for all 
$\omega\in \widetilde{\mathcal{D}}$, 
\begin{align*}
     \rx' (\omega+\omega_{3})&=\rx (\omega)+y(-\omega)(x(\omega)-x(\omega+\omega_{3})),\\
     \ry' (\omega+\omega_{3})&=\ry (\omega)+x(\omega)(y(\omega)-y(-\omega)).
\end{align*}
From what precedes, $\rx$ and $\rx'$ (resp.\ $\ry$ and $\ry'$) are equal on $\widetilde{\tau} (\mathcal{U})\subset \widetilde{\mathcal{D}}\bigcap \widetilde{\tau}(\widetilde{\mathcal{D}})$, see Lemma~\ref{lem:strip}, so the analytic continuation principle allows us to continue $\rx$ and $\ry$ on the connected set $\widetilde{\mathcal{D}}\bigcup \widetilde{\tau}(\widetilde{\mathcal{D}})$. Iterating the same reasoning, we find that generating functions $\rx$ and $\ry$ as well as the identities \eqref{eq:omega_3_per_ry} and \eqref{eq:omega_3_per_rx} can be extended to the whole connected domain $ \bigcup_{\ell\in \Z} \widetilde{\tau}^{\ell}(\widetilde{\mathcal{D}})$. With Lemma \ref{lem:strip}, the latter is $\C$. By construction, the lifts are $\omega_{1}$-periodic, as already stated. 
\end{proof}

\begin{rem}
\label{rem2}
So far we have considered walks that start at the origin $(0,0)$. We could similarly handle models of walks starting at the point $(i,j)$ with probability $p_{i,j}$, such that $\sum_{i,j} p_{i,j}=1$. In this situation, the functional equation satisfied by the generating function is 
\begin{equation*}
     K(x,y;t)Q(x,y;t)=\sum_{i,j\geq 0} p_{i,j}x^{i+1}y^{j+1}+F^{1}(x;t) +F^{2}(y;t)-K(0,0;t) Q(0,0;t).
\end{equation*}     
Using exactly the same strategy, we may prove that the functions $\rx$ and  $\ry$ may be continued to $\C$. They are $\omega_{1}$-periodic and satisfy
\begin{align*}
&\ry (\omega+\omega_{3};t)  - \ry (\omega;t) \hspace{0.3mm} =\sum_{i,j\geq 0} p_{i,j}x^{i+1}(\omega)y^{j+1}(\omega)-\sum_{i,j\geq 0} p_{i,j}x^{i+1}(-\omega)y^{j+1}(-\omega), \\ 
&\rx (\omega+\omega_{3};t)  - \rx (\omega;t)  = \hspace{0.8mm} \sum_{i,j\geq 0} p_{i,j}x^{i+1}(-\omega)y^{j+1}(-\omega)-\sum_{i,j \geq 0} p_{i,j}x^{i+1}(\omega+\omega_{3})y^{j+1}(\omega+\omega_{3}). 
\end{align*}
\end{rem}

\section{Sufficient conditions for differential transcendence}
\label{sec:transcendence}

Throughout this section, we assume that $\widetilde{\tau}$ has infinite order, which is equivalent to doing the hypothesis that the group is infinite.  Our main results are to derive differential transcendence criteria for $\rx$ and $\ry$. 
By Theorem \ref{theo:analytic_continuation}, these functions satisfy  difference equations of the form 
\begin{equation*}
     {\widetilde{\tau}(f) - f = b},
\end{equation*}
with $\widetilde\tau$ defined in \eqref{eq:expression_group_universal_cover}. Galois theoretic methods to study the differential properties of such functions have been developed in \cite{HS,DHR,DHRS,DHRS2}, see also \cite{HAR16}. In this section we describe a consequence of the latter theory and show how it will be used to prove that in many cases, $x\mapsto Q(x,0;t)$ and $y\mapsto Q(0,y;t)$ are differentially transcendental.

\subsection{Background of difference Galois theory}

We remind that $\Etproj$ is an elliptic curve. The field $\mer(\Etproj)$ of meromorphic functions on the elliptic curve may be identified, via the Weierstrass elliptic function, to the field of meromorphic functions on $\C$ which are $(\omega_{1},\omega_{2})$-periodic. We have a natural derivation on this field given by the $\omega$-derivative $\partial_{\omega}$. As Theorem~\ref{theo:analytic_continuation} shows, the continuations of $\rx$ and $\ry$ belong to $\mer(\C)$, the field of meromorphic functions on $\mathbb C$. The latter may be equipped with the derivation $\partial_{\omega}$, and the inclusion 
\begin{equation*}
     \bigl(\mer(\Etproj),\partial_{\omega}\bigr)\subset \bigl(\mer(\C),\partial_{\omega}\bigr)
\end{equation*}
of differential fields holds. 

\begin{defi}\label{defi1}
Let $(E,\partial_{\omega})\subset (F,\partial_{\omega})$ be differential fields. 
We say that $f\in F$ is {\rm  differentially algebraic} over $E$ if it satisfies a nontrivial algebraic differential equation with coefficients in $E$, i.e., if for some $m$ there exists a nonzero polynomial ${P(y_0, \ldots , y_m) \in E[y_0, \ldots , y_m]}$ such that
$$
P(f,\partial_{\omega}(f), \ldots, \partial_{\omega}^m(f)) = 0.
$$
 We say that  $f$  is differentially transcendental over $E$ if it is not differentially algebraic.  
\end{defi}

Our first remark is that  by \cite[Lemmas~6.3 and 6.4]{DHRS}, the series $x\mapsto Q(x,0;t)$ is differentially algebraic over $\C(x)$ if and only if $\omega\mapsto\rx(\omega;t)$ is differentially algebraic over $\mer(\Etproj)$. And symmetrically the same holds for $y\mapsto Q(0,y;t)$ and $\omega\mapsto\ry(\omega;t)$. We may therefore focus on $\rx$ and $\ry$.

\begin{defi}
\label{defi-field}
A $(\partial_{\omega},\widetilde{\tau})$-field is a triple $(K,\partial_{\omega}, \widetilde{\tau})$, where $K$ is a field, $\partial_{\omega}$ is a derivation on $K$, $\widetilde{\tau}$ is an automorphism of $K$, and where $\partial_{\omega}$ and $\widetilde{\tau}$ commute on $K$. 
\end{defi}  

Since by \eqref{eq:expression_group_universal_cover} $\widetilde{\tau}(\omega)=\omega+\omega_{3}$, we deduce that on $\mer(\C)$, 
\begin{equation*}
     \widetilde{\tau} \circ \partial_{\omega} =\partial_{\omega}\circ \widetilde{\tau}.
\end{equation*}
Then the triples $\bigl(\mer(\Etproj), \partial_{\omega},\widetilde{\tau}\bigr)$ and  $\bigl(\mer(\C), \partial_{\omega},\widetilde{\tau}\bigr)$ provide examples for Definition \ref{defi-field}.

Proposition~2.6 of \cite{DHR} gives a criterion for differential transcendence in the above general setting; we are now going to translate it in our context. The result of \cite{DHR} only requires the assumption to embed the solutions $\rx$ and $\ry$ into a $(\partial_{\omega},\widetilde{\tau})$-field. This is done due to Theorem~\ref{theo:analytic_continuation} and, remarkably, this is the only point where analytic tools are needed in this section. 

\begin{propo}[\cite{DHR}]
\label{prop:caract_hyperalg} 
Let $b\in \mer(\Etproj)$ and $f \in \mer(\C)$. Assume that 
\[\widetilde{\tau}(f) - f = b.\]
If $f$ is differentially algebraic over $\mer(\Etproj)$, then there exist an integer $n \geq 0$, ${c_0,\ldots,c_{n-1} \in \C}$ and $g \in \mer(\Etproj)$, such that 
\begin{equation}
\label{eq:lindiffb}
    \partial_{\omega}^n(b)+c_{n-1}\partial_{\omega}^{n-1}(b)+\cdots+c_{1}\partial_{\omega}(b)+c_{0} b =\widetilde{\tau}(g)-g. 
\end{equation}
\end{propo}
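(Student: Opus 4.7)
My plan is to invoke the parameterized Picard--Vessiot (PPV) framework of Hardouin--Singer \cite{HS}, extended in \cite{DHR}, applied to the difference--differential field $(K,\partial_{\omega},\widetilde{\tau})$ with $K=\mer(\Etproj)$. First I observe that since $\widetilde{\tau}$ is translation by $\omega_{3}$, it commutes with $\partial_{\omega}$, so applying $\partial_{\omega}^{k}$ to $\widetilde{\tau}(f)-f=b$ yields $\widetilde{\tau}(\partial_{\omega}^{k}f)-\partial_{\omega}^{k}f=\partial_{\omega}^{k}b$ for every $k\geq 0$. Thus every derivative of $f$ satisfies an inhomogeneous $\widetilde{\tau}$-equation whose right-hand side lies in $K$.

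Next I would attach to the rank-one equation $\widetilde{\tau}(y)-y=b$ its PPV Galois group $G$, a linear $\partial_{\omega}$-algebraic subgroup of the additive group $\Ga$ defined over the field of $(\partial_{\omega},\widetilde{\tau})$-constants. The classification of $\partial_{\omega}$-subgroups of $\Ga$ due to Cassidy is elementary: either $G=\Ga$, or $G=\ker L$ for some nonzero linear differential operator
\[L=\partial_{\omega}^{n}+c_{n-1}\partial_{\omega}^{n-1}+\cdots+c_{1}\partial_{\omega}+c_{0}\]
with constant coefficients $c_{i}\in\C$. The bridge with differential algebraicity, which is the core dictionary of \cite{HS,DHR}, states that $f$ is differentially algebraic over $K$ if and only if the $\partial_{\omega}$-transcendence degree of $G$ over the constants vanishes; for a subgroup of $\Ga$ this forces $G\subsetneq\Ga$ and puts us in the second case.

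The conclusion then follows from the Galois correspondence. Since $G\subseteq\ker L$ acts on $f$ by translations $f\mapsto f+\gamma$ with $\gamma\in\ker L$, the element $L(f)=\partial_{\omega}^{n}f+c_{n-1}\partial_{\omega}^{n-1}f+\cdots+c_{0}f$ is $G$-invariant and therefore belongs to $K=\mer(\Etproj)$. Setting $g:=L(f)$ and using that $\widetilde{\tau}$ commutes with $\partial_{\omega}$ and fixes the constants $c_{i}$, I would compute
\[\widetilde{\tau}(g)-g=L\bigl(\widetilde{\tau}(f)-f\bigr)=L(b),\]
which is exactly the identity \eqref{eq:lindiffb}.

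The main obstacle is not the algebraic skeleton above, which is standard once the theory is set up, but the justification that the PPV formalism is actually applicable in our analytic setting: the field of $(\partial_{\omega},\widetilde{\tau})$-constants inside $\mer(\Etproj)$ is delicate because $\widetilde{\tau}$ has infinite order on $\Etproj$, and the solution $f$ a priori only lives in the much larger field $\mer(\C)$. This is precisely the ambient issue that Theorem \ref{theo:analytic_continuation} (existence of a meromorphic lift to the universal cover $\C$) is tailored to handle, and the formal extraction of the conclusion is carried out in \cite[Proposition~2.6]{DHR}, which I would quote directly rather than redo from scratch.
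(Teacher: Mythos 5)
Your outline follows the same route as the reference [DHR, Proposition 2.6] that the paper invokes without reproving: set up the parameterized Picard--Vessiot theory of \cite{HS,DHR} for the rank-one equation $\widetilde{\tau}(y)-y=b$, identify the Galois group with a linear $\partial_\omega$-algebraic subgroup of $\Ga$, apply Cassidy's classification so that differential algebraicity of a solution forces $G=\ker L$ for a constant-coefficient operator $L$, and then extract $L(b)=\widetilde\tau(g)-g$ from the Galois correspondence. Since the paper itself merely cites the result, there is no independent proof to compare against, and your sketch is a faithful reconstruction of the cited argument.

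One step deserves a caveat that you gesture at but do not fully resolve. You write that $L(f)$ is $G$-invariant and therefore in $K=\mer(\Etproj)$. Strictly speaking the Galois correspondence applies to the abstract PPV ring $R$: one has $R^{G}=K$. Your concrete $f$ lives in $\mer(\C)$, which is not a priori inside $R$. The clean version of the argument instead takes an abstract solution $u\in R$, deduces $g:=L(u)\in R^{G}=K$ and $\widetilde\tau(g)-g=L(b)$, and only afterwards relates $u$ to $f$ by noting that $f-u$ is $\widetilde\tau$-invariant in a common overfield with the same $\partial_\omega$-constants, so $f$ and $u$ have the same differential transcendence properties over $K$. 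This matching of constants is exactly why the hypotheses of [DHR, Proposition 2.6] are stated as they are and why the embedding furnished by Theorem~\ref{theo:analytic_continuation} matters. You flag this as "the main obstacle," which is the right diagnosis; making the $u$ versus $f$ distinction explicit would close the gap. Also, a small imprecision: the Galois group is a differential algebraic group defined over the $\partial_\omega$-constants of the PPV extension (which coincide with $\C$ here), not over the $(\partial_\omega,\widetilde\tau)$-constants.
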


Let
\begin{equation}
\label{eq:first_def_b_1_b_2}
     b_{1}= \iota_1(y)(x-\tau (x))\quad \text{and} \quad b_{2}=x(y-\iota_1(y))
\end{equation}     
be the quantities that appear in the right-hand sides of Equations \eqref{eq:omega_3_per_rx} and \eqref{eq:omega_3_per_ry} of Theorem \ref{theo:analytic_continuation}.
By \cite[Corollary 3.9 and Proposition 3.10]{DHRS}, Proposition \ref{prop:caract_hyperalg} has the following consequence:

\begin{coro}
\label{cor:prop:caract_hyperalg}
Assume that $b_{1}$ (resp.\ $b_{2}$) in \eqref{eq:first_def_b_1_b_2} has a pole $P \in \Etproj$ of order $m \geq 1$ such that none of the $\tau^k(P)$ with $k \in \Z \setminus \{0\}$ is a pole of order $\geq m$ of $b_{1}$ (resp.\ $b_{2}$). Then $x\mapsto Q(x,0;t)$ and $y\mapsto  Q(0,y;t)$ are differentially transcendental over $\C(x)$ and $\C(y)$, respectively. 
\end{coro}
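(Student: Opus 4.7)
The plan is to deduce the corollary from Proposition \ref{prop:caract_hyperalg} via a pole-order argument on the $\tau$-orbit of $P$, following \cite[Corollary 3.9 and Proposition 3.10]{DHRS}. First I would invoke Theorem \ref{theo:analytic_continuation} to rewrite the equations for $r_x$ and $r_y$ as the difference equations $\widetilde{\tau}(r_x) - r_x = b_1$ and $\widetilde{\tau}(r_y) - r_y = b_2$ in $\mer(\C)$. Specializing the functional equation \eqref{eq:funcequ} to the kernel curve gives the identity $r_x(\omega) + r_y(\omega) + x(\omega)y(\omega) - K(0,0;t)Q(0,0;t) = 0$ in $\mer(\C)$, so $r_x$ is differentially algebraic over $\mer(\Etproj)$ if and only if $r_y$ is. Combined with \cite[Lemmas 6.3 and 6.4]{DHRS}, this reduces the problem to showing that, under the pole hypothesis on either of $b_1$ or $b_2$, the corresponding function $r_x$ or $r_y$ is not differentially algebraic over $\mer(\Etproj)$; by the symmetry of the argument I would treat only the case $b := b_1$, $r_x$.

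Assuming then that $r_x$ is differentially algebraic, Proposition \ref{prop:caract_hyperalg} supplies an integer $n \geq 0$, constants $c_0, \ldots, c_{n-1} \in \C$ and $g \in \mer(\Etproj)$ such that
\[
L(b) := \partial_\omega^n(b) + c_{n-1}\partial_\omega^{n-1}(b) + \cdots + c_0 b \;=\; \widetilde{\tau}(g) - g,
\]
which after telescoping yields $\widetilde{\tau}^N(g) - g = \sum_{k=0}^{N-1}\widetilde{\tau}^k(L(b))$ for every $N \geq 1$. The next step would be to track pole orders along the orbit $\{\tau^k(P)\}$. Since differentiation strictly increases the pole order of a meromorphic function, $L(b)$ has a pole of order exactly $m + n$ at $P$, the leading term $\partial_\omega^n(b)$ dominating the lower-order summands and precluding any cancellation. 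By the standing hypothesis, the pole of $L(b)$ at any $\tau^k(P)$ with $k\neq 0$ has order at most $(m-1)+n < m+n$. Hence, for each $k_0 \in\{0,\dots,N-1\}$, the summand $\widetilde{\tau}^{k_0}(L(b))$ is the unique contributor of a pole of order $m+n$ to $\sum_{k=0}^{N-1} \widetilde{\tau}^k(L(b))$ at the point $\tau^{-k_0}(P)$, so that $\widetilde{\tau}^N(g) - g$ acquires a pole of order exactly $m+n$ at each of the $N$ distinct points $P, \tau^{-1}(P), \ldots, \tau^{-(N-1)}(P)$.

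The hard part will be the final contradiction, which exploits the finiteness of the pole set of $g$ against the infiniteness of the orbit. Since $\widetilde{\tau}$ has infinite order on $\Etproj$, the orbit $\{\tau^k(P)\}$ is an infinite subset of $\Etproj$, and the elliptic function $g$ has only finitely many poles there; let $S$ denote the finite set of integers $j$ such that $\tau^j(P)$ is a pole of $g$. Then $\widetilde{\tau}^N(g) - g$ can only have poles along the orbit at points of $\{\tau^j(P) : j \in S\} \cup \{\tau^{j-N}(P) : j \in S\}$, which contains at most $2|S|$ elements of the orbit. Choosing $N > 2|S|$ and applying the pigeonhole principle produces some $k_0 \in \{0,\dots,N-1\}$ with $-k_0 \notin S$ and $N - k_0 \notin S$, at which $\widetilde{\tau}^N(g) - g$ has no pole at $\tau^{-k_0}(P)$, contradicting the preceding paragraph and completing the proof.
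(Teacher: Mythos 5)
Your proof is correct, and it reconstructs the argument that the paper delegates to \cite[Corollary 3.9 and Proposition 3.10]{DHRS}: telescope the identity $\partial_\omega^n(b)+\cdots+c_0 b=\widetilde\tau(g)-g$ to $\widetilde\tau^N(g)-g=\sum_{k=0}^{N-1}\widetilde\tau^k(L(b))$, observe that $L(b)$ has its maximal pole order $m+n$ only at $P$ along the orbit, and derive a contradiction with the finiteness of the pole set of the elliptic function $g$ for $N$ large, while using the specialization of \eqref{eq:funcequ} on the curve to transfer the conclusion from one of $r_x,r_y$ to the other. This is the same approach the paper relies on.
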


\begin{rem}
As in the end of Section \ref{sec:analcont}, we may consider walks starting at the point $(i,j)$ with probability $p_{i,j}$, $\sum_{i,j} p_{i,j}=1$. A similar criterion to Corollary \ref{cor:prop:caract_hyperalg} might be derived but it would be less effective, since the second member of the equation appearing in Remark \ref{rem2} may have many poles. 
\end{rem}

\subsection{Differential transcendence for genus one walks}

In this section we consider walks of genus one and derive criteria that ensure that the functions $x\mapsto Q(x,0;t)$  and $y\mapsto Q(0,y;t)$ are differentially transcendental over $\C(x)$ and $\C(y)$, respectively. These criteria are strong and tractable enough to show that these functions are differentially transcendental in many concrete weighted cases. 
 
As suggested by Corollary \ref{cor:prop:caract_hyperalg}, we are now interested in the poles of $b_{1}$ and $b_{2}$ in \eqref{eq:first_def_b_1_b_2}. In the unweighted case, see \cite{DHRS}, a set of simple criteria to apply Corollary~\ref{cor:prop:caract_hyperalg} has been given. As we wrote in the introduction, the only serious obstruction to the extension of the results to the weighted case was the analytic continuation of the generating functions, which was unknown in this setting. Since the latter has been proved in this paper (our Theorem \ref{theo:analytic_continuation}), the arguments of \cite{DHRS} may be straightforwardly adapted to the weighted case. For this reason, the results will be given without proof. 

Let us compute the poles of $b_{2}=x(y-\iota_1(y))$. Let $P_{1},P_{2}$ be the poles of $x$ and $Q_{1},Q_{2}$ be the poles of $y$. We may prove that the poles of $b_{2}$ are
\begin{equation*}
     \{P_1, P_2, Q_1, Q_2 , \iota_1(Q_1), \iota_1(Q_2)\}.
\end{equation*}
We are going to see the poles of  $b_{1}$ and $b_{2}$ as elements of $\mathbb{P}^{1}(\C)^2$. 

We split the analysis in three cases: generic case (Theorem \ref{thm:generic1}), double pole case (Theorem~\ref{thm:generic2}) and triple pole case (Theorem \ref{thm:generic3}).
 
\subsubsection*{Generic case} 
This is the situation where at least one of the two quantities 
\begin{equation}
\label{gc}
     \frac{\Delta^x_{[1:0]}}{t^2}=d_{1,0}^2-4d_{1,-1}d_{1,1}
     \quad \text{and}\quad
     \frac{\Delta^y_{[1:0]}}{t^2}=d_{0,1}^2-4d_{-1,1}d_{1,1}
\end{equation}   
is not a square in $\mathbb{Q}(d_{i,j},t)$. Note that almost every choice of the $d_{i,j}$ leads to a generic case.
\begin{theo}[\cite{DHRS}, Proposition 5.1]
\label{thm:generic1} 
If at least one of of the quantities in \eqref{gc} is not a square in $\mathbb{Q}(d_{i,j},t)$, then the generating functions $x\mapsto Q(x,0;t)$ and $y\mapsto Q(0,y;t)$ are differentially transcendental over $\C(x)$ and $\C(y)$, respectively.
\end{theo}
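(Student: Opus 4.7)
The strategy is to verify the hypothesis of Corollary \ref{cor:prop:caract_hyperalg} by exhibiting a simple pole of $b_{2}$ (or, by the symmetric argument, of $b_{1}$) whose $\tau$-orbit avoids the other poles. First I would analyze the divisor of $b_{2}=x(y-\iota_{1}(y))$ on $\Etproj$: via the uniformization of Section \ref{subsec:explicit_parametrization}, the poles of $x$ are the two points $P_{1},P_{2}$ above $[1\!:\!0]$, whose $y$-coordinates are the roots of the quadratic $\overline{K}([1\!:\!0],y_{0},y_{1};t)=0$ with discriminant $\Delta^x_{[1:0]}$; symmetrically, the poles $Q_{1},Q_{2}$ of $y$ have $x$-coordinates controlled by $\Delta^y_{[1:0]}$. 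A direct computation shows that the pole set of $b_{2}$ is contained in $\{P_{1},P_{2},Q_{1},Q_{2},\iota_{1}(Q_{1}),\iota_{1}(Q_{2})\}$, each pole being simple in non-exceptional weight configurations. Note that the quantities in \eqref{gc} are precisely $\Delta^x_{[1:0]}/t^{2}$ and $\Delta^y_{[1:0]}/t^{2}$, so the hypothesis of the theorem is exactly a non-squareness statement about one of the two relevant discriminants.

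Next, assume without loss of generality that $\Delta^x_{[1:0]}/t^{2}$ is not a square in $F:=\Q(d_{i,j},t)$ (otherwise argue symmetrically with $b_{1}$). Setting $L=F(\sqrt{\Delta^x_{[1:0]}})$, the points $P_{1},P_{2}$ form a $\mathrm{Gal}(L/F)$-conjugate pair, neither defined over $F$. Since $\tau$ is an $F$-rational birational self-map of $\Etproj$, the Galois action of $L/F$ commutes with every iterate $\tau^{k}$. I would take $P=P_{1}$ and rule out $\tau^{k}(P_{1})\in\{P_{2},Q_{1},Q_{2},\iota_{1}(Q_{1}),\iota_{1}(Q_{2})\}$ for every $k\in\mathbb Z\setminus\{0\}$: if the target $R$ is $F$-rational, applying the nontrivial involution of $L/F$ yields $\tau^{k}(P_{2})=\tau^{k}(P_{1})=R$, forcing $P_{1}=P_{2}$, a contradiction. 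The residual case $\tau^{k}(P_{1})=P_{2}$ translates, in the $\omega$-uniformization, into the linear congruence $k\omega_{3}\equiv -2\omega_{P_{1}}\pmod{\mathbb Z\omega_{1}+\mathbb Z\omega_{2}}$; as $\widetilde{\tau}$ has infinite order, this admits at most one solution, which is eliminated (as in \cite{DHRS}) by a small parameter perturbation or by switching to $b_{1}$.

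The principal obstacle is the sub-case in which $\Delta^y_{[1:0]}/t^{2}$ is also a non-square and the product $\Delta^x_{[1:0]}\cdot\Delta^y_{[1:0]}$ is a square in $F$: then $F(\sqrt{\Delta^x_{[1:0]}})=F(\sqrt{\Delta^y_{[1:0]}})$, so none of $Q_{j},\iota_{1}(Q_{j})$ is $F$-rational and the single Galois swap does not automatically separate the two orbit classes. The resolution is to run the analogous pole-and-orbit analysis for $b_{1}=\iota_{1}(y)(x-\tau(x))$, whose pole set involves a different arrangement of $F$-conjugate pairs, and to check that at least one of the two parallel arguments succeeds in each sub-case of the non-squareness hypothesis. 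Once the hypothesis of Corollary \ref{cor:prop:caract_hyperalg} is met, the differential transcendence of both $x\mapsto Q(x,0;t)$ and $y\mapsto Q(0,y;t)$ over $\C(x)$ and $\C(y)$, respectively, follows at once.
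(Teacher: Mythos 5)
The paper omits the proof of this theorem, saying only that once Theorem \ref{theo:analytic_continuation} is established, the argument of \cite{DHRS}, Proposition~5.1 (pole analysis of $b_1,b_2$ combined with the Galois action, fed into Corollary \ref{cor:prop:caract_hyperalg}) carries over verbatim; your proposal follows precisely that strategy, and your central Galois step (an $F$-rational target $R$ of $\tau^k(P_1)$ forces $P_1=P_2$) is correct and is the one the paper alludes to.

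There is, however, a genuine gap in your treatment of the residual orbit collisions. For $\tau^{k}(P_{1})=P_{2}$ you invoke ``a small parameter perturbation or switching to $b_{1}$,'' and neither works: the statement is for a fixed $t$, and the hypothesis of Corollary \ref{cor:prop:caract_hyperalg} is a condition at that specific $t$, so nothing transfers from nearby parameters; while the pole divisor of $b_1=\iota_1(y)(x-\tau(x))$ already contains both $P_j$ and $\widetilde{\tau}^{-1}(P_j)$, so the points $P_j$ are automatically \emph{not} isolated in their $\tau$-orbit for $b_1$, and switching does not help. The same vagueness affects your third paragraph, where you only gesture at a case check when $F(\sqrt{\Delta^x_{[1:0]}})=F(\sqrt{\Delta^y_{[1:0]}})$. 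The clean resolution is simply to iterate the Galois-commutation argument you already set up. If $\tau^{k}(P_{1})=P_{2}$ with $k\neq 0$, apply $\sigma$ (which commutes with the $F$-rational $\tau^k$) to obtain $\tau^{k}(P_{2})=\sigma(P_2)=P_{1}$, hence $\tau^{2k}(P_{1})=P_{1}$; but $\widetilde{\tau}^{2k}$ is a nonidentity translation of $\C/(\Z\omega_1+\Z\omega_2)$ (since $\widetilde{\tau}$ has infinite order), hence fixed-point free, a contradiction. Likewise, if $\tau^{k}(P_{1})=R$ with $R\in\{Q_j,\iota_1(Q_j)\}$ not $F$-rational, applying $\sigma$ gives $\tau^{k}(P_{2})=\sigma(R)$; writing both congruences in the $\omega$-plane using $\omega_{P_2}=-\omega_{P_1}$, $\iup_1(\omega)=-\omega$, and $\iup_2(\omega)=-\omega+\omega_3$, and adding them, eliminates $\omega_{P_1}$ and produces $(2k+c)\,\omega_3\equiv 0\pmod{\Z\omega_1+\Z\omega_2}$ with $c\in\{-1,0,1\}$, again contradicting the infinite order of $\widetilde{\tau}$. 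This closes all sub-cases without any auxiliary device.
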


\begin{coro}
Assume $d_{i,j}\in \Q$ and $t$ transcendental. If at least one of of the quantities in \eqref{gc} is not a square in $\mathbb{Q}$, then $x\mapsto Q(x,0;t)$ and $y\mapsto Q(0,y;t)$ are differentially transcendental over $\C(x)$ and $\C(y)$, respectively.
\end{coro}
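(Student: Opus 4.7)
The plan is to derive this corollary as a direct specialization of Theorem \ref{thm:generic1}. The only thing that needs to be checked is that the hypothesis "not a square in $\mathbb{Q}$" implies the hypothesis "not a square in $\mathbb{Q}(d_{i,j},t)$" of Theorem \ref{thm:generic1}, under the assumptions that $d_{i,j}\in\mathbb{Q}$ and $t$ is transcendental over $\mathbb{Q}$.

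First I would observe that when $d_{i,j}\in\mathbb{Q}$, the field $\mathbb{Q}(d_{i,j},t)$ reduces to $\mathbb{Q}(t)$. The two quantities $d_{1,0}^2-4d_{1,-1}d_{1,1}$ and $d_{0,1}^2-4d_{-1,1}d_{1,1}$ then belong to $\mathbb{Q}$. The core algebraic fact to invoke is the following: an element $q\in\mathbb{Q}$ is a square in $\mathbb{Q}(t)$ if and only if it is a square in $\mathbb{Q}$. Indeed, if $q = (P(t)/R(t))^2$ with $P,R\in\mathbb{Q}[t]$ coprime and $R\neq 0$, then $q\,R(t)^{2}=P(t)^{2}$ in $\mathbb{Q}[t]$. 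Since $t$ is transcendental, comparing degrees forces $\deg P=\deg R$, and a quick look at the unique factorization in $\mathbb{Q}[t]$ (or equivalently in $\overline{\mathbb{Q}}[t]$) shows that $P$ and $R$ are associate, so $P/R\in\mathbb{Q}$, which yields $\sqrt{q}\in\mathbb{Q}$.

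Applying the contrapositive, if one of the quantities in \eqref{gc} is not a square in $\mathbb{Q}$, it is not a square in $\mathbb{Q}(t)=\mathbb{Q}(d_{i,j},t)$, and the hypothesis of Theorem \ref{thm:generic1} is satisfied. The conclusion of that theorem provides the differential transcendence of $x\mapsto Q(x,0;t)$ over $\mathbb{C}(x)$ and of $y\mapsto Q(0,y;t)$ over $\mathbb{C}(y)$, which is exactly what the corollary asserts.

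There is essentially no obstacle here: the whole proof is two lines, the only non-triviality being the elementary lemma that $\mathbb{Q}\cap \mathbb{Q}(t)^{\times 2} = \mathbb{Q}^{\times 2}$ (augmented with $0$). No analytic or Galois-theoretic material beyond what Theorem \ref{thm:generic1} already provides is needed.
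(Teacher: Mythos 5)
Your proposal is correct and is exactly the intended argument: the paper states this corollary without proof as an immediate specialization of Theorem \ref{thm:generic1}, and the only content to supply is precisely the elementary fact that a rational number which is a square in $\mathbb{Q}(t)$ (with $t$ transcendental) must already be a square in $\mathbb{Q}$, which you prove correctly via degree comparison and unique factorization in $\mathbb{Q}[t]$.
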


\begin{ex}
Consider the leftmost model on Figure \ref{fig:illustration_theorems_infinite}. This is an example that comes from a projection of a three-dimensional model, and to which Theorem~\ref{thm:generic1} applies. We have 
\begin{equation*}
     \frac{\Delta^y_{[1:0]}}{t^2}=d_{0,1}^2-4d_{-1,1}d_{1,1}=-\frac{1}{3},
\end{equation*}
which obviously is not a square in $\mathbb{Q}(d_{i,j},t)$ (remind that $t$ is real).
\end{ex}

\begin{ex}\label{ex1}
Consider the walk with $d_{-1,1} = d_{1,1} = d_{1,-1} = d_{0,-1} = 1/4$ and all other $d_{i,j} = 0$. By \eqref{eq:kernelwalk} the kernel curve $\Etproj$ is defined by
\begin{equation*}
     \overline{K}(x_0,x_1, y_0, y_1; t) = x_0x_1y_0y_1 - \frac{t}{4}(x_1^2y_0^2 + x_0^2y_0^2 + x_0^2 y_1^2+x_0x_1y_1^2)=0.
\end{equation*}
To compute the poles $P_{1}$ and $P_{2}$ of $x$, we have to solve 
$\overline{K}(1,0, y_0, y_1; t)=0$, which gives the simple equation
$$ \frac{t}{4}(y_0^2 + y_1^2)=0.$$
Its solutions in $\P1(\C)$ are $[\mathbf{i}\!:\!1]$ and  $[- \mathbf{i}\!:\!1]$, so 
$$P_{1},P_{2}=([ 1\!:\!0],[ \pm \mathbf{i}\!:\!1]).$$
Likewise, in order to compute the poles $Q_{1},Q_{2}$ of $y$, we solve $\overline{K}(x_0,x_1, 1, 0; t)=0$ and find 
 $$Q_{1},Q_{2}=([ \pm \mathbf{i}\!:\!1],[ 1\!:\!0]).$$
To compute $\iota_1(Q_1), \iota_1(Q_2)$ we have to find the other root of $\overline{K}(\pm \mathbf{i},1, y_{0}, y_{1}; t)=0$. We obtain 
\begin{equation*}
     \iota_{1}(Q_{1}),\iota_{1} (Q_{2})=([ +\mathbf{i}\!:\!1],[ t(1+\mathbf{i})\!:\! 4]),([ -\mathbf{i}\!:\!1],[ t(1-\mathbf{i})\!:\! 4]).\qedhere
\end{equation*}
\end{ex}

\medskip

As Example \ref{ex1} shows, although the kernel curve $\Etproj$ has coefficients in $\mathbb{Q}(d_{i,j},t)$, the poles of $b_1$ or $b_{2}$ may belong to a nontrivial intermediate field extension $\mathbb{Q}(d_{i,j},t) \subset L \subset \C$.  When this is the case, the authors of \cite{DHRS} use a Galois action to derive that at least one pole of $b_{1}$ or $b_{2}$ should be alone in its orbit with respect to $\tau$, allowing them to apply Corollary \ref{cor:prop:caract_hyperalg}. 

\medskip

Let us consider unweighted quadrant walks. Then  \cite[Proposition 5.1]{DHRS} allows us to conclude that when Assumption \ref{assumption} is satisfied, $26$ over the $51$ unweighted quadrant models listed in \cite{KurkRasch} have a differentially transcendental generating function. In the weighted context, we find that almost every choice of the $d_{i,j}$'s leads to a differentially transcendental counting function. 

\subsubsection*{Double pole case}
Assume that $d_{1,1}=0$ and $d_{1,0}d_{0,1}\neq 0$. 
By \cite[Section~5.2.1]{DHRS}, the proof being similar in our situation, the function $b_{2}$ admits at least two double poles, and to deduce that $x\mapsto Q(x,0;t)$ and $y\mapsto Q(0,y;t)$ are differentially transcendental over $\C(x)$ and $\C(y)$, respectively, it suffices to prove that one of the double poles is alone in its orbit with respect to $\tau$. Let us give a criterion ensuring such a result.

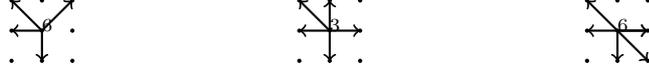
\begin{figure}\bigskip
\begin{tikzpicture}[scale=.4, baseline=(current bounding box.center)]
\foreach \x in {-1,0,1} \foreach \y in {-1,0,1} \fill(\x,\y) circle[radius=2pt];
\draw[thick,->](0,0)--(-1,0);
\draw[thick,->](0,0)--(1,1);
\draw[thick,->](0,0)--(0,-1);
\draw[thick,->](0,0)--(-1,1);
\put(-2,1){{$\small{1/2}$}}
\put(1,1){{$\small{1/6}$}}
\put(-2,-0.2){{$\small{1/6}$}}
\put(-0.5,-1.5){{$\small{1/6}$}}
\end{tikzpicture}
\quad\quad\quad\quad\quad\quad\quad\quad
\begin{tikzpicture}[scale=.4, baseline=(current bounding box.center)]
\foreach \x in {-1,0,1} \foreach \y in {-1,0,1} \fill(\x,\y) circle[radius=2pt];
\draw[thick,->](0,0)--(-1,0);
\draw[thick,->](0,0)--(1,0);
\draw[thick,->](0,0)--(0,-1);
\draw[thick,->](0,0)--(-1,1);
\draw[thick,->](0,0)--(0,1);
\put(-2,1){{$\small{1/6}$}}
\put(1,-0.2){{$\small{1/6}$}}
\put(-2,-0.2){{$\small{1/6}$}}
\put(-0.5,-1.5){{$\small{1/6}$}}
\put(-0.5,1){{$\small{1/3}$}}
\end{tikzpicture}
\quad\quad\quad\quad\quad\quad\quad\quad
\begin{tikzpicture}[scale=.4, baseline=(current bounding box.center)]
\foreach \x in {-1,0,1} \foreach \y in {-1,0,1} \fill(\x,\y) circle[radius=2pt];
\draw[thick,->](0,0)--(-1,0);
\draw[thick,->](0,0)--(1,0);
\draw[thick,->](0,0)--(0,-1);
\draw[thick,->](0,0)--(-1,1);
\draw[thick,->](0,0)--(1,0);
\draw[thick,->](0,0)--(1,-1);
\put(-2,1){{$\small{1/3}$}}
\put(1,-0.2){{$\small{1/6}$}}
\put(1,-1.5){{$\small{1/6}$}}
\put(-2,-0.2){{$\small{1/6}$}}
\put(-0.5,-1.5){{$\small{1/6}$}}
\end{tikzpicture}
\bigskip

\caption{Three models, to which (from left to right) Theorems \ref{thm:generic1}, \ref{thm:generic2} and \ref{thm:generic3} apply}
\label{fig:illustration_theorems_infinite}
\end{figure}

\begin{theo}[\cite{DHRS}, Theorem 5.3]
\label{thm:generic2} 
Assume that $d_{1,1}=0$ and $d_{1,0}d_{0,1}\neq 0$. Assume further that there are no point of $\Etproj (\mathbb{Q}(d_{i,j},t))$ that is fixed by $\iota_{1}$ or $\iota_{2}$. Then $x\mapsto Q(x,0;t)$ and $y\mapsto Q(0,y;t)$ are differentially transcendental over $\C(x)$ and $\C(y)$, respectively.
\end{theo}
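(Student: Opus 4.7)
My plan is to apply Corollary~\ref{cor:prop:caract_hyperalg} to the function $b_{2}=x(y-\iota_{1}(y))$, by exhibiting a pole $P$ of $b_2$ of maximal order $m$ such that no $\tau^{k}(P)$ with $k\neq 0$ is a pole of $b_2$ of order $\geq m$.

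\textbf{Step 1 (locating the poles).} I would first make the poles of $b_{2}$ explicit, following the scheme of Example~\ref{ex1}. The poles of $x$ on $\Etproj$ are the solutions of $x_{1}=0$; under $d_{1,1}=0$, the equation $\overline{K}(1,0,y_{0},y_{1};t)=-ty_{1}(d_{1,-1}y_{1}+d_{1,0}y_{0})=0$ gives $P_{1}=([1\!:\!0],[1\!:\!0])$ and, when $d_{1,-1}\neq 0$, $P_{2}=([1\!:\!0],[-d_{1,0}\!:\!d_{1,-1}])$; symmetrically $Q_{1},Q_{2}$ for $y$. The poles of $b_{2}$ are therefore contained in $\{P_{1},P_{2},Q_{1},Q_{2},\iota_{1}(Q_{1}),\iota_{1}(Q_{2})\}$. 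A local expansion at each of these points (as performed in \cite[Section~5.2]{DHRS}) shows that the hypothesis $d_{1,1}=0$, $d_{1,0}d_{0,1}\neq 0$ forces two of these points to be double poles while the remaining ones are simple, the degeneration coming from the fact that one branch of $y$ at $x=\infty$ becomes of order two when $d_{1,1}=0$.

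\textbf{Step 2 (isolating a double pole in its $\tau$-orbit).} It remains to show that at least one of the two double poles of $b_{2}$, call it $P$, satisfies $\tau^{k}(P)\neq P'$ for every $k\neq 0$ and every other double pole $P'$. This is the arithmetic heart of the argument. The curve $\Etproj$, the involutions $\iota_{1},\iota_{2}$, the map $\tau$ and the function $b_{2}$ are all defined over $k:=\Q(d_{i,j},t)$, so $G:=\Gal(\overline{k}/k)$ permutes the double poles and commutes with $\tau$. I would argue by contradiction: if $\tau^{m}(P)=P'$ with $m\neq 0$, applying any $\sigma\in G$ produces $\tau^{m}(\sigma P)=\sigma P'$. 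Using that the two double poles form a single $G$-orbit as soon as they are not both $k$-rational, and that the fixed points of $\iota_{1}$ (resp.\ $\iota_{2}$) on $\Etproj$ are exactly the four ramification points of the projection on $x$ (resp.\ $y$), one extracts an algebraic identity on the $d_{i,j}$ and $t$ that forces one such ramification point to be $k$-rational, contradicting the assumption.

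\textbf{Main obstacle.} The delicate point will be Step~2, namely the finite case analysis over the possible coincidences $\tau^{m}(P)=P'$ between double poles and their translation into the existence of a $k$-rational fixed point of $\iota_{1}$ or $\iota_{2}$. This is precisely the content of \cite[Theorem~5.3]{DHRS} in the unweighted case, and it transposes verbatim here because the uniformization (Proposition~\ref{prop:uniformization}) and the analytic continuation (Theorem~\ref{theo:analytic_continuation}) hold in the weighted setting without modification; the combinatorial shape of the poles and of the action of $\tau$ only depends on the positions of the branch points of the discriminants, not on the specific weights. Once Steps~1 and~2 are completed, Corollary~\ref{cor:prop:caract_hyperalg} immediately yields the differential transcendence of $x\mapsto Q(x,0;t)$ and $y\mapsto Q(0,y;t)$ over $\C(x)$ and $\C(y)$ respectively.
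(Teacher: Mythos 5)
Your plan — apply Corollary~\ref{cor:prop:caract_hyperalg} to $b_2$ and show that a double pole is alone in its $\widetilde{\tau}$-orbit — matches the paper's, which in fact gives no new proof and defers entirely to \cite{DHRS}, since (as you correctly note) the analytic continuation Theorem~\ref{theo:analytic_continuation} is the only genuinely new ingredient in the weighted setting. But your Step~2 rests on a mechanism that does not apply here. The two double poles of $b_2$ are $P_1=([1\!:\!0],[1\!:\!0])$ (this point is simultaneously a pole of $x$ and of $y$, because $\overline{K}(1,0,1,0;t)=-t\,d_{1,1}=0$; you should record this coincidence $P_1=Q_1$, since it is exactly why the pole is double) and $\iota_1(P_1)=([1\!:\!0],[-d_{1,-1}\!:\!d_{1,0}])$ (your $P_2$ has the two projective coordinates swapped). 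Both are manifestly $\Q(d_{i,j},t)$-rational, so the Galois step you invoke — ``the two double poles form a single $G$-orbit as soon as they are not both $k$-rational'' — is vacuous: the absolute Galois group of $\Q(d_{i,j},t)$ fixes both double poles, and your conjugation argument yields nothing.

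The argument that actually works is purely group-theoretic and needs no Galois action. Suppose $\tau^m(P_1)=\iota_1(P_1)$ for some $m\neq 0$, i.e.\ $\iota_1\tau^m$ fixes $P_1$. The relation $\iota_1\tau\iota_1=\tau^{-1}$ shows that the involution $\iota_1\tau^m$ is conjugate in $\langle\iota_1,\iota_2\rangle$ to $\iota_1$ when $m=2n$ (indeed $\iota_1\tau^{2n}=\tau^{-n}\iota_1\tau^{n}$) and to $\iota_2$ when $m=2n+1$ (indeed $\iota_1\tau^{2n+1}=\tau^{-n-1}\iota_2\tau^{n+1}$). Hence $\tau^{n}(P_1)$ is a fixed point of $\iota_1$, resp.\ $\tau^{n+1}(P_1)$ a fixed point of $\iota_2$; since $P_1$ is $\Q(d_{i,j},t)$-rational and $\tau,\iota_1,\iota_2$ are all defined over $\Q(d_{i,j},t)$, this produces a $\Q(d_{i,j},t)$-rational fixed point of $\iota_1$ or $\iota_2$ on $\Etproj$, contradicting the hypothesis. (The remaining possibility $\tau^m(P_1)=P_1$ with $m\neq 0$ is excluded because $\widetilde{\tau}$ is a fixed-point-free translation by $\omega_3$ of infinite order.) With Step~2 replaced by this argument, your proposal becomes a faithful rendering of the proof in \cite{DHRS}.
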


\begin{ex}
Consider the second model on Figure \ref{fig:illustration_theorems_infinite}; it provides an example of model to which Theorem~\ref{thm:generic2} applies. Assume that $t$ is transcendental. We have to check that there are no point of $\Etproj (\mathbb{Q}(t))$ fixed by $\iota_{1}$ or $\iota_{2}$.
 
If we take the notations of Section \ref{secdisc}, we find that the fixed points of $\iota_{1}$ and $\iota_{2}$ are of the form 
$(X_{\pm}(b_{i}),b_{i})$ and $(a_{i},Y_{\pm }(a_{i}))$. Furthermore, the $a_{i}$'s and $b_{i}$'s are roots of the discriminant. Since $t$ is transcendental over $\mathbb{Q}$, we may identify $\mathbb{Q}(t)$ with the field of rational functions in $t$, and the discriminants become polynomials with coefficients in $\mathbb{Q}(t)$. We now have to prove that they have no root in $\Q(t)$, i.e., that the two following polynomials have no root in $\Q(t)$:
\begin{equation*}
 \big( x_1^2 -  \frac{6}{t} x_0x_1 + x_0^2\big)^2 
 - 4 x_0x_1(d_{-1,1} x_1^2 +2 x_0x_1)\quad \text{and}\quad \big( y_1^2 -  \frac{6}{t} y_0y_1 + 2y_0^2\big)^2  
- 4 y_0y_1 (  y_0y_1 + y_0^2).
\end{equation*}
We begin by the first one. To the contrary, assume the existence of $x_{0},x_{1}\in \Q(t)$ with $(x_{0},x_{1})\neq (0,0)$ that cancel the discriminant. If $x_{1}=0$ then necessarily $x_{0}=0$, and so we may assume that $x_{1}=1$ in the projective coordinates. We have to consider
 \begin{equation}\label{eq4}
  \big(1 -  \frac{6}{t} x_0 + x_0^2\big)^2 
 - 4 x_0(d_{-1,1}  +2 x_0).
 \end{equation}
Obviously, \eqref{eq4} implies that $x_{0}$ is nonzero, and so seen as a rational function of $t$, $x_0$ has no zero. Moreover, $t=0$ may be its only pole, of order at most one. So a rational solution must take the form $x_{0}=a/t$ for some $a\in \Q$. However, replacing $x_0$ by $a/t$ in \eqref{eq4}, we easily notice that no value of $a$ works. This proves that the first discriminant has no root in $\Q(t)$.

Let us now consider the second discriminant. As above let us assume that it has a root in $\Q(t)$, which leads to a root in $\Q(t)$ of 
\begin{equation*}
     \big(1-  \frac{6}{t} y_0 + 2y_0^2\big)^2 - 4 y_0 (  y_0 + y_0^2) .
\end{equation*}
Again, $y_{0}$ has no zero and $t=0$ is its only pole, of order one. So $y_{0}=a/t$ and we find $a= 3$, which however turns out to be impossible.  We then conclude that Theorem~\ref{thm:generic2} applies.
\end{ex}

Let us consider unweighted quadrant walks. Then \cite[Theorem 5.3]{DHRS} concludes that when Assumption \ref{assumption} is satisfied and $t$ is transcendental, $5$ over the $51$ unweighted quadrant models listed in \cite{KurkRasch} have a differentially transcendental generating function.

\subsubsection*{Triple pole case}

Assume that $d_{1,1}=d_{1,0}=0$ and $d_{0,1}\neq 0$. By \cite[Section~5.2.2]{DHRS}, the proof being similar in our situation, the function $b_{2}$ admits exactly one triple pole, and the other poles are at most double. Then, Corollary \ref{cor:prop:caract_hyperalg} applies and we obtain the result below:

\begin{theo}[\cite{DHRS}, Theorem 5.4]
\label{thm:generic3} 
Assume that $d_{1,1}=d_{1,0}=0$ and $d_{0,1}\neq 0$.  Then $x\mapsto Q(x,0;t)$ and $y\mapsto Q(0,y;t)$ are differentially transcendental over $\C(x)$ and $\C(y)$, respectively.
 
Similarly, assume that  $d_{1,1}=d_{0,1}=0$ and $d_{1,0}\neq 0$.  Then $x\mapsto Q(x,0;t)$ and $y\mapsto Q(0,y;t)$ are differentially transcendental over $\C(x)$ and $\C(y)$, respectively.
\end{theo}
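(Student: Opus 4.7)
The plan is to apply Corollary~\ref{cor:prop:caract_hyperalg} to the function $b_2 = x(y-\iota_1(y))$ from \eqref{eq:first_def_b_1_b_2}, by establishing that under the hypothesis $d_{1,1}=d_{1,0}=0$ and $d_{0,1}\neq 0$, the meromorphic function $b_2$ on $\Etproj$ admits a unique pole of order exactly three, while every other pole of $b_2$ has order at most two. Since no $\tau$-iterate of this exceptional triple pole can therefore be a pole of $b_2$ of order $\geq 3$, the criterion of Corollary~\ref{cor:prop:caract_hyperalg} applies with $m=3$.

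First I would compute the poles of $x$ and $y$ on $\Etproj$. Setting $x_1 = 0$ in the homogeneous kernel \eqref{eq:kernelwalk} and using $d_{1,1}=d_{1,0}=0$ gives
\[
\overline{K}(1,0,y_0,y_1;t) = -t\, d_{1,-1}\, y_1^2.
\]
The nondegeneracy assumption (Assumption~\ref{assumption}) forces $d_{1,-1}\neq 0$ (otherwise we would be in Case~\ref{case2} of Proposition~\ref{prop:singcases}), so $y_1=0$ and the two candidate poles of $x$ collapse to the single point $P:=([1\!:\!0],[1\!:\!0])$, at which $x$ has a pole of order two on $\Etproj$. Symmetrically, imposing $y_1=0$ gives $-t\, x_1(d_{-1,1}\,x_1 + d_{0,1}\, x_0)$, so that $y$ has simple poles at $P$ and at a second point $Q_2$, and no other poles.

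Next I would carry out a local analysis. Working in the chart $(u,v) = (1/x,1/y)$ at $P$ and expanding the kernel equation, one finds that $v$ is a local uniformizer $s$ of $\Etproj$ at $P$ and
\[
x = -\tfrac{d_{0,1}}{d_{1,-1}}\, s^{-2}+O(s^{-1}),\qquad y = s^{-1}+O(1).
\]
A direct computation of $\iota_1(y) = A_{-1}(x)/(A_1(x)\,y)$ in the parameter $s$ (using $d_{1,1}=0$ so that $A_1(x)\to d_{0,1}$ and $A_{-1}(x)\sim d_{1,-1} x$) gives $\iota_1(y) = -s^{-1}+O(1)$, whence $y-\iota_1(y) = 2\,s^{-1}+O(1)$ and $b_2 = -\tfrac{2\,d_{0,1}}{d_{1,-1}}\, s^{-3} + O(s^{-2})$ has a pole of order exactly three at $P$. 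At the remaining candidate poles $Q_2$ and $\iota_1(Q_2)$, the coordinate $x$ is finite and a similar (easier) local expansion shows that $b_2$ has order of pole at most two (in fact simple).

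The main obstacle is carrying out the local expansion at the collapsed point $P=([1\!:\!0],[1\!:\!0])$, where both $x$ and $y$ have poles simultaneously: one must move to a suitable affine chart at infinity in $\P1(\C)^2$, verify via Assumption~\ref{assumption} that $P$ is a smooth point of $\Etproj$, choose a local uniformizer, and extract the precise orders of $x$, $y$ and $\iota_1(y)$. Once this pole structure of $b_2$ is established, Corollary~\ref{cor:prop:caract_hyperalg} yields that $y\mapsto Q(0,y;t)$ is differentially transcendental over $\C(y)$, and the functional equation \eqref{eq:funcequaonthecurve2} transfers the conclusion to $x\mapsto Q(x,0;t)$ over $\C(x)$. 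The symmetric assertion under $d_{1,1}=d_{0,1}=0$, $d_{1,0}\neq 0$ follows by applying the same strategy to $b_1 = \iota_1(y)(x-\tau(x))$ with the roles of $x$ and $y$ exchanged.
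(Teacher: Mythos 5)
Your proposal is correct and follows exactly the approach the paper takes (which itself defers to \cite[Section~5.2.2]{DHRS}): one verifies that $b_2$ has a unique triple pole at the collapsed point $P=([1\!:\!0],[1\!:\!0])$ while all other poles are at most double, and then applies Corollary~\ref{cor:prop:caract_hyperalg}. Your local computations at $P$ (and the use of Assumption~\ref{assumption} to guarantee $d_{1,-1}\neq 0$) check out; the only cosmetic point is that Corollary~\ref{cor:prop:caract_hyperalg} already yields differential transcendence of \emph{both} $Q(x,0;t)$ and $Q(0,y;t)$, so the closing step transferring the conclusion via \eqref{eq:funcequaonthecurve2} is not needed.
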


See Figure \ref{fig:illustration_theorems_infinite} for an example of model to which Theorem~\ref{thm:generic3} applies.\\ \par 

Let us consider unweighted quadrant walks. Then \cite[Theorem 5.4]{DHRS} concludes that when Assumption \ref{assumption} is satisfied, $9$ over the $51$ unweighted quadrant models listed in \cite{KurkRasch} have a differentially transcendental generating function. Thus the combination of the three above theorems allows us to conclude that when $t$ is transcendental, $40$ over the $51$ unweighted quadrant models listed in \cite{KurkRasch} have a differentially transcendental generating function. Among the $11$ remaining cases, $9$ are differentially algebraic, see \cite{BBMR16,DHRS}, and $2$ are differentially transcendental, see \cite{DHRS}.

\section{A sufficient condition for algebraicity}
\label{sec:algebraicity}

The aim of the present section is to give sufficient conditions for the algebraicity of the generating functions. Throughout the section it will be assumed that $t\in(0,1)$ is such that the kernel curve is nondegenerate and elliptic (Assumption \ref{assumption}), see Proposition \ref{prop:singcases} and Lemma \ref{lem:doublezero}. Let $\langle \iota_{1},\iota_{2} \rangle$ be the group introduced in Section~\ref{sec:kernel}, see \eqref{eq:generators_group}. In this section we restrict ourselves to the case of a finite group. Extending results of \cite{BMM,FaRa-10,KurkRasch,KuRa-15} to the weighted case, we prove that the generating function $(x,y)\mapsto Q(x,y;t)$ is systematically holonomic, and even algebraic if the orbit-sum
\begin{equation}
\label{eq:orbit-sum_formal}
     \mathcal{O}(x,y)=\sum_{\theta\in\langle \iota_{1},\iota_{2} \rangle}  \textnormal{sign}(\theta)\cdot\theta(xy)
\end{equation}
identically vanishes (in the definition \eqref{eq:orbit-sum_formal} above, $\textnormal{sign}(\theta)=1$ (resp.\ $-1$) if the number of elements $\iota_1,\iota_2$ used to write $\theta\in\langle \iota_{1},\iota_{2} \rangle$ is even (resp.\ odd)). 

\begin{theo}
\label{theo:alg_crit1}
Let $0< t <1$ be such that the group $\langle \iota_{1},\iota_{2} \rangle$ restricted to the kernel curve $\Etproj$ is finite (i.e., $\omega_3/\omega_2=k/\ell\in\mathbb Q$, $\gcd(k,\ell)=1$, see Proposition \ref{prop:uniformization}). Then the function $(x,y)\mapsto Q(x,y;t)$ is holonomic. Moreover, it is algebraic if and only if the orbit-sum $\mathcal{O}(x(\omega),y(\omega))$ is identically zero.
\end{theo}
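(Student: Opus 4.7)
The plan is to adapt the classical orbit-summation method of Bousquet-M\'elou--Mishna \cite{BMM} (going back to Malyshev \cite{FIM}) to the weighted setting developed in this paper. The starting point is the functional equation \eqref{eq:funcequ},
\begin{equation*}
K(x,y;t) Q(x,y;t) = F^1(x;t) + F^2(y;t) + xy - K_0(t),
\end{equation*}
with $K_0(t) := K(0,0;t) Q(0,0;t)$. The involutions $\iota_1, \iota_2$ of \eqref{eq:generators_group} leave $S(x,y)$ invariant, so for every $\theta \in G := \langle \iota_1, \iota_2 \rangle$, writing $(x_\theta, y_\theta) := \theta(x,y)$, we have $K(x_\theta, y_\theta;t) = x_\theta y_\theta (1 - t S(x,y))$. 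Under the standing hypothesis $\omega_3/\omega_2 = k/\ell \in \mathbb{Q}$, the group $G$ is dihedral of order $2\ell$, and the coordinates $x_\theta, y_\theta$ are algebraic over $\mathbb{C}(x,y)$.

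First I would substitute $(x,y) \mapsto \theta(x,y)$ into \eqref{eq:funcequ} for each $\theta \in G$, multiply by $\mathrm{sign}(\theta)$, and sum over $\theta \in G$. Pairing $\theta$ with $\iota_1 \theta$ cancels all $F^1$-contributions (since $\iota_1$ fixes the first coordinate and has sign $-1$), and similarly for $F^2$ with $\iota_2$; the constant $K_0(t)$ contributes zero because $|G|$ is even. What remains is the fundamental orbit-sum identity
\begin{equation*}
(1 - t S(x,y)) \sum_{\theta \in G} \mathrm{sign}(\theta)\, x_\theta y_\theta\, Q(x_\theta, y_\theta; t) = \mathcal{O}(x,y).
\end{equation*}

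The algebraicity direction follows when $\mathcal{O} \equiv 0$: the identity collapses to $xyQ(x,y;t) = -\sum_{\theta \neq \mathrm{id}} \mathrm{sign}(\theta)\, x_\theta y_\theta\, Q(x_\theta, y_\theta; t)$. A direct inspection of \eqref{eq:generators_group} shows that each non-trivial $(x_\theta, y_\theta)$ has a Laurent expansion at $(0,0)$ containing strictly negative powers of $x$ or $y$, so applying the bivariate nonnegative-part projection $[x^{\geq 1} y^{\geq 1}]$ isolates $xyQ(x,y;t)$ on the left and produces, on the right, the projection of an algebraic element of the degree-$2\ell$ Galois extension $\mathbb{C}(x,y)(\theta(x,y):\theta\in G)$; this projection is itself algebraic, whence $Q(x,y;t)$ is algebraic. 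The unconditional holonomy statement is obtained by dividing the orbit identity by $(1 - tS(x,y))$: the right-hand side becomes a rational function, whose bivariate positive part is D-finite by the standard closure properties of holonomic series, and by the same isolation argument that positive part equals $xyQ(x,y;t)$.

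The converse direction --- algebraicity of $Q$ forces $\mathcal{O} \equiv 0$ --- is the main obstacle. I would argue via the uniformization of Section \ref{sec:uniformization}: if $Q$ is algebraic, so are $F^1$ and $F^2$, hence their lifts $r_x(\omega), r_y(\omega)$ are algebraic meromorphic functions on $\mathbb{C}$. Iterating the telescoping identities \eqref{eq:omega_3_per_rx}--\eqref{eq:omega_3_per_ry} exactly $\ell$ times, and using $\ell \omega_3 \in \mathbb{Z}\omega_1 + \mathbb{Z}\omega_2$ together with the $\omega_1$-periodicity from Theorem \ref{theo:analytic_continuation}, one produces a telescoping sum that must vanish identically; this sum is, up to a sign, a specialization of $\mathcal{O}(x(\omega), y(\omega))$. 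The delicate point is to show that algebraicity is genuinely incompatible with a non-zero telescoping sum, and this requires the fine analytic continuation result of Section \ref{sec:analcont}, which is absent from \cite{BMM} and genuinely new in the weighted elliptic case.
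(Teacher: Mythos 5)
Your proposal takes a genuinely different route from the paper --- the classical orbit-summation of \cite{BMM} rather than the Weierstrass uniformization --- but the forward implication (vanishing orbit-sum $\Rightarrow$ algebraicity) as you sketch it is wrong, and the converse is missing its key ingredient.

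On the forward direction: when $\mathcal O\equiv 0$, the orbit identity collapses to $\sum_{\theta}\mathrm{sign}(\theta)\,x_\theta y_\theta Q(x_\theta,y_\theta;t)=0$, and the positive-part extraction becomes vacuous or contradictory, not informative. If, as you assert, each term with $\theta\neq\mathrm{id}$ had zero bivariate positive part, you would conclude $xyQ(x,y;t)=0$, which is absurd; what actually happens in the zero orbit-sum models (Kreweras, reverse Kreweras, double Kreweras, Gessel in the unweighted setting) is precisely that the positive-part extraction \emph{fails}, because the non-identity terms \emph{do} contribute to the positive part once $Q(x_\theta,y_\theta;t)$ is expanded. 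Moreover the right-hand side of your identity contains the unknown $Q$ at the orbit points, so it is not an ``algebraic element of the degree-$2\ell$ Galois extension'': that characterization would only apply if $Q$ were already known to be algebraic. This is why \cite{BMM} needs a completely separate technique (half orbit sums) for the algebraic models. The paper's proof avoids the problem entirely: lifting to the universal cover, $\mathcal O\equiv 0$ means $r_x$ is $\ell\omega_3$-periodic, hence $(\omega_1,k\omega_2)$-elliptic together with the $\omega_1$-periodicity of Theorem~\ref{theo:analytic_continuation}, hence algebraic in $\wp(\omega)$ and so in $x(\omega)$ by Lemma~\ref{lem12}. This is a complex-analytic argument, not a coefficient-extraction one, and it is the crucial point your approach does not replicate.

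On the converse (algebraicity $\Rightarrow$ $\mathcal O\equiv 0$): your sketch correctly identifies that one should telescope \eqref{eq:omega_3_per_rx}--\eqref{eq:omega_3_per_ry} $\ell$ times, but the claim that the resulting orbit-sum ``must vanish identically'' does not follow from algebraicity of $Q$ alone. The paper closes this gap by invoking a result from \cite{DreR} (Proposition~6 there): a meromorphic function on $\C$ that is algebraic over $\C(\wp)$ is necessarily elliptic for \emph{some} sublattice $(\ell_1\omega_1,\ell_2\omega_2)$. Only then does a second telescoping over $\ell\ell_2$ steps force $\ell_2\,\mathcal O_1\equiv 0$. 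Without that intermediate step, you only know $r_x$ is algebraic in $x(\omega)$; you do not get periodicity in a direction transverse to $\omega_1$, which is what kills the orbit-sum. The holonomy part is also under-justified: even if the positive-part isolation is valid for a given weighted step set, verifying that each non-identity term contributes nothing to the positive part is the hard combinatorial content of \cite{BMM} and is checked there model by model; it is not a ``direct inspection'' in the weighted setting with an arbitrary dihedral group of order $2\ell$. The paper sidesteps this too, using the Weierstrass $\zeta$-function and the function $\phi$ in \eqref{eq:def_phi} to split $r_x$ as an algebraic part plus $\mathcal O_1(\omega)\phi(\omega)$, whose derivative is rational in $x(\omega)$, so holonomy follows from closure properties.
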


\begin{coro}
\label{coro:alg_crit2}
If the group $\langle \iota_{1},\iota_{2} \rangle$ of birational transformations of $\mathbb{P}^{1}(\C)^2$ is finite, then for all $0<t<1$, 
 the function $(x,y)\mapsto Q(x,y;t)$ is holonomic. Moreover, if the orbit-sum
\eqref{eq:orbit-sum_formal} is identically zero, then the above generating function is algebraic for all $0<t<1$. 
\end{coro}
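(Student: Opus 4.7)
The plan is to derive Corollary \ref{coro:alg_crit2} as a direct consequence of Theorem \ref{theo:alg_crit1} applied for every $t\in(0,1)$: the goal is to verify that the $t$-independent hypotheses on the birational group $G=\langle\iota_1,\iota_2\rangle$ imply the $t$-dependent hypotheses of Theorem \ref{theo:alg_crit1} for each such $t$. For the holonomy statement, if $G$ is finite as a group of birational transformations of $\mathbb{P}^1(\C)^2$, then since $\Etproj$ is $G$-stable by construction (see Figure \ref{figiota}), the restriction $G\to\Aut(\Etproj)$ is a homomorphism with automatically finite image. By Proposition \ref{prop:uniformization} and Lemma \ref{lem:formula_omega_3} (which ensures $\omega_3\in(0,\omega_2)\subset\R$), this finiteness translates into $\omega_3/\omega_2\in\mathbb{Q}$, precisely the hypothesis of Theorem \ref{theo:alg_crit1}. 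The theorem then yields the holonomy of $(x,y)\mapsto Q(x,y;t)$ for every $t\in(0,1)$.

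For the algebraicity statement, suppose additionally that $\mathcal{O}(x,y)$ vanishes identically as a rational function on $\mathbb{P}^1(\C)^2$. Fix $t\in(0,1)$ and consider the Weierstrass parametrization $\omega\mapsto(x(\omega),y(\omega))$ of Proposition \ref{prop:uniformization}, whose image lies in $\Etproj\subset\mathbb{P}^1(\C)^2$. Substituting yields $\mathcal{O}(x(\omega),y(\omega))\equiv 0$ as a meromorphic function of $\omega$, and applying the algebraicity criterion of Theorem \ref{theo:alg_crit1} gives the desired conclusion.

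The main obstacle I anticipate is matching the formal birational orbit-sum \eqref{eq:orbit-sum_formal} with the orbit-sum over the restricted group $\overline{G}\subset\Aut(\Etproj)$ implicit in Theorem \ref{theo:alg_crit1}. If the restriction map $G\twoheadrightarrow\overline{G}$ has nontrivial kernel $K$, one needs to verify that $K$ consists entirely of even-length words, so that its elements all carry sign $+1$ and the birational orbit-sum restricted to the parametrized curve equals $|K|$ times the $\overline{G}$-orbit-sum. This follows from the observation that any odd-length element has the form $\tau^k\iota_j$, hence lifts to an involution $\omega\mapsto -\omega+c$ on the universal cover; such a map cannot equal the identity modulo $\omega_1\Z+\omega_2\Z$, so no odd-length element lies in $K$. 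Vanishing of the birational orbit-sum therefore forces vanishing of the $\overline{G}$-orbit-sum, closing the argument.
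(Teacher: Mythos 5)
Your overall strategy---reduce to Theorem~\ref{theo:alg_crit1} by checking its $t$-dependent hypotheses for each $t\in(0,1)$---is the same as the paper's, and your handling of the $t$-independent finiteness is fine (the paper simply cites \cite[Section~2]{FaRa-10} for the restriction-to-$\Etproj$ step, whereas you give the elementary homomorphism-image argument, which is just as good). Your discussion of matching the birational orbit-sum with the restricted one is actually \emph{more} explicit than the paper's: the published proof asserts that the conclusion is ``a straightforward consequence'' of Theorem~\ref{theo:alg_crit1} without dwelling on the kernel of the restriction map, and your observation that all elements of that kernel are even-length words (hence carry sign $+1$, so the restricted birational orbit-sum is a positive multiple of the $\overline{G}$-orbit-sum) is a correct and clarifying addition.

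There is, however, one genuine omission. Theorem~\ref{theo:alg_crit1} is stated under the section's standing Assumption~\ref{assumption} (nondegeneracy and ellipticity of the kernel curve) for the value of $t$ in question, while Corollary~\ref{coro:alg_crit2} asserts a conclusion \emph{for all} $t\in(0,1)$. Before invoking Theorem~\ref{theo:alg_crit1} at every $t$, one must know that Assumption~\ref{assumption} is satisfied at every $t$, not merely at the $t$ one starts with. The paper handles this explicitly by invoking Proposition~\ref{coro1}, which gives the dichotomy ``either $\Etproj$ is elliptic for all $t\in(0,1)$, or the steps lie in a half-space configuration of Lemma~\ref{lem:doublezero}''; since the section's hypotheses rule out the latter alternative, Assumption~\ref{assumption} is automatically propagated to every $t$. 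Your argument jumps straight to ``$\Etproj$ is $G$-stable, restriction to $\Aut(\Etproj)$ is finite, hence $\omega_3/\omega_2\in\mathbb{Q}$'' without justifying that $\Etproj$ is elliptic (so that $\omega_1,\omega_2,\omega_3$ even exist) for each $t\in(0,1)$. Inserting an appeal to Proposition~\ref{coro1} at the start of the argument would close this hole.
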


Before proving these results, let us do a series of remarks.

\smallskip

$\bullet$ As an example, Corollary \ref{coro:alg_crit2} applies to the three models of Figure \ref{fig:illustration_theorems_finite}, which admit a group of order $10$, as shown in \cite{KauersYatchak}. The orbit-sum \eqref{eq:orbit-sum_formal} is $0$, see again \cite{KauersYatchak}, from which it follows that these models are algebraic (as functions of $x$ and $y$). This gives another proof of this fact, after the recent proof given in \cite{BBMR16} (note, \cite{BBMR16} proves the algebraicity of $Q(x,y;t)$ in the three variables).

\smallskip

$\bullet$  Theorem \ref{theo:alg_crit1} and Corollary \ref{coro:alg_crit2} apply exactly the same, for walks starting at point $(i,j)$ with probability $p_{i,j}$, $\sum_{i,j} p_{i,j}=1$. In the orbit sum \eqref{eq:orbit-sum_formal}, $xy$ should be replaced by $\sum_{i,j}p_{i,j}x^{i+1}y^{j+1}$, and $b_1$ and $b_2$ in \eqref{eq:first_def_b_1_b_2_bis} become as in Remark \ref{rem2}.

\smallskip

$\bullet$ For all $23$ finite group models of unweighted walks listed in \cite{BMM}, Corollary \ref{coro:alg_crit2} is proved in \cite{BMM} for $22$ out of the $23$ models, while the article \cite{BostanKauersTheCompleteGenerating} concludes the proof for the last model (Gessel's walk). 

Moreover, for certain families of weighted walks, Corollary \ref{coro:alg_crit2} is shown in \cite{KauersYatchak}. Note that these families in \cite{KauersYatchak} completely describe the models having a group of order $4$, $6$ and $8$.

This is actually a refined version of Corollary \ref{coro:alg_crit2} which is proved in \cite{BMM,BostanKauersTheCompleteGenerating,KauersYatchak}, as the holonomy and algebraicity of the generating functions are proved in the three variables $x,y,t$, whereas we show these properties only in the variables $x$ and $y$.

\smallskip

$\bullet$ The above results hold when a certain group is finite, so it is natural to ask how often this property happens to be satisfied. Note a first subtlety: in Theorem \ref{theo:alg_crit1} the group is defined on $\Etproj$, while in Corollary \ref{coro:alg_crit2} it is considered as acting on $\mathbb{P}^{1}(\C)^2$. As obviously $\Etproj$ is strictly included in $\mathbb{P}^{1}(\C)^2$, the second group has an order larger than or equal to that of the first one. A more precise comparison of the two groups may be found in \cite[Section 2]{FaRa-10}.

When the group $\langle \iota_{1},\iota_{2} \rangle$ acts on $\mathbb{P}^{1}(\C)^2$ (Corollary \ref{coro:alg_crit2}), its only known possible orders are $4$, $6$, $8$, $10$ and $+\infty$, see \cite{KauersYatchak}. It is believed that the cardinality of finite groups in the weighted case may be bounded by $10$. On the other hand, restricted to $\Etproj$ (as in Theorem \ref{theo:alg_crit1}), the group can take any order (even and equal to or larger than $4$), see \cite{fayolleRaschel}. 

\smallskip

$\bullet$ From a methodological viewpoint, the proof of Theorem \ref{theo:alg_crit1} is largely inspired by \cite[Section 9]{KuRa-15}. 
\begin{figure}\bigskip
\begin{tikzpicture}[scale=.4, baseline=(current bounding box.center)]
\foreach \x in {-1,0,1} \foreach \y in {-1,0,1} \fill(\x,\y) circle[radius=2pt];
\draw[thick,->](0,0)--(-1,0);
\draw[thick,->](0,0)--(1,1);
\draw[thick,->](0,0)--(0,-1);
\draw[thick,->](0,0)--(-1,1);
\draw[thick,->](0,0)--(1,0);
\draw[thick,->](0,0)--(0,1);
\draw[thick,->](0,0)--(1,-1);
\put(-2,1){{$\small{1/9}$}}
\put(1,1){{$\small{1/9}$}}
\put(1,-0.2){{$\small{2/9}$}}
\put(1,-1.5){{$\small{1/9}$}}
\put(-2,-0.2){{$\small{1/9}$}}
\put(-0.5,-1.5){{$\small{1/9}$}}
\put(-0.5,1){{$\small{2/9}$}}
\end{tikzpicture}
\quad\quad\quad\quad\quad\quad\quad\quad
\begin{tikzpicture}[scale=.4, baseline=(current bounding box.center)]
\foreach \x in {-1,0,1} \foreach \y in {-1,0,1} \fill(\x,\y) circle[radius=2pt];
\draw[thick,->](0,0)--(-1,0);
\draw[thick,->](0,0)--(-1,-1);
\draw[thick,->](0,0)--(0,-1);
\draw[thick,->](0,0)--(-1,1);
\draw[thick,->](0,0)--(1,0);
\draw[thick,->](0,0)--(0,1);
\draw[thick,->](0,0)--(1,-1);
\put(-2,1){{$\small{1/9}$}}
\put(-2,-1.5){{$\small{1/9}$}}
\put(1,-0.2){{$\small{1/9}$}}
\put(1,-1.5){{$\small{1/9}$}}
\put(-2,-0.2){{$\small{2/9}$}}
\put(-0.5,-1.5){{$\small{2/9}$}}
\put(-0.5,1){{$\small{1/9}$}}
\end{tikzpicture}
\quad\quad\quad\quad\quad\quad\quad\quad
\begin{tikzpicture}[scale=.4, baseline=(current bounding box.center)]
\foreach \x in {-1,0,1} \foreach \y in {-1,0,1} \fill(\x,\y) circle[radius=2pt];
\draw[thick,->](0,0)--(-1,0);
\draw[thick,->](0,0)--(-1,-1);
\draw[thick,->](0,0)--(0,-1);
\draw[thick,->](0,0)--(-1,1);
\draw[thick,->](0,0)--(1,0);
\draw[thick,->](0,0)--(0,1);
\draw[thick,->](0,0)--(1,1);
\put(-2,1){{$\small{1/9}$}}
\put(1,1){{$\small{1/9}$}}
\put(1,-0.2){{$\small{1/9}$}}
\put(-2,-1.5){{$\small{1/9}$}}
\put(-2,-0.2){{$\small{2/9}$}}
\put(-0.5,-1.5){{$\small{1/9}$}}
\put(-0.5,1){{$\small{2/9}$}}
\end{tikzpicture}\bigskip
\caption{Three models with a group of order $10$. These models, proposed in \cite{KauersYatchak} and conjectured to be algebraic, were recently solved in \cite{BBMR16}}
\label{fig:illustration_theorems_finite}
\end{figure}
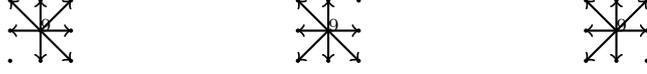

\medskip

Before starting the proof of Theorem \ref{theo:alg_crit1} we need some notation. Let $b_1(\omega),b_2(\omega)$ be the liftings of $b_1,b_2$ in \eqref{eq:first_def_b_1_b_2} on the universal cover of $\Etproj$:
\begin{equation}
\label{eq:first_def_b_1_b_2_bis}
   b_1 (\omega)=y(\omega+\omega_{3})(x(\omega)-x(\omega+\omega_{3})) \quad
     \text{and}\quad
       b_2 (\omega)=x(\omega)(y(\omega)-y(-\omega)).
\end{equation} 
Remind that $\widetilde{\tau}(\omega)=\omega+\omega_{3}$ with $\omega_{3}\in (0,\omega_{2})$, see Lemma \ref{lem:formula_omega_3}. By \eqref{eq:omega_3_per_ry} and \eqref{eq:omega_3_per_rx} one has
\begin{equation}
\label{eq:tau_rx_ry}
     \widetilde{\tau}(r_x)-r_x=b_1\quad\text{and}\quad\widetilde{\tau}(r_y)-r_y=b_2.
\end{equation}
Finally we introduce
\begin{equation}
\label{eq:orbit-sums_omega}
     \mathcal O_1(\omega)=b_1(\omega)+\widetilde{\tau}(b_1(\omega))+\cdots+\widetilde{\tau}^{\ell-1}(b_1(\omega)),
     \quad
     \mathcal O_2(\omega)=b_2(\omega)+\widetilde{\tau}(b_2(\omega))+\cdots+\widetilde{\tau}^{\ell-1}(b_2(\omega)),
\end{equation}
and as we will see below, we have, with $\mathcal O$ as in \eqref{eq:orbit-sum_formal}, 
\begin{equation*}
     \mathcal O_2(\omega)=-\mathcal O_1(\omega)=\mathcal O(x(\omega),y(\omega)).
\end{equation*}
As sums of $(\omega_1,\omega_2)$-elliptic functions, $\mathcal O_1$ and $\mathcal O_2$ are $(\omega_1,\omega_2)$-elliptic.

\begin{proof}[Proof of Theorem \ref{theo:alg_crit1}]
Remind that $\ell$ is defined by $\omega_3/\omega_2=k/\ell\in\mathbb Q$. For $0\leq n\leq \ell-1$, apply $\widetilde\tau^{n}$ to Equation \eqref{eq:tau_rx_ry} and sum these $\ell$ identities. We easily obtain 
\begin{equation}
\label{eq:O_1-O_2}
     \widetilde{\tau}^{\ell} (\rx(\omega;t))  - \rx(\omega;t)=\mathcal O_1(\omega)\quad\text{and}\quad
     \widetilde{\tau}^{\ell} (\ry(\omega;t))  - \ry(\omega;t)=\mathcal O_2(\omega).
\end{equation}
We deduce that for every $(\omega_1,\omega_2)$-elliptic function $\widetilde{\tau}^{\ell}f(\omega)=f(\omega+\ell\omega_{3})=f(\omega+k\omega_{2})=f(\omega)$, showing that $\widetilde{\tau}^{\ell}$ restricted to the field of $(\omega_1,\omega_2)$-elliptic functions is equal to the identity. Consequently $\widetilde{\tau}^{\ell}(x(\omega)y(\omega))=x(\omega)y(\omega)$. As we may see in the proof of Theorem \ref{theo:analytic_continuation}, we have  $b_{1}(\omega)+b_{2}(\omega)=\widetilde{\tau}(x(\omega)y(\omega))-x(\omega)y(\omega)$. We conclude from \eqref{eq:orbit-sums_omega} that $\mathcal O_1 (\omega)=-\mathcal O_2 (\omega)$.

Let us first assume that the orbit-sum $\mathcal O_1 (\omega)$ identically vanishes. Let us rewrite the first identity of \eqref{eq:O_1-O_2} as 
\begin{equation*}
     \rx(\omega+\ell\omega_3;t)- \rx(\omega;t)=0, 
\end{equation*}
which reads that $\rx$ is $\ell\omega_3$-periodic. Being in addition $\omega_1$-periodic by \eqref{eq:omega_1_per_rx}, we deduce that $\rx$ is $(\omega_1,\ell\omega_3)$-elliptic, and therefore $(\omega_1,k\omega_2)$-elliptic since $\omega_3/\omega_2=k/\ell$.
 Using Lemma~\ref{lem12} below, we obtain that  $\rx(\omega;t)$ is an algebraic function of $x(\omega)$.
For the exact same reasons, $\ry(\omega;t)$ is an algebraic function of $y(\omega)$. This shows that $x\mapsto Q(x,0;t)$ and $y\mapsto Q(0,y;t)$ are algebraic. We conclude with the main functional equation \eqref{eq:funcequ} that $(x,y)\mapsto Q(x,y;t)$ is algebraic.
 
Conversely, assume that the function $(x,y)\mapsto Q(x,y;t)$ is algebraic. Then $\rx(\omega;t)$ (resp.\ $\ry(\omega;t)$) is algebraic in $x(\omega)$ (resp.\ $y(\omega)$) and $\rx(\omega;t)$ is algebraic in $\wp(\omega)$, due to Proposition \ref{prop:uniformization}. By \cite[Proposition 6]{DreR}, there exist $\ell_{1},\ell_{2}\in \N^{*}$ such that $\rx(\omega;t)$ is $(\ell_{1}\omega_{1},\ell_{2}\omega_{2})$-elliptic. Note that due to Theorem \ref{theo:analytic_continuation} ($\omega_1$-periodicity, see \eqref{eq:omega_1_per_rx}), we may take $\ell_{1}=1$. We have ${\rx(\omega+k \ell_{2}\omega_2;t)- \rx(\omega;t)=0}$, and therefore $\rx(\omega+\ell\ell_{2}\omega_3;t)- \rx(\omega;t)=0$. For $0\leq n\leq \ell\ell_{2}-1$, apply $\widetilde\tau^{n}$ to Equations \eqref{eq:omega_3_per_ry} and \eqref{eq:omega_3_per_rx} and make the sum of these $\ell\ell_{2}$ identities. We easily obtain that 
$$b_1(\omega)+\widetilde{\tau}(b_1(\omega))+\cdots+\widetilde{\tau}^{\ell\ell_{2}-1}(b_1(\omega))=0.$$
Using $\widetilde{\tau}^{\ell}(b_1(\omega))=b_1(\omega)$, we deduce that 
$$
0=b_1(\omega)+\widetilde{\tau}(b_1(\omega))+\cdots+\widetilde{\tau}^{\ell\ell_{2}-1}(b_1(\omega))=\ell_{2}\big(b_1(\omega)+\widetilde{\tau}(b_1(\omega))+\cdots+\widetilde{\tau}^{\ell-1}(b_1(\omega))\big)=\ell_{2}\mathcal O_1 (\omega).
$$
Then $\mathcal{O}_1(\omega)=0$. We conclude with $\mathcal O_1 (\omega)=-\mathcal O_2 (\omega)$ that $\mathcal O_2 (\omega)=0$.
\par 

We now assume that the orbit-sum $\mathcal O_1 (\omega)$ is nonzero and want to prove the holonomy. 
Set $\Omega_1=\omega_1$, $\Omega_2=k\omega_2$ and consider 
 $\zeta(\omega;\Omega_1,\Omega_2)$ that is meromorphic on the complex plane and is the (opposite of the) antiderivative of the Weierstrass $\wp$-function: $\zeta'=-\wp$, coupled with the condition $\lim_{\omega\to 0} \zeta (\omega;\Omega_1,\Omega_2)-\frac{1}{\omega}=0$, see \cite[20.4]{WW}:
\begin{equation*}
     \zeta(\omega;\Omega_1,\Omega_2):=\frac{1}{\omega}+ \sum_{(\ell_{1},\ell_{2}) \in \Z^{2}\setminus \{(0,0)\}} \left(\frac{1}{\omega +\ell_{1}\Omega_{1}+\ell_{2}\Omega_{2}} -\frac{1}{\ell_{1}\Omega_{1}+\ell_{2}\Omega_{2}}+\frac{\omega}{(\ell_{1}\Omega_{1}+\ell_{2}\Omega_{2})^{2}}\right).
\end{equation*}   
As in \cite[Equation (4.3.7)]{FIM}, the key idea is to introduce 
\begin{equation}
\label{eq:def_phi}
     \phi(\omega) = \frac{\Omega_1}{2i\pi}\zeta(\omega;\Omega_1,\Omega_2)-\frac{\omega}{i\pi}\zeta(\Omega_1/2;\Omega_1,\Omega_2)
\end{equation} 
and to prove that
\begin{equation}
\label{eq:prop_phi}
     \phi(\omega+\Omega_1)=\phi(\omega) \quad \text{and}\quad \phi(\omega+\Omega_2)=\phi(\omega)+1.
\end{equation}
 Using the quasi-periodicity property of $\zeta$, that is for $i\in\{1,2\}$,
\begin{equation*}
     \zeta(\omega+\Omega_{i};\Omega_1,\Omega_2)-\zeta(\omega;\Omega_1,\Omega_2)=2\zeta({\Omega_i}/{2};\Omega_1,\Omega_2),
\end{equation*}
see \cite[20.41]{WW}, we obtain that $\phi$ is $\Omega_1$-periodic (first identity in \eqref{eq:prop_phi}). Using further the relation
\begin{equation*}
     \zeta({\Omega_1}/{2};\Omega_1,\Omega_2)\Omega_2-\zeta({\Omega_2}/{2};\Omega_1,\Omega_2)\Omega_1=-\pi i,
\end{equation*}
see \cite[20.411]{WW} (the minus sign in front of $\pi i$ differs from \cite{WW}, as the role of $\Omega_1$ and $\Omega_2$ is reversed here), we deduce that the function $\phi$ satisfies $\phi(\omega+k\omega_2)=\phi(\omega)+1$ (second identity in \eqref{eq:prop_phi}).

With the help of the property \eqref{eq:prop_phi} satisfied by $\phi$ and the $(\omega_1,\omega_2)$-ellipticity of $\mathcal O_1$, we may rewrite the first identity in \eqref{eq:O_1-O_2} as the fact that the function
\begin{equation}
\label{eq:function_difference}
     \psi (\omega)=\rx (\omega)-\mathcal O_1(\omega)\phi (\omega)
\end{equation}
is $k\omega_2$-periodic:
\begin{equation*}
     \psi(\omega+k\omega_2)=\psi(\omega).
\end{equation*} 
With the $\omega_{1}$-periodicity, we find using Lemma \ref{lem12} that $\psi$ is algebraic in the variable $x(\omega)$. 

Writing \eqref{eq:function_difference} under the form $\rx (\omega)=\psi (\omega)+\mathcal O_1 (\omega)\phi (\omega)$, one sees that $\rx (\omega)$ is the sum of an algebraic function in $x(\omega)$ and the function $\mathcal O_1(\omega)\phi(\omega)$.  We claim that $\mathcal O_1(\omega)\phi(\omega)$ is holonomic in the variable $x(\omega)$. First, since it is $(\omega_1,\omega_2)$-elliptic, $\mathcal O_1 (\omega)$ is algebraic in $x(\omega)$. Second, $\phi' (\omega)$ is a rational function of $x(\omega)$, as $\zeta' (\omega)=-\wp (\omega)$. So $\mathcal O_1(\omega)\phi(\omega)$ is holonomic in the variable $x(\omega)$, proving the claim. We conclude using closure properties of holonomic functions.
\end{proof}

\begin{proof}[Proof of Corollary \ref{coro:alg_crit2}]
If the group $\langle \iota_{1},\iota_{2} \rangle$ of birational transformation of $\mathbb{P}^{1}(\C)^2$ is finite, then for any $0< t <1$, the group $\langle \iota_{1},\iota_{2} \rangle$ restricted to the kernel curve $\Etproj$ is finite too, see \cite[Section 2]{FaRa-10}. By Proposition \ref{coro1}, Assumption \ref{assumption} is satisfied for every $t\in (0,1)$. The proof of Corollary \ref{coro:alg_crit2} is then a straightforward consequence of Theorem~\ref{theo:alg_crit1}.
\end{proof}

\begin{lemma}\label{lem12}
Let $f(\omega)$ be a $(\omega_1, k\omega_2)$-elliptic function. Then $f(\omega)$ is algebraic in $x(\omega)$.
\end{lemma}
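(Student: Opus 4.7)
The plan is to identify the field of $(\omega_1, k\omega_2)$-elliptic meromorphic functions on $\mathbb{C}$ with the field $\mathcal{M}(T)$ of meromorphic functions on the compact Riemann surface $T := \mathbb{C}/(\mathbb{Z}\omega_1 + \mathbb{Z}k\omega_2)$, and then invoke the standard principle that $\mathcal{M}(T)$ is an algebraic extension of $\mathbb{C}(g)$ for any nonconstant $g \in \mathcal{M}(T)$.

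First, I would observe that both $f$ and $x(\omega)$ descend to meromorphic functions on $T$. For $f$ this is immediate from the hypothesis. For $x(\omega)$, Proposition~\ref{prop:uniformization} exhibits $x(\omega)$ as a fractional linear transformation of $\wp(\omega;\omega_1,\omega_2)$, so $x(\omega)$ is $(\omega_1,\omega_2)$-elliptic; since $\mathbb{Z}\omega_1 + \mathbb{Z}k\omega_2$ is a sublattice of $\mathbb{Z}\omega_1 + \mathbb{Z}\omega_2$ of index $k$, it follows that $x(\omega)$ is also $(\omega_1, k\omega_2)$-elliptic, hence lies in $\mathcal{M}(T)$.

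Second, since $T$ is a compact Riemann surface of complex dimension one, it is classical that $\mathcal{M}(T)$ has transcendence degree one over $\mathbb{C}$: any two meromorphic functions on $T$ are algebraically dependent. Because $x(\omega)$ is nonconstant (indeed surjects onto $\mathbb{P}^1(\mathbb{C})$ as $\omega$ ranges over $T$), it is transcendental over $\mathbb{C}$, and so any other element of $\mathcal{M}(T)$ must be algebraic over $\mathbb{C}(x(\omega))$. Applying this to $f$ concludes the proof.

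There is essentially no obstacle in this argument, since it only relies on standard facts about compact Riemann surfaces; if one prefers an explicit bound, one can note that the tower $\mathbb{C}(x(\omega)) \subset \mathbb{C}(\wp,\wp') \subset \mathcal{M}(T)$ has degrees $2$ and $k$ respectively (the first from the Weierstrass relation, the second from the index of the sublattice), giving $[\mathcal{M}(T) : \mathbb{C}(x(\omega))] = 2k$.
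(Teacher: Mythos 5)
Your proof is correct and takes essentially the same approach as the paper: both arguments rest on the classical fact that any two elliptic functions for the same lattice are algebraically dependent (the paper cites Whittaker--Watson~20.54, you phrase it as $\mathcal{M}(T)$ having transcendence degree one over~$\C$). The only cosmetic difference is that the paper first shows $f$ is algebraic over $\C(\wp)$ and then passes to $x$ via the fractional linear change of variable from Proposition~\ref{prop:uniformization}, whereas you treat $f$ and $x$ directly as elements of $\mathcal{M}(T)$; your added remark on the degree $[\mathcal{M}(T):\C(x(\omega))]=2k$ is a nice bonus and is correct.
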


\begin{proof}
The functions $f(\omega)$ and $\wp (\omega)$ are  $(\omega_1, k\omega_2)$-elliptic. Using a well-known property of elliptic functions, there must exist a nonzero polynomial $P\in \mathbb C[X,Y]$ such that $P(f,\wp)=0$, see \cite[20.54]{WW}. Thus $f(\omega)$ is algebraic in $\wp(\omega)$, and hence also in $x(\omega)$ due to Proposition \ref{prop:uniformization}. 
\end{proof}

\bibliography{walkbib}

\end{document}